 \tikzset{>=latex}
\pgfplotsset{compat=1.10}
\newtheorem{theorem}{Theorem}[section]
\newtheorem{lemma}[theorem]{Lemma}
\newtheorem{proposition}[theorem]{Proposition}
\newtheorem{corollary}[theorem]{Corollary}
\theoremstyle{definition}
\newtheorem{definition}[theorem]{Definition}
\newtheorem{example}[theorem]{Example}
\theoremstyle{remark}
\newtheorem{remark}[theorem]{Remark}
\newcommand{\abs}[1]{\lvert#1\rvert}
\newcommand{\C}{{\mathbb{C}}}
\newcommand{\R}{{\mathbb{R}}}
\newcommand{\Z}{{\mathbb{Z}}}
\newcommand{\N}{{\mathbb{N}}}
\renewcommand{\epsilon}{\varepsilon}
\renewcommand{\theta}{\vartheta}
\newcommand{\relmiddle}[1]{\mathrel{}\middle#1\mathrel{}}
\begin{document}

\title[]{On invariants of families of    lemniscate motions in the two-center problem}

 \author[H. H\"au{\ss}ler]{Hanna H\"au{\ss}ler}
   \address{Lehrstuhl f\"ur Analysis und Geometrie, Universit\"at Augsburg, D-86152 Augsburg}
   \email {hanna.haeussler@uni-a.de}

 \author[S. Kim]{Seongchan Kim}
   \address{ Department of Mathematics Education, Kongju National University, Gongju 32588, Republic of Korea}
   \email {seongchankim@kongju.ac.kr}

\setcounter{tocdepth}{3}

\date{Recent modification; \today}
 \begin{abstract}
 
We determine four topological invariants introduced by Cieliebak-Frauenfelder-Zhao \cite{CFZ23}, based on Arnold's $J^+$-invariant, of periodic lemniscate motions in Euler's two-center problem.

  \end{abstract}
   \maketitle

 %  \tableofcontents
   
 %%%%%%%%%%%%%%%%%%%%%%%%%%%%%%%%%%%%%%%%%%%%%%%%%%%%%%%%%%%%%%%%%%%%%%%%%%%%%%%%%%%%%%%%%%%%%%%%%%%%%%%%% %%%%%%%%%%%%%%%%%%%%%%%%%%%%%%%%%%%%%%%%%%%%%%%%%%%%%%%%%%%%%%%%%%%%%%%%%%%%%%%%%%%%%%%%%%%%%%%%%%%%%%%%%%%%%%%%%%%%%%%%%%%%%%%%%%%%%%%%%%%%%%%%%%%%%%%%%%%%%%%%%%%%%%%%%%%%%%%%%%%%%%%%%%%%%%%%%%%%%%%%%%%%%%%%%%%%%%%%%%%%%%%%%%%%%%%%%%%%%%%%%%%%%%%%%%%%%%%%%%%%%%%%%%%%%%%%%%%%%%%%%%%%%%%%%%%%%%%%%%%%%%%%%%%%%%%%%%%%%%%%%%%%%%%%%%%%%%%%%%%%%

 %%%%%%%%%%%%%%%%%%%%%%%%%%%%%%%%%%%%%%%%%%%%%%%%%%%%%%%%%%%%%%%%%%%%%%%%%%%%%%%%%%%%%%%%%%%%%%%%%%%%%%%%% %%%%%%%%%%%%%%%%%%%%%%%%%%%%%%%%%%%%%%%%%%%%%%%%%%%%%%%%%%%%%%%%%%%%%%%%%%%%%%%%%%%%%%%%%%%%%%%%%%%%%%%%%%%%%%%%%%%%%%%%%%%%%%%%%%%%%%%%%%%%%%%%%%%%%%%%%%%%%%%%%%%%%%%%%%%%%%%%%%%%%%%%%%%%%%%%%%%%%%%%%%%%%%%%%%%%%%%%%%%%%%%%%%%%%%%%%%%%%%%%%%%%%%%%%%%%%%%%%%%%%%%%%%%%%%%%%%%%%%%%%%%%%%%%%%%%%%%%%%%%%%%%%%%%%%%%%%%%%%%%%%%%%%%%%%%%%%%%%%%%%

\section{Introduction}

The planar circular restricted three-body problem (PCR3BP) studies the dynamics of a massless body, attracted by two massive bodies according to Newton's law of gravitation. This problem is not only of theoretical interest, but also of considerable practical importance, in view of its profound relevance to space missions and astronomy.

According to Poincar\'e, ``periodic orbits are the only opening through which one can try to penetrate in a place which was supposed to be inaccessible" \cite{Poin1}. They have been sought ever since, yet it is notoriously difficult to find periodic orbits in the PCR3BP by a direct method.  In recent years, artificial intelligence (AI) has been employed to search for new periodic orbits. See, for instance, \cite{AI24, AI1}. While such approaches are an effective way to generate a large number of candidate solutions, they are of limited value without a rigorous understanding of the underlying mathematical structure. Different AI models may identify different periodic orbits, but in the absence of mathematical theory that can interpret these orbits, one is left with an overwhelming amount of data that is difficult to handle in a meaningful way. This highlights the fundamental importance of understanding the structure of a given system. One natural approach toward this goal is to look at families of periodic orbits emanating from an obvious periodic orbit, such as a critical point.

Based on Arnold's $J^+$-invariant for closed planar immersions, Cieliebak, Frauenfelder and van Koert introduced in \cite{CFvK17} two topological invariants $\mathcal{J}_1$ and $\mathcal{J}_2$ for families of periodic orbits in a class of Hamiltonian systems admitting a single-center, which includes the PCR3BP for energies below the first critical value.   These invariants provide valuable insights into the deformation properties of families of periodic orbits, allowing us to determine whether such families can be continuously deformed into one another. This, in turn, improves our understanding of how different families might be interconnected.   It was proved in \cite{Kimcorr, KimSZ} that if periodic orbits in the rotating Kepler problem and Euler's two-center problem, which are special cases of the PCR3BP, share the same knot type, then they also possess the same $\mathcal{J}_1$ and $\mathcal{J}_2$. This suggests that, despite the apparent differences between the two problems, there may exist an intrinsic relation between them.

For energies between the first and second critical values in the PCR3BP, the invariants $\mathcal{J}_1$ and $\mathcal{J}_2$ are not applicable, as the massless body can move back and forth between the two primaries. This limitation led Cieliebak, Frauenfelder and Zhao \cite{CFZ23} to introduce a new class of Hamiltonian systems, called two-center Stark-Zeeman systems, and to define four invariants $\mathcal{J}_0, \mathcal{J}_E, \mathcal{J}_M$ and $\left( \mathcal{J}_{E,M}, n \right)$, all of which are based on the $J^+$-invariant.  
The primary goal of this article is to determine these invariants for periodic orbits in Euler's two-center problem.  Even though the Euler problem does not arise as a limit of the PCR3BP, it is homotopic to the PCR3BP through two-center Stark-Zeeman systems by switching on angular momentum. It is worth noting that the rotating Kepler problem does not fall under the class of two-center Stark-Zeeman systems.

 Euler's two-center problem concerns the motion of a massless body subjected to the gravitational attraction of two fixed massive bodies, commonly referred to as primaries.  In this article, we designate the massless body as the satellite and the two primaries as the Earth and the Moon. This problem was first studied by Euler \cite{Euler1767} as a special case of the PCR3BP.  In sharp contrast to the rotating Kepler problem, the Euler  problem is not invariant under rotations: nevertheless,  it is a striking fact that the system is  completely integrable.  See below.

 We normalize the position of the two primaries to $E=(-1,0)$ and $M=(1,0)$  and scale the total mass to be $1$. Denoting by $\mu \in (0,1)$ the mass of $M$, the Hamiltonian of the Euler problem is given as
\begin{equation}\label{JHam:EUler}
H (q,p) = \frac{1}{2}\lvert p \rvert^2 - \frac{1-\mu}{\lvert q - E \rvert} - \frac{\mu}{\lvert q-M \rvert}, \quad (q,p) \in T^* \left( \R^2 \setminus \{ E, M \} \right).
\end{equation}

Throughout the article, we restrict our attention to negative energies $H=c<0$, ensuring that all motions are bounded. For such energies, the trajectories are classified into four types -- $P,L,S$, and $S'$ -- when $\mu\neq\frac{1}{2}$ and into three types -- $P,L$, and $S$ -- when $\mu=\frac{1}{2}$.

\begin{itemize}

    \item Type $P$ trajectories, referred to as planetary motions, describe orbits confined between two ellipses, circling around both $E$ and $M$.

\item Type $L$ trajectories, referred to as lemniscate motions, are confined to a single ellipse, allowing the satellite to pass between $E$ and $M$.

\item Type $S$ and $S'$ trajectories, referred to as satellite motions, are restricted to a neighborhood of either primary. 
    
\end{itemize}
 Note that $P$- and $L$-type orbits exist only for energies exceeding a unique critical value $c_J = -\frac{1}{2} - \sqrt{\mu - \mu^2}$. See Section \ref{Sec:lemni} for further details.

We now introduce the elliptic coordinates $(\lambda, \nu) \in \R \times S^1[-\pi,\pi] $ 
\begin{equation}\label{eq:ellipticco}
q_1 = \cosh \lambda \cos \nu, \quad q_2 = \sinh \lambda \sin \nu
\end{equation}
and the momenta $p_\lambda$ and $p_\nu$ are determined by the   relation $p_1dq_1+p_2dq_2 = p_\lambda d\lambda + p_\nu d \nu$. The Hamiltonian in the elliptic coordinates then becomes 
\[
H  = \frac{1}{\cosh^2 \lambda - \cos^2 \nu}\left( H_\lambda + H_\nu \right),
\]
where  
\[
H_\lambda = \frac{1}{2}p_\lambda^2 -  \cosh\lambda, \; \; \quad H_\nu = \frac{1}{2}p_\nu^2 + (1 - 2\mu)\cos\nu.
\]

 For each negative energy $H=c<0$, the energy level $H^{-1}(c)$ is noncompact due to the presence of collisions with the primaries. In order to regularize these collisions, we now introduce the new Hamiltonian 
 \begin{equation}\label{eq:FofEuler}
 F_{\mu, c}:= (H  - c)(\cosh^2\lambda - \cos^2\nu) =F_\lambda + F_\nu,
 \end{equation}
where $F_\lambda = H_\lambda - c \cosh^2 \lambda$ and $F_\mu = H_\mu + c \cos^2 \nu$. This Hamiltonian is decoupled, and hence $F_\lambda$ and $F_\mu$ are first integrals of motion. Therefore, the flow of $F_{\mu,c}$ satisfies $\varphi_{F_{\mu,c}}^t = \varphi_{F_\lambda}^t \circ \varphi_{F_\nu}^t$, where $\varphi_Q^t$ indicates the Hamiltonian flow generated by $Q$.  Hence, a trajectory  is periodic if and only if the
rotation number $R = T_\nu / T_{\lambda}$ is rational, where $T_{\lambda}$ and $T_{\nu}$ denote the minimal periods of the motions in  $\lambda$ and $\nu$, respectively.  %The rotation number depends only on the values $(G,H)=(g,c)$.
Given relatively prime positive integers $k$ and $l, $ we refer to a Liouville torus with $ R = k/l$ as a $T_{k, l}$-torus, and periodic orbits lying on such a torus as $T_{k,l}$-type orbits.

\begin{remark}
It is well-known that along lemniscate motions, the rotation number $R$  can attain any positive value in the interval $(0,+\infty)$. See, for instance, \cite[Section 3]{Kim18} or \cite[Section 3]{WDR04}.
\end{remark}

  The main result of this article is the computation of the four Cieliebak-Frauenfelder-Zhao invariants of periodic lemniscate motions.

 \begin{theorem}\label{thmmain} For  a $T_{k,l}$-type lemniscate motion, the Cieliebak-Frauenfelder-Zhao invariants are given by
 \begin{align*}
 \mathcal{J}_0  &= kl - k +1,\\
 \mathcal{J}_E    &= \mathcal{J}_M    = \begin{cases} \frac{ kl}{2} - k+1 & \text{if $l$ is even,}  \\   2kl-2k+1 & \text{if $l$ is odd,}\\ \end{cases}\\
(\mathcal{J}_{E,M}  , n ) &= \left( 1 -k + kl - l^2  ({\rm mod }  \; 2l ),  l \right).
\end{align*}
 \end{theorem}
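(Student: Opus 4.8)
The plan is to realize each of the four quantities as a $J^{+}$-type count of the immersed curve that the orbit traces in the appropriate (possibly regularized) configuration plane, following the prescription of \cite{CFZ23}, and then to compute these counts directly from the explicit, decoupled description of the orbit. A useful first reduction concerns the second line of the theorem: the reflection $q_{1}\mapsto -q_{1}$ corresponds in elliptic coordinates to $\nu\mapsto\pi-\nu$, interchanges the two primaries $E$ and $M$, and sends the mass parameter $\mu$ to $1-\mu$. Since it carries the Levi-Civita regularization at $E$ to that at $M$ while preserving the $J^{+}$-count, it identifies $\mathcal{J}_{E}$ for mass $\mu$ with $\mathcal{J}_{M}$ for mass $1-\mu$; together with the fact that the final answer turns out to be independent of $\mu$, this yields $\mathcal{J}_{E}=\mathcal{J}_{M}$ and reduces the middle computation to a single case.

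First I would put the curve in normal form. Because $F_{\mu,c}$ decouples and the flow factors as $\varphi^{t}_{F_{\lambda}}\circ\varphi^{t}_{F_{\nu}}$, a $T_{k,l}$-lemniscate orbit is an explicit ``elliptic Lissajous'' figure: over the minimal period the coordinate $\lambda$ librates $k$ times across the interfocal segment $\{\lambda=0\}$ while $\nu$ circulates $l$ times around $S^{1}$, and the configuration curve is $(q_{1},q_{2})=(\cosh\lambda\cos\nu,\ \sinh\lambda\sin\nu)$. I would first treat the base case $T_{1,1}$, where this curve is the figure-eight through the origin with a single self-crossing, so that $\mathcal{J}_{0}=1$, fixing the additive normalization. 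For general $(k,l)$ I would determine the Whitney index of the curve and enumerate its self-crossings with signs --- those occurring at the origin (where $\lambda=0$ and $\nu=\pm\tfrac{\pi}{2}$ coincide) and the remaining transverse crossings created as $\nu$ winds --- and read off $\mathcal{J}_{0}$ from the standard formula for $J^{+}$ in terms of the signed double-point count and the Whitney index. The target to be matched is $\mathcal{J}_{0}=k(l-1)+1$.

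Next, for $\mathcal{J}_{E}$ (hence $\mathcal{J}_{M}$) I would pass to the Levi-Civita regularization $q-E=z^{2}$, a double cover of the plane branched at $E$, and compute $J^{+}$ of the lifted curve. The qualitative behaviour of the lift is governed by the parity of the winding number of the configuration curve about $E$, which I expect to have the parity of $l$ (for the figure-eight $l=1$ this winding is $\pm1$). When $l$ is odd the lift is a single curve that double-covers the original, so each of the $k(l-1)$ crossings downstairs produces two crossings upstairs and one obtains $2k(l-1)+1=2kl-2k+1$; when $l$ is even the lift splits into two components and, passing to the relevant one, a reduced count of $\tfrac{kl}{2}-k+1$ survives. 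The work here is to track exactly which crossings lift to the chosen component and how the Whitney index transforms, which accounts for the precise constants in the two parity cases.

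Finally, for $(\mathcal{J}_{E,M},n)$ I would use the simultaneous two-center regularization, branched at both $E$ and $M$; here $n$ is the winding multiplicity of the regularized orbit, which I expect to equal $l$, and the first entry is the corresponding $J^{+}$-count, which is only well defined modulo $2n=2l$ because of the residual ambiguity of the cover --- hence the answer is recorded as a residue. Matching $1-k+kl-l^{2}\ (\mathrm{mod}\ 2l)$ is then a bookkeeping computation once the lifted curve is in hand. I expect the main obstacle throughout to be exactly this bookkeeping: correctly enumerating the self-crossings and fixing their signs, and then controlling how both the crossing set and the Whitney index behave under the branched regularizations --- most delicately in the two-center case, where the cover is branched at two points and the sign and normalization conventions for $J^{+}$ must be handled with particular care.
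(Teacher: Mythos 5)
Your overall architecture (evaluate the invariants on explicit orbits, pass to the Levi-Civita and Birkhoff covers, count crossings) points in the right direction, but the proposal hinges on a tool that does not exist: there is no ``standard formula for $J^+$ in terms of the signed double-point count and the Whitney index.'' Arnold's $J^+$ is not a function of these two data --- for instance, the standard curve $K_3$ (circle with two interior loops) and the closed $3$-fold spiral both have Whitney index $3$ and two double points, yet Viro's formula gives $J^+=-4$ for the former and $J^+=-6$ for the latter. Computing $J^+$ of a concrete immersion requires either the full face data entering Viro's formula (\cite{Viro96}), i.e.\ the winding numbers of \emph{all} complementary components and the indices of all double points, or an explicit generic homotopy to a standard curve; for a generic $T_{k,l}$ Lissajous-type orbit this combinatorial data is precisely the hard part, and your plan gives no way to obtain it. The paper avoids this issue with a reduction you do not make: since the four invariants are constant along two-center Stark homotopies and every $T_{k,l}$-torus carries collision-collision orbits (Lemma \ref{lem:Lregion}), one evaluates the invariants on those degenerate orbits, for which $\mathcal{J}_0=2N+c$ depends only on the \emph{unsigned} number $N$ of self-intersection points (Proposition \ref{prop:formula of collision orbits}) --- no Whitney index, no signs, no face data. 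Even after this reduction, determining $N$ for general $(k,l)$ occupies the bulk of the paper (Claims 1--5 of Proposition \ref{prop:quadruple}); in your scheme the analogous enumeration for a generic orbit, which has strictly more double points, is dismissed as bookkeeping.

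The same gap recurs in the regularized computations. For $l$ odd you assert that ``each of the $k(l-1)$ crossings downstairs produces two crossings upstairs and one obtains $2k(l-1)+1$,'' but $J^+$ of a branched double cover is not obtained by doubling a crossing count: it transforms by a genuinely nontrivial rule (compare the covering formula of \cite{MaiAl2022} quoted in the paper's final remark, which involves the square of the degree and the double-point counts of both curves). The paper instead identifies $L_E^{-1}(K)$ as a $T_{k,l/2}$-type orbit when $l$ is even, respectively as a type-II collision-collision orbit with $k(l-1)$ double points when $l$ is odd, and then reapplies the collision-orbit formula --- that identification, not a crossing-doubling heuristic, is what produces the two parity cases. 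Finally, for $(\mathcal{J}_{E,M},n)$ your proposal reduces to ``I expect $n=l$'' plus bookkeeping, whereas the actual content is (i) proving that \emph{all} intersections of $B^{-1}(K)$ with the positive real axis carry the same sign, which requires the $\phi(z)=1/z$ symmetry of the lift and a monotonicity argument in elliptic coordinates (Proposition \ref{prop:leavf}), and (ii) constructing an explicit combinatorial model of the lifted curve (the circles-and-rays algorithm) on which Viro's formula can be evaluated to yield $1-k+kl-l^2 \pmod{2l}$. None of these steps is routine, and the proposal as written does not contain the ideas needed to carry them out.
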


\begin{example} The theorem above implies  that the invariants of   a $T_{3,2}$-type orbit~are 
\begin{equation}\label{eq:exak3l2}
\mathcal{J}_0  = 4, \;\; \mathcal{J}_E  =  \mathcal{J}_M  = 1, \;\; \left( \mathcal{J}_{E,M} , n  \right) = \left( 0({\rm mod }~4), 2 \right)
\end{equation}
and those of    a $T_{2,3}$-type orbit  are  
\begin{equation}\label{eq:exak2l3}
\mathcal{J}_0  = 5, \;\; \mathcal{J}_E  =  \mathcal{J}_M  = 9, \;\; \left( \mathcal{J}_{E,M}  , n  \right) = \left( 2 ({\rm mod }~6), 3 \right).
\end{equation}

In this example, the above results \eqref{eq:exak3l2} and \eqref{eq:exak2l3} are obtained by  direct calculation. 
   In order to compute $J^+$-invariant of a generic immersion, we make use of the following formula due to Viro \cite[Corollary 3.1.B and Lemma 3.2.A]{Viro96}:  
   let $K$ be a generic immersion of the circle into the complex plane $\C$. Denote by    $\Lambda_K$ the set of  connected components of the complement $\C \setminus K$, and by $D_K$ the set of double points of $K$. We then have
\[
J^+(K) = 1+ \# D_K- \sum_{C \in \Lambda_K}   w_C(K)  ^2 + \sum_{p \in D_K }   {\rm ind}_p (K)  ^2,
\]
where $w_c(K)$ indicates the winding number of $K$ around an interior point in $C \in \Lambda_K$, and ${\rm ind}_p(K)$ is the arithmetic mean of the winding numbers of the  connected components adjacent to the double point $p \in D_K$.

We first consider the $T_{3,2}$-type orbit $K$ given   in Figure \ref{fig:ex1-1}. With the given orientation, we have $w_E(K)=w_M(K)=0$, and  Viro's formula tells us that $J^+(K)= 1 + 9 - 6 + 0 =4$ from which we find
\[
\mathcal{J}_0 (K) = J^+(K) + \frac{ w_E(K)^2}{2} + \frac{ w_M(K)^2}{2} = 4 + 0 + 0 =4.
\]
To determine $\mathcal{J}_E(K)$ and $\mathcal{J}_M(K)$ we first note that since $K$ is symmetric with respect to the $q_2$-axis, it follows that  $L_E^{-1}(K) = L_M^{-1}(K)$, implying that  $\mathcal{J}_E(K)=\mathcal{J}_M(K)$. The winding number of $K$ around $E$ (and also around $M$) is even, so that the preimage $L_E^{-1}(K)$ consists of two connected components that are related by $180$ degree rotation. We choose one of them as in the top right of Figure \ref{fig:ex1-1}. We then find  $w_{M_1}(\tilde{K}_E)^2 = w_{M_2}(\tilde{K}_E)^2 =1$ and again by Viro's formula $J^+(\tilde{K}_E) = 1 + 3 - 4 +0=0$, 
implying that
\[
\mathcal{J}_E(K) = J^+(\tilde{K}_E) + \frac{w_{M_1}(\tilde{K}_E)^2}{2} +\frac{w_{M_2}(\tilde{K}_E)^2}{2}   = 0 + \frac{1}{2} + \frac{1}{2} =1.
\]

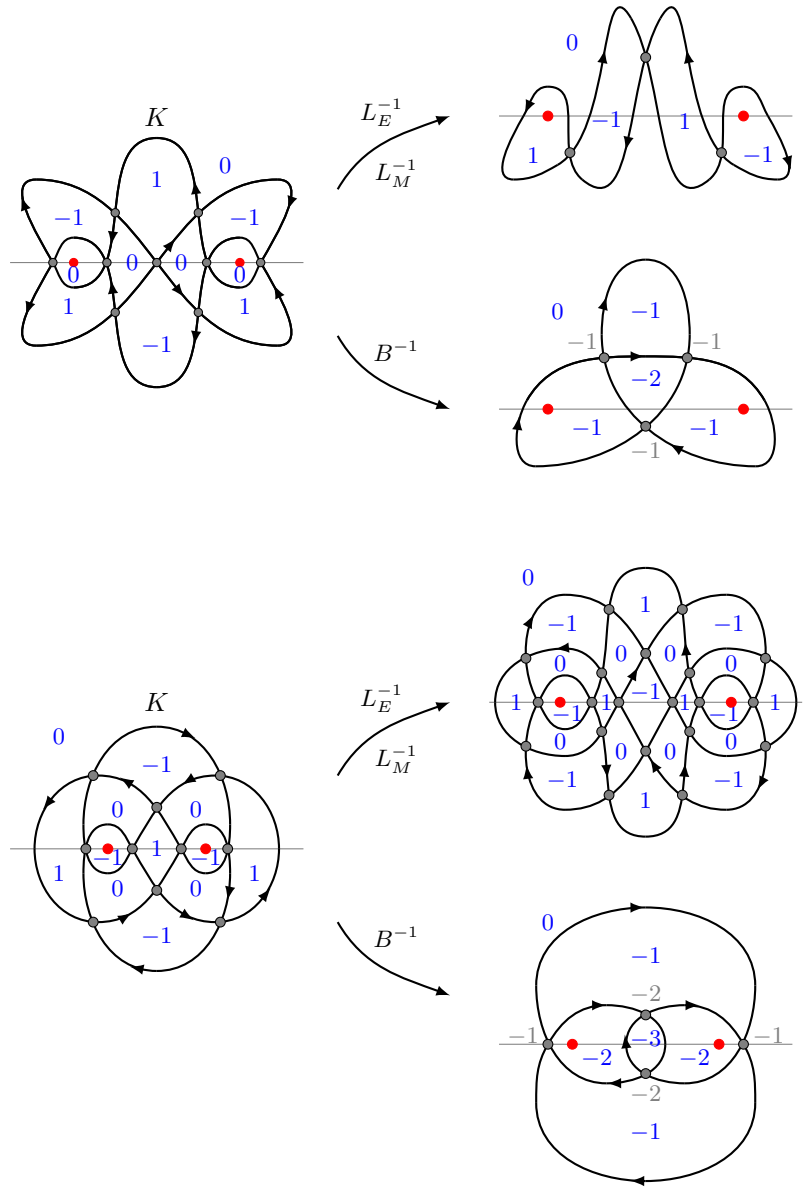
\begin{figure}[htbp]
\centering
\begin{tikzpicture} [scale=0.65]

\begin{scope}[yshift=12cm]
\draw[gray] (-3,0) to (3,0);

\begin{scope}[scale=0.85]

       \filldraw[draw=red, fill=red] (-2,0) circle (0.1cm);
       \filldraw[draw=red, fill=red] (2,0) circle (0.1cm);

\draw[thick]       (-2.5,0)  [out=60, in=180] to (-2,0.6); 
\draw[thick, postaction={decorate},  decoration={markings, mark=at position 0.4 with {\arrow{<}}}]     (-2,0.6)  [out=0, in=180] to (0,-3); 
\draw[thick, postaction={decorate},  decoration={markings, mark=at position 0.6 with {\arrow{>}}}]       (-2.5,0)  [out=240, in=180] to (-2.9,0.-2); 
\draw[thick, postaction={decorate},  decoration={markings, mark=at position 0.6 with {\arrow{>}}}]      (-2.9 ,-2)  [out=0, in=180] to (2.9, 2); 
\draw[thick, postaction={decorate},  decoration={markings, mark=at position 0.4 with {\arrow{>}}}]       (2.9 ,2)  [out=0, in=60] to (2.5,0); 
\draw[thick, postaction={decorate},  decoration={markings, mark=at position 0.4 with {\arrow{<}}}]       (0 ,-3)  [out=0, in=180] to (2, 0.6); 

\begin{scope}[xscale=-1]
\draw[thick]       (-2.5,0)  [out=60, in=180] to (-2,0.6); 
\draw[thick]     (-2,0.6)  [out=0, in=180] to (0,-3); 
\draw[thick, postaction={decorate},  decoration={markings, mark=at position 0.4 with {\arrow{<}}}]       (-2.5,0)  [out=240, in=180] to (-2.9,0.-2); 
\draw[thick, postaction={decorate},  decoration={markings, mark=at position 0.4 with {\arrow{<}}}]      (-2.9 ,-2)  [out=0, in=180] to (2.9, 2); 
\draw[thick, postaction={decorate},  decoration={markings, mark=at position 0.4 with {\arrow{<}}}]       (2.9 ,2)  [out=0, in=60] to (2.5,0); 
\draw[thick]       (0 ,-3)  [out=0, in=180] to (2, 0.6); \end{scope}

\begin{scope}[yscale=-1]

\draw[thick]       (-2.5,0)  [out=60, in=180] to (-2,0.6); 
\draw[thick, postaction={decorate},  decoration={markings, mark=at position 0.4 with {\arrow{<}}}]     (-2,0.6)  [out=0, in=180] to (0,-3); 
\draw[thick]       (-2.5,0)  [out=240, in=180] to (-2.9,0.-2); 
\draw[thick]      (-2.9 ,-2)  [out=0, in=180] to (2.9, 2); 
\draw[thick]       (2.9,2)  [out=0, in=60] to (2.5,0); 
\draw[thick, postaction={decorate},  decoration={markings, mark=at position 0.4 with {\arrow{<}}}]       (0 ,-3)  [out=0, in=180] to (2, 0.6); 
\begin{scope}[xscale=-1]
\draw[thick]       (-2.5,0)  [out=60, in=180] to (-2,0.6); 
\draw[thick]     (-2,0.6)  [out=0, in=180] to (0,-3); 
\draw[thick]       (-2.5,0)  [out=240, in=180] to (-2.9,-2); 
\draw[thick]      (-2.9 ,-2)  [out=0, in=180] to (2.9, 2); 
\draw[thick]       (2.9 ,2)  [out=0, in=60] to (2.5,0); 
\draw[thick]       (0 ,-3)  [out=0, in=180] to (2, 0.6); 
\end{scope}
\end{scope}

       \filldraw[draw=black, fill=gray] (2.5,0) circle (0.1cm);
       \filldraw[draw=black, fill=gray] (-2.5,0) circle (0.1cm);
       \filldraw[draw=black, fill=gray] (0,0) circle (0.1cm);
       \filldraw[draw=black, fill=gray] (1.2,0) circle (0.1cm);
       \filldraw[draw=black, fill=gray] (-1.2,0) circle (0.1cm);
       \filldraw[draw=black, fill=gray] (1,1.2) circle (0.1cm);
       \filldraw[draw=black, fill=gray] (-1, 1.2) circle (0.1cm);
       \filldraw[draw=black, fill=gray] (1, -1.2) circle (0.1cm);
       \filldraw[draw=black, fill=gray] (-1,  -1.2) circle (0.1cm);

\node at (0, 3.5) {  $K$};

\end{scope}

\node[blue] at (-1.8 , -0.9) {\small $1$};
\node[blue] at (-1.8 , 0.9) {\small $-1$};
\node[blue] at (1.8 , -0.9) {\small $1$};
\node[blue] at ( 1.8 , 0.9) {\small $-1$};
 \node[blue] at (0 , 1.7) {\small $1$};
\node[blue] at (0 , -1.7) {\small $-1$};
 \node[blue] at (-0.5 ,0 ) {\small $0$};
 \node[blue] at (1.7 , -0.25 ) {\small $0$};
   \node[blue] at (0.5 ,0 ) {\small $0$};
 \node[blue] at (-1.7 , -0.25 ) {\small $0$};
 \node[blue] at (1.4 ,2 ) {\small $0$};

  \draw[thick, ->] (3.7,1.5) [out=60, in=200] to (6,3); 
\node[below] at (4.9,2.3) {\small $L_M^{-1}$};
\node[above] at (4.6,2.6) {\small $L_E^{-1}$};

\begin{scope}[yscale=-1]
  \draw[thick, ->] (3.7,1.5) [out=60, in=200] to (6 ,3); 
\node[below] at ( 4.9,1.4) {\small $B^{-1}$};
\end{scope}

%%%%%%%%%%%%%%%%%%%%%%%%%%%%%%%%%%%Levi-Civita%%%%%%%%%%%%%%%%%%%%%%%%%%%%%%

\begin{scope}[xshift=10cm, yshift=3cm]
\draw[gray] (-3,0) to (3,0);

\draw[thick, postaction={decorate},  decoration={markings, mark=at position 0.4 with {\arrow{<}}}]       (-2.5 ,0)  [out=60, in=180] to (-2, 0.6); 
\draw[thick]       (-2  ,0.6)  [out=0, in=140] to (-1.3 , -1.3); 
\draw[thick, postaction={decorate},  decoration={markings, mark=at position 0.4 with {\arrow{<}}}]       (-1.3 , -1.3)  [out=320, in=240] to (0.3, 2); 
\draw[thick, postaction={decorate},  decoration={markings, mark=at position 0.4 with {\arrow{<}}}]       (0.3 ,2)  [out=60, in=150] to (1.8, -1);
\draw[thick ]       (1.8 , -1)  [out=330, in=180] to (2.7, -1.3);
\draw[thick, postaction={decorate},  decoration={markings, mark=at position 0.4 with {\arrow{<}}}]       (2.7  , -1.3)  [out=0, in=300] to (2.5, 0);

\begin{scope}[xscale=-1]
\draw[thick]       (-2.5 ,0)  [out=60, in=180] to (-2, 0.6); 
\draw[thick]       (-2  ,0.6)  [out=0, in=140] to (-1.3 , -1.3); 
\draw[thick]       (-1.3 , -1.3)  [out=320, in=240] to (0.3, 2); 
\draw[thick, postaction={decorate},  decoration={markings, mark=at position 0.4 with {\arrow{<}}}]       (0.3 ,2)  [out=60, in=150] to (1.8, -1);
\draw[thick ]       (1.8 , -1)  [out=330, in=180] to (2.7, -1.3);
\draw[thick ]       (2.7  , -1.3)  [out=0, in=300] to (2.5, 0); 
\end{scope}

    \filldraw[draw=black, fill=gray] (-1.55,-0.75) circle (0.1cm);
       \filldraw[draw=black, fill=gray] (1.55,-0.75) circle (0.1cm);
       \filldraw[draw=black, fill=gray] (0,1.2) circle (0.1cm);

\node[blue] at (-2.3, -0.8) {\small $1$};
\node[blue] at (-0.8 ,-0.1) {\small $-1$};
\node[blue] at (0.8 , -0.1) {\small $1$};
\node[blue] at (2.3, -0.8) {\small $-1$};

 \node[blue] at (-1.5 ,1.5 ) {\small $0$};

   \filldraw[draw=red, fill=red] (-2,0) circle (0.1cm);
       \filldraw[draw=red, fill=red] (2,0) circle (0.1cm);

\end{scope}

%%%%%%%%%%%%%%%%%%%%%%%%%%%%%%%%%%%Birkhoff%%%%%%%%%%%%%%%%%%%%%%%%%%%%%%

\begin{scope}[xshift=10cm, yshift=-3cm]

\draw[gray] (-3,0) to (3,0);

\begin{scope}[scale=0.9]

\begin{scope}[yshift=0.2cm]

\draw[thick, postaction={decorate},  decoration={markings, mark=at position 0.99 with {\arrow{>}}}]       (-2.7 ,0)  [out=60, in=180] to (0,1);
\draw[thick]       (0 ,1 )  [out=0, in=120] to (2.7,0);
\draw[thick]       (2.7  ,0)  [out=300, in=0] to (2.5,-1.5 );
\draw[thick, postaction={decorate},  decoration={markings, mark=at position 0.4 with {\arrow{>}}}]       ( 2.5 ,-1.5)  [out=180, in=270] to (-1,1.5);
\draw[thick, postaction={decorate},  decoration={markings, mark=at position 0.4 with {\arrow{>}}}]       (-1 ,1.5)  [out=90, in=180] to (0,3.2);

\begin{scope}[xscale=-1]

\draw[thick]     (-2.7 ,0)  [out=60, in=180] to (0,1);
\draw[thick]       (0 ,1)  [out=0, in=120] to (2.7,0);
\draw[thick, postaction={decorate},  decoration={markings, mark=at position 0.4 with {\arrow{<}}}]       (2.7  ,0)  [out=300, in=0] to (2.5,-1.5 );
\draw[thick]     ( 2.5 ,-1.5)  [out=180, in=270] to (-1,1.5);
\draw[thick]       (-1 ,1.5)  [out=90, in=180] to (0,3.2);
\end{scope}
\end{scope}
\end{scope}

 \node[blue] at (-1.8 ,2 ) {\small $0$};

\node[above, gray] at (1.25, 1) {\small $-1$};
\node[above, gray] at (-1.3, 1 ) {\small $-1$};
\node[below, gray] at (0, -0.5) {\small $-1$};

\node[blue] at (0,0.6) {\small $-2$};
\node[blue] at (-1.2,-0.4) {\small $-1$};
\node[blue] at (1.2, -0.4) {\small $-1$};
\node[blue] at (0,2) {\small $-1$};

    \filldraw[draw=black, fill=gray] (0.85, 1.05) circle (0.1cm);
       \filldraw[draw=black, fill=gray] (-0.85, 1.05) circle (0.1cm);
       \filldraw[draw=black, fill=gray] (0,-0.35) circle (0.1cm);

   \filldraw[draw=red, fill=red] (-2,0) circle (0.1cm);
       \filldraw[draw=red, fill=red] (2,0) circle (0.1cm);

\end{scope}

\end{scope}

\draw[gray] (-3,0) to (3,0);

\draw[thick, postaction={decorate},  decoration={markings, mark=at position 0.5 with {\arrow{<}}}]         (-2.5,0)  [out=90, in=180] to (-1.2,1.5); 
\draw[thick, postaction={decorate},  decoration={markings, mark=at position 0.3 with {\arrow{<}}}]        (-1.2, 1.5)  [out= 0, in=120] to (0.5,0); 
\draw[thick]        (0.5,0)  [out=300, in=180] to ( 1 , -0.5); 
\draw[thick]       (1, -0.5)  [out= 0, in=270] to ( 1.5 ,0.5); 
\draw[thick, postaction={decorate},  decoration={markings, mark=at position 0.8 with {\arrow{<}}}]       (1.5, 0.5)  [out= 90, in=0] to (0 ,2.5);

\begin{scope}[xscale=-1]
\draw[thick]      (-2.5,0)  [out=90, in=180] to (-1.2,1.5); 
\draw[thick, postaction={decorate},  decoration={markings, mark=at position 0.3 with {\arrow{>}}}]         (-1.2, 1.5)  [out= 0, in=120] to (0.5,0); 
\draw[thick]    (0.5,0)  [out=300, in=180] to ( 1 , -0.5); 
\draw[thick]       (1, -0.5)  [out= 0, in=270] to ( 1.5 ,0.5); 
\draw[thick]      (1.5, 0.5)  [out= 90, in=0] to (0 ,2.5); 
\end{scope} 

\begin{scope}[yscale=-1]
\draw[thick]         (-2.5,0)  [out=90, in=180] to (-1.2,1.5); 
\draw[thick, postaction={decorate},  decoration={markings, mark=at position 0.3 with {\arrow{>}}}]        (-1.2, 1.5)  [out= 0, in=120] to (0.5,0); 
\draw[thick]        (0.5,0)  [out=300, in=180] to ( 1 , -0.5); 
\draw[thick]       (1, -0.5)  [out= 0, in=270] to ( 1.5 ,0.5); 
\draw[thick, postaction={decorate},  decoration={markings, mark=at position 0.2 with {\arrow{>}}}]       (1.5, 0.5)  [out= 90, in=0] to (0 ,2.5);

\begin{scope}[xscale=-1]
\draw[thick, postaction={decorate},  decoration={markings, mark=at position 0.5 with {\arrow{<}}}]         (-2.5,0)  [out=90, in=180] to (-1.2,1.5); 
\draw[thick, postaction={decorate},  decoration={markings, mark=at position 0.3 with {\arrow{<}}}]        (-1.2, 1.5)  [out= 0, in=120] to (0.5,0); 
\draw[thick]        (0.5,0)  [out=300, in=180] to ( 1 , -0.5); 
\draw[thick]       (1, -0.5)  [out= 0, in=270] to ( 1.5 ,0.5); 
\draw[thick, postaction={decorate},  decoration={markings, mark=at position 0.9 with {\arrow{<}}}]       (1.5, 0.5)  [out= 90, in=0] to (0 ,2.5); 
\end{scope} \end{scope}

       \filldraw[draw=red, fill=red] (-1 ,0) circle (0.1cm);
       \filldraw[draw=red, fill=red] (1 ,0) circle (0.1cm);

       \filldraw[draw=black, fill=gray] (1.45,0) circle (0.1cm);
       \filldraw[draw=black, fill=gray] (-1.45,0) circle (0.1cm);
       \filldraw[draw=black, fill=gray] (0,0.85) circle (0.1cm);
              \filldraw[draw=black, fill=gray] (0,-0.85) circle (0.1cm);

       \filldraw[draw=black, fill=gray] (0.5,0) circle (0.1cm);
       \filldraw[draw=black, fill=gray] (-0.5,0) circle (0.1cm);
        \filldraw[draw=black, fill=gray] (-1.3, 1.5) circle (0.1cm);
       \filldraw[draw=black, fill=gray] (1.3, -1.5) circle (0.1cm);
       \filldraw[draw=black, fill=gray] (-1.3, -1.5) circle (0.1cm);
       \filldraw[draw=black, fill=gray] (1.3, 1.5) circle (0.1cm);

\node at (0, 3) {  $K$};

  \node[blue] at (0 , 1.7) {\small $-1$};
\node[blue] at (0 , -1.8) {\small $-1$};
\node[blue] at (2 , -0.5) {\small $1$};
\node[blue] at (-2 , -0.5) {\small $1$};
\node[blue] at (0,0) {\small $1$};

\node[blue] at (0.8 , 0.8) {\small $0$};
\node[blue] at (-0.8 , 0.8) {\small $0$};
\node[blue] at (0.8 , -0.8) {\small $0$};
\node[blue] at (-0.8 , -0.8) {\small $0$};

\node[blue] at (-1 , -0.2) {\small $-1$};
\node[blue] at (1 , -0.2) {\small $-1$};

 \node[blue] at (-2 ,2.3 ) {\small $0$};

  \draw[thick, ->] (3.7,1.5) [out=60, in=200] to (6,3); 
\node[below] at (4.9,2.3) {\small $L_M^{-1}$};
\node[above] at (4.6,2.6) {\small $L_E^{-1}$};
\begin{scope}[yscale=-1]
  \draw[thick, ->] (3.7,1.5) [out=60, in=200] to (6 ,3); 
\node[below] at ( 4.9,1.4) {\small $B^{-1}$};
\end{scope}

%%%%%%%%%%%%%%%%%%%%%%%%%%%%%%%%%%%Levi-Civita%%%%%%%%%%%%%%%%%%%%%%%%%%%%%%

\begin{scope}[xshift=10cm, yshift=3cm]
\draw[gray] (-3.2,0) to (3.2,0);

\begin{scope}[scale=1.1]
\draw[thick]       (-2  ,0)  [out=60, in=180] to (-1.5, 0.5); 
\draw[thick]       (-1.5 ,0.5)  [out= 0, in=120] to (-1 , 0 ); 
 \draw[thick, postaction={decorate},  decoration={markings, mark=at position 0.5 with {\arrow{>}}}]       (-1  ,0)  [out=300, in=180] to (-0, -2.5); 
\draw[thick, postaction={decorate},  decoration={markings, mark=at position 0.6 with {\arrow{>}}}]       (0  , -2.5)  [out=0, in=240] to (1, 0 ); 
\draw[thick]       (1  ,0)  [out=60, in=180] to ( 1.5, 0.5); 
\draw[thick]       (1.5  ,0.5)  [out= 0, in=120] to (2,0); 
\draw[thick, postaction={decorate},  decoration={markings, mark=at position 0.7 with {\arrow{>}}}]       ( 2  ,0)  [out=300, in=0] to ( 1.5, -2); 
\draw[thick, postaction={decorate},  decoration={markings, mark=at position 0.6 with {\arrow{>}}}]       (1.5  , -2)  [out=180, in=300] to (-0.5, 0 ); 
\draw[thick]     (-0.5  ,0)  [out=120, in=0] to (-1.5, 1); 
\draw[thick, postaction={decorate},  decoration={markings, mark=at position 0.1 with {\arrow{>}}}]       (-1.5  , 1)  [out=180, in=90] to (-2.8, 0); 
\draw[thick]      (-2 .8 ,0)  [out=270, in=180] to (-1.5, -1); 
\draw[thick]       (-1.5  , -1)  [out=0, in=240] to (-0.5, 0 ); 
\draw[thick, postaction={decorate},  decoration={markings, mark=at position 0.25 with {\arrow{>}}}]       (-0.5  ,0)  [out=60, in=180] to ( 1.5,2); 
\draw[thick]      (1.5   , 2)  [out= 0, in=60] to (2, 0 ); 
\draw[thick]      ( 2  ,0)  [out=240, in=0] to ( 1.5, -0.5); 
\draw[thick ]      (1.5  , -0.5)  [out=180, in=300] to (1, 0); 

\draw[thick, postaction={decorate},  decoration={markings, mark=at position 0.4 with {\arrow{>}}}]       (1  ,0)  [out=120, in=0] to (0, 2.5);

\draw[thick]     (0  , 2.5)  [out=180, in=60] to (-1 , 0 ); 
\draw[thick]      (-1  ,0)  [out=240, in=0] to (-1.5, -0.5); 
\draw[thick]       (-1.5  ,  -0.5)  [out=180, in=300] to (-2 , 0 ); 
\draw[thick, postaction={decorate},  decoration={markings, mark=at position 0.7 with {\arrow{>}}}]       (-2  , 0)  [out=120, in=180] to (-1.5 ,  2 ); 
\draw[thick]      (-1.5  ,2)  [out= 0, in=120] to (0.5 , 0 ); 
\draw[thick]      (0.5  , 0)  [out=300, in=180] to (1.5 ,   -1 ); 
\draw[thick]      (1.5  ,  -1)  [out= 0, in=270] to (2.8 ,  0 ); 
\draw[thick]      (2.8  , 0)  [out=90, in=0] to ( 1.5 ,  1 ); 
\draw[thick]    (1.5  , 1)  [out=180, in=60] to (0.5 ,  0 ); 
\draw[thick]   (0.5  , 0)  [out=240, in=0] to (-1.5 , - 2 ); 
\draw[thick, postaction={decorate},  decoration={markings, mark=at position 0.5 with {\arrow{>}}}]       (-1.5  ,  -2)  [out=180, in=240] to (-2 ,  0 ); 
\end{scope}

    \filldraw[draw=black, fill=gray] (-2.2 , 0) circle (0.1cm);
    \filldraw[draw=black, fill=gray] (-1.1 , 0) circle (0.1cm);
    \filldraw[draw=black, fill=gray] (-0.55 , 0) circle (0.1cm);
    \filldraw[draw=black, fill=gray] (2.2 , 0) circle (0.1cm);
    \filldraw[draw=black, fill=gray] (1.1 , 0) circle (0.1cm);
    \filldraw[draw=black, fill=gray] (0.55 , 0) circle (0.1cm);
    
     \filldraw[draw=black, fill=gray] (0 , 1) circle (0.1cm);
     \filldraw[draw=black, fill=gray] (0 ,- 1) circle (0.1cm);
     \filldraw[draw=black, fill=gray] (-0.75 , 1.9) circle (0.1cm);
     \filldraw[draw=black, fill=gray] (0.75 , 1.9) circle (0.1cm);
     \filldraw[draw=black, fill=gray] (-0.75 ,- 1.9) circle (0.1cm);
     \filldraw[draw=black, fill=gray] (0.75 ,- 1.9) circle (0.1cm);

     \filldraw[draw=black, fill=gray] (-2.45 , 0.9) circle (0.1cm);
     \filldraw[draw=black, fill=gray] (-2.45 , -0.9) circle (0.1cm);
     \filldraw[draw=black, fill=gray] (2.45 , 0.9) circle (0.1cm);
     \filldraw[draw=black, fill=gray] ( 2.45 , -0.9) circle (0.1cm);

     \filldraw[draw=black, fill=gray] (-0.9 , 0.6) circle (0.1cm);
     \filldraw[draw=black, fill=gray] (-0.9 , -0.6) circle (0.1cm);
     \filldraw[draw=black, fill=gray] ( 0.9 , 0.6) circle (0.1cm);
     \filldraw[draw=black, fill=gray] ( 0.9 ,- 0.6) circle (0.1cm);

 \node[blue] at (-2 .4,2.55 ) {\small $0$};

\node[blue] at (0,0.2) {\small $-1$};
\node[blue] at (0,2) {\small $ 1$};
\node[blue] at (0,-2) {\small $ 1$};
\node[blue] at (-1.7,1.6) {\small $-1$};
\node[blue] at (-1.7,-1.6) {\small $-1$};
\node[blue] at (1.7,1.6) {\small $-1$};
\node[blue] at (1.7,-1.6) {\small $-1$};

\node[blue] at (-2.65,0) {\small $1$};
\node[blue] at ( 2.65,0) {\small $ 1$};
\node[blue] at (-0.8,0) {\small $ 1$};
\node[blue] at ( 0.8,0) {\small $ 1$};
\node[blue] at (1.6, -0.25) {\small $-1$};
\node[blue] at (-1.6, -0.25) {\small $-1$};

 \node[blue] at (-0.5, -1) {\small $0$};
 \node[blue] at ( 0.5, -1) {\small $0$};
 \node[blue] at ( 0.5,  1) {\small $0$};
 \node[blue] at (-0.5,  1) {\small $0$};

 \node[blue] at (-1.75, -0.8) {\small $0$};
 \node[blue] at (1.75, -0.8) {\small $0$};
 \node[blue] at (-1.75,  0.8) {\small $0$};
 \node[blue] at ( 1.75,  0.8) {\small $0$};

   \filldraw[draw=red, fill=red] (-1.75 ,0) circle (0.1cm);
       \filldraw[draw=red, fill=red] (1.75,0) circle (0.1cm);

\end{scope}

%%%%%%%%%%%%%%%%%%%%%%%%%%%%%%%%%%%Birkhoff%%%%%%%%%%%%%%%%%%%%%%%%%%%%%%

\begin{scope}[xshift=10cm, yshift=-4cm]

\draw[gray] (-3,0) to (3,0);

\begin{scope}[scale=0.8]

\draw[thick, postaction={decorate},  decoration={markings, mark=at position 0.99 with {\arrow{>}}}]       (-2.5 ,0)  [out=60, in=180] to (-1,1);
\draw[thick]     (-1 ,1)  [out= 0, in=90] to (0.5, 0);
 \draw[thick, postaction={decorate},  decoration={markings, mark=at position 0.99 with {\arrow{>}}}]       (0.5 ,0)  [out=270, in=0] to (-1,-1);
 \draw[thick]     (-1, -1)  [out=180, in=270] to (-2.8, 1.5);
 \draw[thick, postaction={decorate},  decoration={markings, mark=at position 0.99 with {\arrow{>}}}]       (-2.8 , 1.5)  [out=90, in=180] to (0,3.5);

 \begin{scope}[xscale=-1]
 
\draw[thick, postaction={decorate},  decoration={markings, mark=at position 0.99 with {\arrow{<}}}]       (-2.5 ,0)  [out=60, in=180] to (-1,1);
\draw[thick]     (-1 ,1)  [out= 0, in=90] to (0.5, 0);
 \draw[thick, postaction={decorate},  decoration={markings, mark=at position 0.05 with {\arrow{<}}}]       (0.5 ,0)  [out=270, in=0] to (-1,-1);
 \draw[thick]     (-1, -1)  [out=180, in=270] to (-2.8, 1.5);
 \draw[thick]      (-2.8 , 1.5)  [out=90, in=180] to (0,3.5);

\end{scope}
 
 \begin{scope}[yscale=-1]
   \draw[thick]     (-2.5,0)  [out=120, in=270] to (-2.8, 1.5);
 \draw[thick, postaction={decorate},  decoration={markings, mark=at position 0.99 with {\arrow{<}}}]       (-2.8 , 1.5)  [out=90, in=180] to (0,3.5);

 \begin{scope}[xscale=-1]
 
   \draw[thick]     (-2.5,0)  [out=120, in=270] to (-2.8, 1.5);
 \draw[thick]      (-2.8 , 1.5)  [out=90, in=180] to (0,3.5);

\end{scope}
\end{scope}

\end{scope}

\node[right, gray] at (2 ,0.15) {\small $-1$};
\node[left, gray] at (-2 ,0 .15) {\small $-1$};
\node[above, gray] at (0, 0.65) {\small $-2$};
\node[below, gray] at (0, -0.65) {\small $-2$};

 \node[blue] at (-2 ,2.5 ) {\small $0$};

\node[blue] at (0,0.1 ) {\small $-3$};
\node[blue] at (-1 ,-0.3) {\small $-2$};
\node[blue] at (1 , -0.3) {\small $-2$};
\node[blue] at (0,1.8) {\small $-1$};
\node[blue] at (0,-1.8) {\small $-1$};

    \filldraw[draw=black, fill=gray] (-2  , 0) circle (0.1cm);
       \filldraw[draw=black, fill=gray] (2  ,0) circle (0.1cm);
       \filldraw[draw=black, fill=gray] (0,-0.6 ) circle (0.1cm);
       \filldraw[draw=black, fill=gray] (0,0.6 ) circle (0.1cm);

   \filldraw[draw=red, fill=red] (-1.5,0) circle (0.1cm);
       \filldraw[draw=red, fill=red] (1.5,0) circle (0.1cm);

\end{scope}

      \end{tikzpicture}
 \caption{Examples of a $T_{3,2}$-type orbit (above) and a $T_{2,3}$-type orbit (below) with    its regularized orbits.  
The blue numbers indicate    the winding numbers of the corresponding connected components.  
The gray numbers correspond to   the winding numbers of the associated   double points, which are also  shaded in   gray.  Any double point  not labeled with a number has  winding number $0$}
 \label{fig:ex1-1}
\end{figure}

Since $w_E(K)+w_M(K)=0$ is even, the preimage $B^{-1}(K)$ also has two connected components, and we choose one component as in Figure \ref{fig:ex1-1} from which we immediately obtain 
\[
n(K) = \lvert w_0(\tilde K) \rvert = 2.
\]
Using Viro's formula we further find
\[
\mathcal{J}_{E,M}(K) = J^+(\tilde{K})  = 1  + 3 - 7 + 3 = 0.
\] 
We observe that the results obtained thus far are in agreement with those presented in \eqref{eq:exak3l2}.

We then move to the $T_{2,3}$-type orbit $K$ given in Figure \ref{fig:ex1-1}.  Arguing as in the previous case we find
\begin{align*}
     \mathcal{J}_0 (K) &= \displaystyle J^+(K) + \frac{ w_E(K)^2}{2} + \frac{ w_M(K)^2}{2} = \left( 1 + 10 -  7   +  0   \right) + \frac{1}{2} + \frac{1}{2}= 5,\\
\mathcal{J}_E(K) &=  \displaystyle J^+(\tilde{K}_E) + \frac{w_{M_1}(\tilde{K}_E)^2}{2} +\frac{w_{M_2}(\tilde{K}_E)^2}{2}    = \left( 1 + 20   -  13  + 0    \right) + \frac{1}{2} + \frac{1}{2} = 9,    \\
n(K) &= \lvert w_0(\tilde K) \rvert =   3, \\
\mathcal{J}_{E,M}(K) &= J^+(\tilde{K})  = 1+4- 19 + 10 = -4 =2({\rm mod 6}),
\end{align*}
which precisely match the  corresponding results given in  \eqref{eq:exak2l3}.

\end{example}

 \noindent
 {\bf Acknowledgments.} HH would like to express her sincere gratitude to her supervisor Urs Frauenfelder for his encouragement and continued support. She also thanks the Institute for Mathematics at the University of Augsburg for providing an excellent research environment. Part of this work was carried out during her visit to Kongju National University. She cordially thanks its warm hospitality. HH was supported by a travel scholarship given by the B\"uro f\"ur Chancengleichheit at the University of Augsburg and partially supported by Deutsche Forschungsgemeinschaft grant FR 2637/5-1.

 \section{$J^+$-invariants for two-center Stark-Zeeman systems}
 
This section reviews certain results on periodic orbits in planar two-center Stark-Zeeman systems from \cite{CFZ23} and presents a mild generalization inspired by the approach in \cite{KimSZ}.

\subsection{Planar two-center Stark-Zeeman systems}

Let $  E, M \in \mathbb{R}^2 \equiv \mathbb{C} $ be two distinct points, referred to as the  {Earth} and the  {Moon}, respectively. The gravitational potentials induced by  $E$ and $M$ are defined as
\begin{align*}
&V_E \colon \mathbb{R}^2 \setminus \{E\} \to \mathbb{R}, \quad q \mapsto -\frac{\mu_E}{\lvert q - E \rvert}, \\
&V_M \colon \mathbb{R}^2 \setminus \{M\} \to \mathbb{R}, \quad q \mapsto -\frac{\mu_M}{\lvert q - M\rvert},
\end{align*}
  where  $\mu_E$, $\mu_M>0$ denote the masses of $E$ and $M$,  respectively.

Let $V_1 \colon U_0 \to \R $ be a smooth function defined on an open set $U_0 \subset \R^2$ containing $E$ and $M$. We consider
\[
V := V_E + V_M + V_1  \colon U \to \mathbb{R}, \quad \quad U := U_0 \setminus \{E, M\}.
\]

Given a smooth function $\mathcal{B}  \colon U_0 \to \R$, the associated magnetic form is defined by
\[
\sigma_\mathcal{B}   = \mathcal{B}  (q) dq_1 \wedge dq_2 \in \Omega^2(U_0).
\]
The corresponding {twisted symplectic form} on $T^*U_0$ is then given by
\[
\omega_\mathcal{B}   = \omega_0+ \pi^* \sigma_\mathcal{B}   \in \Omega^2(T^*U_0),
\]
where $  \pi : T^*U_0 \cong U_0 \times \R^2 \to U_0 $ denotes the footpoint projection.

Fix a Riemannian metric $g$ on $TU_0$ and denote by $g^*$ its dual metric on $T^*U_0$. {\bf A two-center  Stark-Zeeman system} is a Hamiltonian system associated with the Hamiltonian of the form
\[
  H_{V,g} (q,p) = \frac{1}{2} \| p \|_{g^*_q}^2 + V(q) ,
\]
defined on $( T^*U, \omega_\mathcal{B}  ). $  The corresponding Hamiltonian vector field $X^\mathcal{B}  _{V,g}$ is  implicitly defined by the equation
\[
dH_{V,g} = \omega_\mathcal{B}  (\cdot, X^\mathcal{B}  _{V,g}).
\]
Notice that when $V_1$ and $\mathcal{B}  $ vanish, the two-center Stark-Zeeman system reduces to the Euler problem.

Since $\sigma_\mathcal{B}  $ is closed and the second de Rham cohomology group of $U_0$ is trivial, there exists a one-form $\alpha_A = A_1(q) dq_1 + A_2(q) dq_2$ such that  $d\alpha_A = \sigma_\mathcal{B}  . $ The symplectomorphism $\Phi_A \colon \left( T^*U, \omega_0 \right)  \to \left( T^*U, \omega_\mathcal{B}   \right)$, defined by
\[
\Phi_A(q,p) = (q, p- A(q)), \quad \quad A(q) = (A_1(q),A_2(q)),
\]
pulls back the Hamiltonian flow of $H_{V,g}$ with respect to the twisted symplectic form $\omega_\mathcal{B}  $ to the Hamiltonian flow of the Hamiltonian
\begin{equation}\label{eq:newHam}
H_{A,V,g}  (q,p):=  \Phi_A^* H_{V,g}(q,p) = \frac{1}{2} \| p - A(q) \|^2_{g_q^*} +V(q) 
\end{equation}
with respect to the standard symplectic form $\omega_0$. It follows that the Hamiltonian vector field $X_{V,g}^\mathcal{B}  $ is conjugate to the Hamiltonian vector field $X_{A,V,g}$, implicitly defined as
\[
dH_{A,V,g} = \omega_0 ( \cdot, X_{A,V,g}).
\]
A two-center Stark-Zeeman system with $A \equiv0$ is called a {\bf two-center Stark system}.

\begin{remark}
   The function $V_1$ can be regarded as a non-gravitational, position-dependent perturbation, whereas $\mathcal{B}  $ accounts for velocity-dependent effects, including those arising from the Lorentz or Coriolis force.
    See \eqref{eq:newHam}.
    \end{remark}

\subsection{Periodic orbits in two-center Stark-Zeeman systems}
 The behavior of families of periodic orbits in (single-center) Stark-Zeeman systems has been studied   in  \cite{CFvK17, KimSZ}. Due to the local nature of the analysis,   the corresponding results extend naturally to two-center Stark-Zeeman systems, as briefly described in the following.

The Hamiltonian $H_{A,V,g}$ is time-independent, allowing us to study the dynamics restricted to individual energy levels.  From now on, for the sake of convenience, we shall write $H =H_{A,V,g}.$

Fix an energy value $c \in \R$ and let $\Sigma_c$ be a connected component  of the energy level $H ^{-1}(c)$. We impose the following assumptions: 
\begin{itemize}
\item  $c$ is a regular value of $V$; and 

\item    Hill's region $\mathcal{K}_c :=\pi ( \Sigma_c ) = \{ q \in U \mid V(q) \leq c\} $ is bounded, and the union $\mathcal{K}_c \cup \{E, M\}$ is simply connected.  
\end{itemize}

 Let $q^s \in \mathcal{K}_c, s \in (-\varepsilon, \varepsilon)$ be a family of periodic orbits contained in $\mathcal{K}_c$. Recall that the Hamiltonian $H$ admits singularities at $E$ and $M$, corresponding to collisions. In what follows, we allow these orbits to pass  through the singularities.  See Section \ref{sec:CFZinv} for further details.

Pick a point $q_0 = q^0(t_0)$. The orbit $q^0$   fails to be an immersion at $q_0$ only if   $q_0=E$,  $q_0=M, $ or $q_0 \in \partial \mathcal{K}_c$. We first consider the case where  $\mathcal{B} \neq 0$.
Then, in each of the above situations,   the orbit $q^0$ develops a   cusp at $t=t_0$, through which a loop is either created or annihilated.    In contrast, when  $\mathcal{B}   \equiv 0, $ namely, in a two-center Stark system, the orbit retraces its previous path near $q_0$, and every point on the orbit becomes an inverse self-tangency. This observation leads to the following definition.

\begin{definition}\label{def:threeorbits}
A periodic orbit in a two-center Stark system that exhibits inverse self-tangencies is classified as

    \begin{enumerate}

        \item a brake-brake orbit, if it hits the boundary of Hill's region at two distinct points;

        \item a brake-collision orbit, if it hits the boundary of Hill's region and also collides with either $E$ or $M$; and 

        \item a collision-collision orbit, if it undergoes two collisions--either both with the same singularity or one with each of $E$ and $M$. The former case is referred to as  type I and the latter as type II.

    \end{enumerate}
\end{definition}

Considering all the events that may occur along families of periodic orbits, Cieliebak, Frauenfelder, and Zhao introduce the following notion.

\begin{definition}[{\cite[Definition 5.4]{CFZ23}}] \label{def:two-center-SZhomotopy}
A smooth one-parameter family $(K^s)_{s \in [0,1]}$ of (simple) closed curves in $\C$ is called a {\bf two-center Stark-Zeeman homotopy} if it satisfies the following conditions:
\begin{itemize}

\item for all but finitely many values of $s \in (0,1)$, the  curve $K^s$ is a generic immersion in $\C \setminus \{ E,M\}$, meaning that it is an immersion   with only transverse double points. 

\item  at the finitely many exceptional values $s \in (0,1)$ where $K^s$ is not a generic immersion,   the family undergoes one of the following local events  (see Figures 5-8 in \cite{CFvK17}):
    \begin{itemize}
        \item[$(I_E)$] birth or death of interior loops around $E$ through  cusps at $E$;
        \item[$(I_M)$] birth or death of interior loops around $M$ through  cusps at $M$;
        \item [$(I_\infty)$] birth or death of exterior loops through cusps at the boundary of Hill's region;
        \item [$(II^-)$] crossings through inverse self-tangencies;
        \item [$(III)$] crossings through triple points.
    \end{itemize}
 
\end{itemize}

\end{definition}

As in \cite{KimSZ}, we treat the case $\mathcal{B}   \equiv 0 $ separately.

\begin{definition}[{cf. \cite[Definition 2.10]{KimSZ}}] \label{def:two-center-Shomotopy}
A two-center Stark-Zeeman homotopy is called a {\bf two-center Stark homotopy} if it involves only the events ($II^-$) and ($III$), where ($II^-$) now corresponds to the emergence of the distinguished orbits introduced in Definition \ref{def:threeorbits}.

\end{definition}

\subsection{Four invariants of two-center Stark-Zeeman homotopies}\label{sec:CFZinv}

In \cite{Arnold1994} Arnold introduced the $J^+$-invariant for generic homotopies, which   is invariant under the events ($I_{\infty}$), ($II^-$) and ($III$), but is sensitive to direct self-tangencies: it increases (or decreases) by $2$ when passing through a direct self-tangency that creates (or eliminates) a double point. Moreover, the $J^+$-invariant   satisfies the following normalization condition. To describe this,  we define the standard curves $K_j, j \in \N \cup \{ 0 \}$ as follows: $K_0$ is the figure eight, $K_1 $ is the circle, and for every $j \geq 2$ the curve $K_j$ is the circle with $j-1$ interior loops attached as illustrated in Figure \ref{fig:Kj}.  
    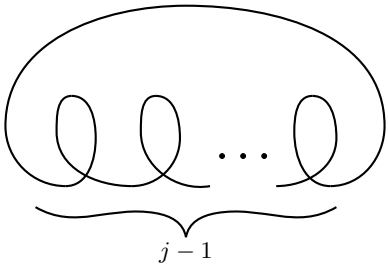
\begin{figure}[ht]
     \centering
\begin{tikzpicture} [scale=0.8]
 
     \draw[thick]  ( -2.5, 0 ) [out=90 ,in=180]  to (0.5,2);
     \draw[thick]  ( 3.8, 0 ) [out=90 ,in=0]  to (0.5,2);
    \draw[thick] (-2.5,0) [out=270, in=180] to (-1.5, -1);
    \draw[thick] (-1.5,-1) [out=0, in=270] to (-1,-0.2); 
    \draw[thick] (-1,-0.2) [out=90, in=0] to (-1.4,0.5);
    \draw[thick] (-1.4, 0.5) [out=180, in=90]  to (-1.65, -0.1);
  \draw[thick] (-1.65  , -0.1) [out=270, in=180] to (-0.4, -1);
 \draw[thick] (-0.4,-1) [out=0, in=270] to (0.4, -0.2);
    \draw[thick] (0.4,-0.2) [out=90, in=0] to (0,0.5);
    \draw[thick] (0, 0.5) [out=180, in=90]  to (-0.25, -0.1);
  \draw[thick] (-0.25  , -0.1) [out=270, in=190] to (0.9, -1);

       \filldraw[draw=black, fill=black] (1.1,-0.5) circle (0.04cm);
       \filldraw[draw=black, fill=black] (1.45,-0.5) circle (0.04cm);
       \filldraw[draw=black, fill=black] (1.8,-0.5) circle (0.04cm);

    \draw[thick] (2,-1) [out=0, in=270] to (3,-0.2); 
    \draw[thick] (3,-0.2) [out=90, in=0] to (2.6,0.5);
    \draw[thick] (2.6, 0.5) [out=180, in=90]  to (2.3, -0.1);
  \draw[thick] (2.3  , -0.1) [out=270, in=180] to (2.9, -1);
 \draw[thick] (2.9,-1) [out=0, in=270] to (3.8,0);

\begin{scope}[yshift=-0.05cm]
\draw[thick] (-2 , -1.3) [out=330, in=170] to (0, -1.4);
\draw[thick] (0, -1.4) [out=350, in=100] to (0.5, -1.8);
\begin{scope}[xscale=-1, xshift=-1cm]
    \draw[thick] (-2 , -1.3) [out=330, in=170] to (0, -1.4);
\draw[thick] (0, -1.4) [out=350, in=100] to (0.5, -1.8);
\end{scope}\end{scope}
\node  at (0.5, -2.1)  {\small $j-1$};
      \end{tikzpicture}
    \caption{Standard curves $K_j$ for $  j \geq 2$}
     \label{fig:Kj}
 \end{figure}  
The  $J^+$-invariant of the standard curve $K_j$ is defined by
\[
J^+(K_j) = \begin{cases} 2-2j & \;  j \geq 1, \\ 0 &  \; j=0. \end{cases}
\]

In the context of two-center Stark-Zeeman systems, families of periodic orbits may  exhibit the additional events ($I_E$) and ($I_M$), which cause  the $J^+$-invariant to change along such families. This observation led Cieliebak, Frauenfelder, and Zhao \cite{CFZ23} to introduce new invariants tailored to two-center Stark-Zeeman systems, based on modifications of the $J^+$-invariant. Throughout this section, the metric is assumed to be conformal to the standard Euclidean metric.

The first of these invariants, denoted by $\mathcal{J}_0$, requires no regularization.  It is defined by correcting the $J^+$-invariant with the winding numbers of the curve $K$ around the singularities $E$ and $M$:
\[
\mathcal{J}_0(K) := J^+(K) + \frac{w_E(K)^2}{2} + \frac{w_M(K)^2}{2},
\]
where  $w_E$ and $w_M$ denote the winding numbers of $K$ around $ E $ and $ M $, respectively.

To handle double collisions at $ E $ or $ M $, we apply Levi-Civita regularization. Recall that the Levi-Civita transformation is given by the cotangent lift of the complex squaring map $L(z) = z^2, \; z \in \C \setminus \{ 0 \}$.   By applying a translation, we may assume without loss of generality that $E $ lies at the origin. After applying Levi-Civita regularization, collisions at $E$ are regularized, but $M$   remains non-regularized and is pulled back to two singularities, denoted by $M_1$ and $M_2$.  Let $K$ be a closed curve that is a generic immersion in $\C \setminus \{ E, M\}$. If $w_E(K)$ is odd,   then the preimage $L^{-1}(K)$ is connected and  satisfies $w_E(L^{-1}(K)) = 2w_E(K)$.   If   $w_E(K)$ is even,   then  $L^{-1}(K)$  consists of two connected components that are related by a rotation of  $180^{\circ}$. In this case, each component  of $L^{-1}(K)$  has the same winding number around $E$ as $K$ does.  Let  $\tilde{K}_E$  denote  a connected component of ${L}^{-1}(K)$. We then define the invariant
\begin{equation*}\label{eq:JE}
\mathcal{J}_E(K) := J^+(\tilde{K}_E) + \frac{w_{M_1}(\tilde{K}_E)^2}{2} + \frac{w_{M_2}(\tilde{K}_E)^2}{2}.
\end{equation*}
By placing $M$ at the origin instead and applying the same procedure, we define 
\[
\mathcal{J}_M(K)  := J^+(\tilde{K}_M) + \frac{w_{E_1}(\tilde{K}_M)^2}{2} + \frac{w_{E_2}(\tilde{K}_M)^2}{2},
\]
where $\tilde{K}_M$ denotes a   component of $L^{-1}(K)$ after regularizing the singularity at $M$.
These invariants are independent of the choice  of the component $\tilde{K}_E$ or $\tilde{K}_M$.

The final invariant appears as a pair,  obtained by simultaneously regularizing   collisions at both $E$ and $M$.  
To this end, we place the singularities  at $E=-1$ and $M=+1$.  
Consider the Birkhoff regularization map  
\begin{equation}\label{eq:Birkhoff}
B \colon    \mathbb{C} \setminus \{0\} \to \mathbb{C}, \quad B(z) = \frac{1}{2} \left( z + \frac{1}{z} \right)
\end{equation}
which is the conjugation of the complex squaring map $L(z)=z^2$ via the M\"obius transformation  $T(z)=(1 - z)/( 1 + z)$.  Let  $K \subset \C \setminus \{ E, M \} $ be a   closed curve that is a generic immersion. Then the preimage $B^{-1}(K) \subset \C \setminus \{ 0 \}$ consists of 
\begin{itemize}
    \item a single connected component if $w_E(K)+w_M(K)$ is odd;
    
    \item   two connected components if $w_E(K)+w_M(K)$ is even. 
\end{itemize}
See \cite[Proposition 4.2]{CFZ23} for a proof.  Select   one connected component of $B^{-1}(K)$, denoted by $\tilde K \subset \C \setminus \{ 0 \}, $ and define the following two invariants:
\[
n(K) := |w_0(\tilde{K})| \in \mathbb{N} \cup \{ 0 \} 
\]
and
\[
\mathcal{J}_{E,M}(K) := 
\begin{cases} 
J^+(\tilde{K})  & \text{if } n(K) = 0, \\ 
J^+(\tilde{K}) \mod 2n(K)  & \text{if } n(K) > 0.
\end{cases}
\]
 The pair $(\mathcal{J}_{E,M}, n)$ is independent of the choice of the connected component $\tilde K$.

\begin{proposition}[{\cite[Propositions 6.2, 6.4 and 6.9]{CFZ23}}]
Each of the four invariants $\mathcal{J}_0, \mathcal{J}_E, \mathcal{J}_M$ and $\left(\mathcal{J}_{E,M}, n\right)$ remains constant under two-center Stark-Zeeman homotopies.
\end{proposition}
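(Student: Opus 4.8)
The plan is to use the fact that a two-center Stark-Zeeman homotopy is generic on each of the finitely many complementary subintervals of $[0,1]$: there the curve $K^s$ varies through generic immersions of $\C\setminus\{E,M\}$, so all four quantities (which take values in $\tfrac{1}{2}\Z$, respectively in $\Z\times(\N\cup\{0\})$ for the pair) vary continuously and are therefore locally constant. It then suffices to check that each invariant is unchanged across each of the five admissible events $(I_E),(I_M),(I_\infty),(II^-),(III)$. Since every event is supported in a small disk, the analysis is local: only the piece of the curve being modified, the winding numbers of the affected regions, and the index of any new double point can contribute to the change.

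For the events that avoid the singularities, invariance is inherited from Arnold's theorem. Under $(I_\infty)$, $(II^-)$ and $(III)$ the $J^+$-invariant is unchanged, and since these events take place away from $E$ and $M$ the winding numbers $w_E,w_M$ are also unaffected; hence $\mathcal{J}_0$ is constant across them. For the regularized invariants I would lift the entire homotopy through the relevant branched double cover. For $\mathcal{J}_E$ this is the Levi-Civita map $L(z)=z^2$ branched over $E$: a generic homotopy downstairs lifts to a generic homotopy of the chosen component $\tilde K_E$ in $\C\setminus\{0,M_1,M_2\}$, and each of $(I_\infty),(II^-),(III)$ lifts to finitely many events of the same three types, under which $J^+(\tilde K_E)$ is invariant while $w_{M_1},w_{M_2}$ are untouched. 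The decisive structural point is that, because $E$ is a branch point, a collision at $E$ becomes a smooth regular passage upstairs; thus the event $(I_E)$ lifts to \emph{no} $J^+$-changing event for $\tilde K_E$, so $\mathcal{J}_E$ is automatically insensitive to births and deaths of loops around $E$. The same argument with $E$ and $M$ interchanged handles $\mathcal{J}_M$, and the Birkhoff map $B$ of \eqref{eq:Birkhoff}, which is branched over both $E=-1$ and $M=+1$, simultaneously trivializes both $(I_E)$ and $(I_M)$ for the component $\tilde K$.

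The crux is therefore the behavior of the correction terms at the singular events: $\mathcal{J}_0$ under $(I_E)$ and $(I_M)$, and $\mathcal{J}_E$ under $(I_M)$ (respectively $\mathcal{J}_M$ under $(I_E)$), where $J^+$ genuinely changes. Here I would model the event by the local birth of a loop enclosing the relevant center and compute $\Delta J^+$ directly from Viro's formula, tracking the single new double point, the new region it bounds, and the winding numbers of the regions adjacent to it; one then checks that $\Delta J^+$ is cancelled exactly by the change $\tfrac{1}{2}((w\pm\delta)^2-w^2)$ of the corresponding correction term. I expect the main obstacle to lie precisely in this bookkeeping, and in verifying that the winding number around the center changes by an \emph{even} amount. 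This evenness is what makes the compensation integral (so that $J^+$ remains in $\Z$) and, simultaneously, preserves the parity of $w_E$, $w_M$ and $w_E+w_M$; that parity in turn keeps the number of connected components of the preimages $L^{-1}(K)$ and $B^{-1}(K)$ constant along the homotopy, a consistency condition without which the chosen lift could not be followed continuously.

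Finally, for the pair $(\mathcal{J}_{E,M},n)$ two further points must be settled. First, $n=\lvert w_0(\tilde K)\rvert$ must be shown constant: since $B$ sends both $0$ and $\infty$ to $\infty$, the lifted events all occur away from the origin and leave $w_0(\tilde K)$ unchanged, so $n$ is an honest invariant. Second, $J^+(\tilde K)$ is preserved by every lifted event (those coming from $(I_E),(I_M)$ are smooth passages through the branch points $\pm 1$, while the rest are $J^+$-neutral events away from the origin); the reduction modulo $2n$ is exactly what renders this quantity independent of the choice between the two components of $B^{-1}(K)$, which are exchanged by the deck transformation $z\mapsto 1/z$. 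Since both $n$ and the class of $J^+(\tilde K)$ modulo $2n$ are preserved by each event, the reduced pair is well defined and homotopy invariant. Assembling these local computations across all five events yields the constancy of all four invariants.
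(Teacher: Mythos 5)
First, a point of reference: the paper does not prove this proposition at all; it is stated as a quotation of \cite[Propositions 6.2, 6.4 and 6.9]{CFZ23}, so your attempt must be measured against the proof in that reference. Your overall architecture is the same as theirs and is sound: the invariants are locally constant on the generic strata, the events $(I_\infty)$, $(II^-)$, $(III)$ are harmless (for $(I_\infty)$ one should note that the new loop is born in a region whose winding number is zero, since the complement of Hill's region lies in the unbounded component of the complement of the curve), and the regularized invariants are controlled by lifting the whole homotopy through the Levi-Civita and Birkhoff branched covers. Your remarks on parity, on the constancy of $n$, and on the role of the reduction mod $2n$ are likewise correct in spirit.

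The genuine gap is exactly the step you defer: the behavior at $(I_E)$ and $(I_M)$, which is the actual mathematical content of the proposition. You say you \emph{would} model the event as "the local birth of a loop\ldots tracking the single new double point" and that you \emph{expect} the cancellation and the evenness of the winding jump to come out of the bookkeeping; neither is proven, and the naive model you describe is inconsistent as stated. A kink changes the winding number by $\pm 1$, which would make $\Delta\bigl(w_E^2/2\bigr)$ a half-integer, whereas $\Delta J^+$ is an even integer (every generic immersion has even $J^+$, being generically homotopic to some standard curve $K_j$), so no cancellation could ever occur in that model. What must actually be established is: (i) at a collision with $E$ the winding number $w_E$ jumps by $\pm 2$ --- this follows from the local model in the regularized plane, where the lift crosses the branch point transversally, say $\tilde z_s(t)=t+is$, so that downstairs $q_s(t)=(t+is)^2$ sweeps an angle $-2\pi$ for $s>0$ and $+2\pi$ for $s<0$; with the magnetic term one side of the event carries the small loop and its extra double point while the other does not, yet the winding still jumps by two; and (ii) across this event $\Delta J^+=\mp(2w_E+2)$, computed for instance from Viro's formula applied to the two local configurations, which exactly cancels $\Delta\bigl(w_E^2/2\bigr)=\pm(2w_E+2)$. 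These two computations are the heart of the Cieliebak--Frauenfelder--Zhao proof and are absent from yours. A smaller inaccuracy: $(I_E)$ does not lift to "no event" upstairs; it lifts to an inverse self-tangency at the branch point between the two antipodal branches of the lift (a self-tangency of $\tilde K_E$ with itself when the lift is connected), which is $J^+$-neutral. Your conclusions for $\mathcal{J}_E$ under $(I_E)$ and for $(\mathcal{J}_{E,M},n)$ survive, but for that reason, not because the lifted family undergoes no event.
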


Cieliebak, Frauenfelder, and Zhao studied in detail the relationships among the four invariants, see \cite[Section 6.4]{CFZ23}. The following result, which will be of particular relevance to us, is a special case of their results. 

 \begin{lemma}[{\cite[Corollary 6.23]{CFZ23}}]
Let $K \subset \C \setminus \{ E, M\}$ be a closed curve that is a generic immersion. If both $w_E(K)$ and $w_M(K)$ are odd, then
\[
\mathcal{J}_E(K) = \mathcal{J}_M(K) = 2 \mathcal{J}_0(K)-1.
\]
\end{lemma}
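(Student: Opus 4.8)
The plan is to express both $\mathcal{J}_E(K)$ and $\mathcal{J}_M(K)$ purely in terms of $J^+(K)$, $w_E(K)$ and $w_M(K)$ by analyzing the Levi-Civita branched double cover $L(z)=z^2$, and then to compare the result with $\mathcal{J}_0(K)$. By the evident symmetry it suffices to treat $\mathcal{J}_E$, placing $E$ at the origin; the statement for $\mathcal{J}_M$ follows verbatim after placing $M$ at the origin, and the equality $\mathcal{J}_E(K)=\mathcal{J}_M(K)$ will drop out because both equal $2\mathcal{J}_0(K)-1$. Since $w_E(K)$ is odd, the preimage $\tilde K_E=L^{-1}(K)$ is a single connected curve that double-covers $K$, and I would compute its three Viro ingredients — number of double points, region winding numbers, and double-point indices — in terms of those of $K$.

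The crucial input is a winding-number transfer identity: for every $a\notin\tilde K_E$ one has $w(\tilde K_E,a)=w_C(K)$, where $C$ is the region of $\C\setminus K$ containing $a^2$. I would prove this in two steps. First, the partial-fraction identity $L'(z)/(L(z)-a^2)=1/(z-a)+1/(z+a)$, together with the fact that $L$ maps $\tilde K_E$ onto $K$ as a degree-two cover, shows that integrating $d\log(L(z)-a^2)$ over $\tilde K_E$ gives $w(\tilde K_E,a)+w(\tilde K_E,-a)=2\,w_C(K)$. Second, the deck transformation $z\mapsto -z$ preserves $\tilde K_E$ setwise and, because $\tilde K_E$ is connected and $w_E(K)$ is odd, acts on it as an orientation-preserving reparametrization (a half-period shift), so $w(\tilde K_E,-a)=w(\tilde K_E,a)$. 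Combining the two yields the identity. Applied to $a=M_1,M_2$, the two preimages of $M$, it immediately gives $w_{M_1}(\tilde K_E)=w_{M_2}(\tilde K_E)=w_M(K)$.

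With this identity the Viro bookkeeping becomes routine. Each double point of $K$ lies off the origin, hence has exactly two preimages, each a transverse double point of $\tilde K_E$ whose local four-sector picture is diffeomorphic (via $L$) to that of $K$; therefore $\#D_{\tilde K_E}=2\,\#D_K$ and, by the transfer identity, $\sum_{\tilde p}\mathrm{ind}_{\tilde p}(\tilde K_E)^2=2\sum_{p}\mathrm{ind}_p(K)^2$. For the region term I would use an Euler-characteristic count: a connected generic immersion with $d$ double points bounds $d+2$ regions, so comparing the region counts of $K$ and $\tilde K_E$ forces exactly two regions of $\C\setminus K$ to have connected preimage while the remaining $\#D_K$ regions each split into two. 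The two exceptional regions must be the one containing $E$ (winding $w_E(K)$) and the unbounded one (winding $0$), as each contains a loop of odd winding about the origin. This yields $\sum_{\tilde C}w_{\tilde C}(\tilde K_E)^2=2\sum_{C}w_C(K)^2-w_E(K)^2$, and substituting the three ingredients into Viro's formula gives $J^+(\tilde K_E)=2J^+(K)-1+w_E(K)^2$. Feeding in the $M_i$-windings then produces $\mathcal{J}_E(K)=2J^+(K)-1+w_E(K)^2+w_M(K)^2=2\mathcal{J}_0(K)-1$.

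The main obstacle I anticipate is the region-counting step and the precise identification of which regions have connected preimage: the Euler-characteristic argument pins the number at exactly two, but one must argue carefully — via the monodromy homomorphism $\pi_1(C)\to\Z/2$ measuring winding parity about the origin — that these are precisely the region $C_0$ containing $E$ and the unbounded region $C_\infty$, thereby excluding, say, an annular region encircling the origin. The orientation check for the symmetry $z\mapsto -z$ (that it preserves, rather than reverses, the orientation of $\tilde K_E$) is the other delicate point, and it is exactly here that the hypothesis ``$w_E(K)$ odd'' enters; the analogous use of ``$w_M(K)$ odd'' is what secures the parallel computation for $\mathcal{J}_M$.
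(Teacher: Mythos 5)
Your proposal is correct, but there is nothing in this paper to compare it against: the paper does not prove this lemma, it imports it wholesale from \cite[Corollary 6.23]{CFZ23} as a special case of the relations among the four invariants worked out in Section 6.4 of that reference. So what you have written is a genuinely independent, self-contained derivation. Its core steps all check out. The transfer identity $w(\tilde K_E,a)=w_C(K)$ holds exactly as you argue: the partial-fraction computation gives $w(\tilde K_E,a)+w(\tilde K_E,-a)=2w_C(K)$ because $L$ restricted to the connected preimage traverses $K$ twice, and oddness of $w_E(K)$ makes the deck transformation $z\mapsto -z$ act on $\tilde K_E$ as a half-period shift, hence orientation-preserving, so $w(\tilde K_E,a)=w(\tilde K_E,-a)$. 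The Euler-characteristic count (a connected generic immersion with $d$ double points has $d+2$ complementary regions) does pin the number of regions with connected preimage at exactly two, and since the region containing $E$ and the unbounded region each visibly have connected preimage (a small, resp.\ large, circle has odd winding about the origin) and are distinct (because $w_E(K)$ is odd, hence nonzero), they must be the two exceptional ones; your worry about annular regions is settled by the count itself, with no further monodromy analysis needed. Substituting $\#D_{\tilde K_E}=2\#D_K$, the doubled index sum, and the region identity into Viro's formula then yields $J^+(\tilde K_E)=2J^+(K)-1+w_E(K)^2$, and feeding in $w_{M_1}(\tilde K_E)=w_{M_2}(\tilde K_E)=w_M(K)$ gives $\mathcal{J}_E(K)=2\mathcal{J}_0(K)-1$, with the $\mathcal{J}_M$ computation symmetric. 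One step you should make explicit rather than implicit: $\tilde K_E$ has no double points other than the preimages of those of $K$, because two distinct strands of $\tilde K_E$ lying over the same strand of $K$ are deck-related and so could only meet at the fixed point $0$ of $z\mapsto -z$, which is excluded since $E\notin K$. Compared with the citation route, your argument buys self-containedness using precisely the toolkit (Viro's formula plus covering-space bookkeeping for the Levi-Civita map) that this paper already deploys in its example computations; what it does not reproduce is the full system of relations of \cite[Section 6.4]{CFZ23}, which also treats the even-parity cases, where correction terms appear.
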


We now restrict our attention to the case $\mathcal{B} \equiv 0$. In this setting, we extend the definitions of the four invariants to Stark systems and to show that they remain invariant under two-center Stark homotopies.  Following the approach in \cite{Kimcorr, KimSZ}, it suffices to verify that these invariants are preserved under the creation and annihilation of the distinguished periodic orbits described in Definition \ref{def:threeorbits}.

Let $K$ be a distinguished periodic orbit and denote by $\tilde K_1$ and $\tilde K_2$  two of its perturbations, each of which is a closed curve and a generic immersion in $\C \setminus \{ E, M\}$. We claim that the invariants of $K$ can be consistently defined as the invariants of any of its perturbations.

Suppose first that $K$ is a brake-brake orbit or a brake-collision orbit. Note that $\tilde{K}_1$ is obtained from $\tilde{K}_2$ by a change of orientation, the addition or removal of exterior loops, or a crossing through a triple point. These operations do not affect the $J^+$-invariant, nor do they alter the squared winding numbers that enter into the definitions of the four invariants. Therefore, the Cieliebak-Frauenfelder-Zhao invariants of $\tilde K_1, \tilde K_2$ coincide, which proves the claim in this case.

Now consider the case where $K$ is a collision-collision orbit of type I, i.e.~it collides twice with the same primary.  
Without loss of generality, we may assume that the collisions occur at $E$.  This situation coincides precisely with the one treated in \cite{Kimcorr}, so it  follows that $\mathcal{J}_0 (\tilde K_1)  = \mathcal{J}_0 (\tilde K_2)$. The same argument applies analogously to the other invariants.

 Finally, assume that $K$ is a collision-collision orbit of type II. As in the first case, the perturbations $\tilde K_1$ and $\tilde K_2$ differ only by orientation, the number of exterior loops, and the crossings through triple points. See Figure  \ref{collision-collision-orbit1}  and also \cite[Figure 7]{KimSZ}. Since $w_E(\tilde K_j)^2 = w_M(\tilde K_j)^2=1, j=1,2$, it follows that $\mathcal{J}_0(\tilde K_1) = \mathcal{J}_0 (\tilde K_2)$. We now apply regularization: either of $E$ or $M$ via the Levi-Civita regularization $L$, or of both $E$ and $M$ via the Birkhoff map $B$. In either case, each connected component of $L^{-1}(K)$ and $B^{-1}(K)$ remains a collision-collision  orbit of type II. Therefore,   arguing in a similar way, the invariants $\mathcal{J}_E$, $\mathcal{J}_M$ and $\left( \mathcal{J}_{E,M}, n\right)$ are well-defined. This concludes the proof of the claim.

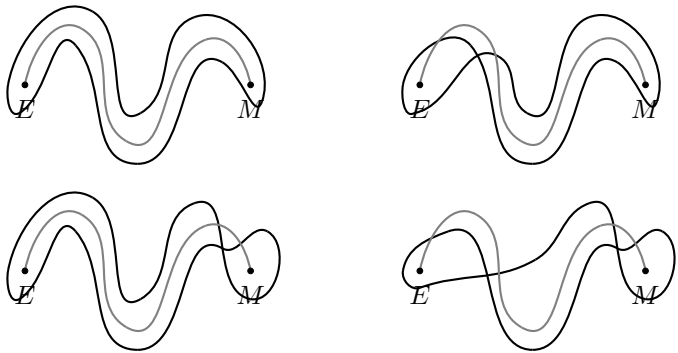
\begin{figure}[ht]
     \centering
\begin{tikzpicture}[scale=0.75]

\draw[  thick] (-2.3, 0) [out=80, in=150] to (-0.8,1.3);
\draw[  thick] ( -0.8   ,1.3  ) [out=330  , in= 210  ] to (0.05    ,  -0.5  );
\draw[  thick] (  0.05  ,  -0.5) [out= 390 , in= 200  ] to ( 1   , 1.2  );
\draw[  thick] (  1  , 1.2 ) [out= 380 , in= 70  ] to ( 2.2   , -0.3  );
\draw[  thick] ( 2.2   , -0.3 ) [out= 250 , in=  330 ] to (  1.5  , 0.4  );
\draw[  thick] (  1.5  , 0.4 ) [out= 150 , in= 0  ] to (  0  , -1.4  );
\draw[  thick] (  0  , -1.4 ) [out= 180 , in= 310  ] to (  -1.1  ,  0.7  );
\draw[  thick] (  -1.1  ,  0.7 ) [out= 130 , in= 30  ] to ( -2.1   , -0.5  );
\draw[  thick] (   -2.1  ,   -0.5   ) [out= 210   , in= 260   ] to (  -2.3     , 0    );

\draw[gray,thick] (-2,0) [out=80, in=150] to (-1 , 1);
\draw[gray,thick] (-1 ,1) [out=330, in=150] to (-0.2, -1);
\draw[gray,thick] (-0.2,-1) [out=330, in=200] to (1.2, 0.8);
\draw[gray,thick] (1.2,0.8) [out=380, in=100] to (2, 0 );
  \filldraw[draw=black, fill=black] (-2,0) circle (0.05cm);
 \filldraw[draw=black, fill=black] (2,0) circle (0.05cm);
  \node  at (-2,-0.1) [below] { $E$};
  \node  at (2,-0.1) [below] { $M$};

\begin{scope}[xshift=7cm]

\draw[  thick] (-2.3, 0) [out=80, in=130] to (-1.1 , 0.7);
 %\draw[blue, thick] ( -1.25   , 0.8  ) [out=380  , in= 210  ] to (0.05    ,  -0.5  );
\draw[  thick] (   -1.1  ,  0.7    ) [out=  310  , in= 180   ] to ( 0     , -1.4     );

\draw[  thick] (  -0.1   ,  -0.5) [out= 330 , in= 200  ] to ( 1   , 1.2  );
\draw[  thick] (  1  , 1.2 ) [out= 380 , in= 70  ] to ( 2.2   , -0.3  );
\draw[  thick] ( 2.2   , -0.3 ) [out= 250 , in=  330 ] to (  1.5  , 0.4  );
\draw[  thick] (  1.5  , 0.4 ) [out= 150 , in= 0  ] to (  0  , -1.4  );
%\draw[blue, thick] (  0  , -1.4 ) [out= 180 , in= 310  ] to (  -1.1  ,  0.7  );
%\draw[blue, thick] (  -1.1  ,  0.7 ) [out= 130 , in= 30  ] to ( -2.1   , -0.5  );
\draw[   thick] (   -2.1  ,   -0.5   ) [out= 210   , in= 260   ] to (  -2.3     , 0    );
\draw[  thick] (   -2.1, -0.5       ) [out= 10    , in= 150   ] to (   -0.6   , 0.5    );
\draw[ thick] (   -0.6  ,  0.5    ) [out= 330   , in= 150   ] to (   -0.1     ,  -0.5    );

\draw[gray,thick] (-2,0) [out=80, in=150] to (-1 , 1);
\draw[gray,thick] (-1 ,1) [out=330, in=150] to (-0.2, -1);
\draw[gray,thick] (-0.2,-1) [out=330, in=200] to (1.2, 0.8);
\draw[gray,thick] (1.2,0.8) [out=380, in=100] to (2, 0 );
  \filldraw[draw=black, fill=black] (-2,0) circle (0.05cm);
 \filldraw[draw=black, fill=black] (2,0) circle (0.05cm);
  \node  at (-2,-0.1) [below] { $E$};
  \node  at (2,-0.1) [below] { $M$};

\end{scope}

 \begin{scope}[yshift=-3.3cm]
     
\draw[  thick] (-2.3, 0) [out=80, in=150] to (-0.8,1.3);
\draw[  thick] ( -0.8   ,1.3  ) [out=330  , in= 210  ] to (0.05    ,  -0.5  );
\draw[  thick] (  0.05  ,  -0.5) [out= 390 , in= 200  ] to ( 1   , 1.2  );
\draw[  thick] (  1  , 1.2 ) [out= 380 , in= 170  ] to ( 2    , -0.5  );
\draw[  thick] ( 2    ,  -0.5    ) [out=   350 , in= 330   ] to (  2.3    , 0.7    );

 \draw[  thick] ( 2.3   , 0.7 ) [out= 150 , in=  330 ] to (  1.5  , 0.4  );
\draw[ thick] (  1.5  , 0.4 ) [out= 150 , in= 0  ] to (  0  , -1.4  );
\draw[  thick] (  0  , -1.4 ) [out= 180 , in= 310  ] to (  -1.1  ,  0.7  );
\draw[  thick] (  -1.1  ,  0.7 ) [out= 130 , in= 30  ] to ( -2.1   , -0.5  );
\draw[  thick] (   -2.1  ,   -0.5   ) [out= 210   , in= 260   ] to (  -2.3     , 0    );

\draw[gray,thick] (-2,0) [out=80, in=150] to (-1 , 1);
\draw[gray,thick] (-1 ,1) [out=330, in=150] to (-0.2, -1);
\draw[gray,thick] (-0.2,-1) [out=330, in=200] to (1.2, 0.8);
\draw[gray,thick] (1.2,0.8) [out=380, in=100] to (2, 0 );
  \filldraw[draw=black, fill=black] (-2,0) circle (0.05cm);
 \filldraw[draw=black, fill=black] (2,0) circle (0.05cm);
  \node  at (-2,-0.1) [below] { $E$};
  \node  at (2,-0.1) [below] { $M$};

\begin{scope}[xshift=7cm]

\draw[  thick] (-2.3, 0) [out=80, in=200] to (-1.5,0.7);
\draw[  thick] ( -1.5   , 0.7  ) [out=380  , in= 180  ] to (0     ,  -1.4  );
 \draw[  thick] (  0.05  ,  0.2) [out= 390 , in= 200  ] to ( 1   , 1.2  );
 \draw[  thick] (  1  , 1.2 ) [out= 380 , in= 170  ] to ( 2    , -0.5  );
 \draw[  thick] ( 2    ,  -0.5    ) [out=   350 , in= 330   ] to (  2.3    , 0.7    );

 \draw[  thick] ( 2.3   , 0.7 ) [out= 150 , in=  330 ] to (  1.5  , 0.4  );
\draw[ thick] (  1.5  , 0.4 ) [out= 150 , in= 0  ] to (  0  , -1.4  );
 \draw[  thick] (  -2.3   , 0 ) [out= 260 , in= 200  ] to (  -2  ,  -0.3  );
\draw[  thick] (  -2  ,  -0.3 ) [out= 380 , in= 210  ] to ( 0.05  , 0.2  );
%\draw[blue, thick] (   -2.1  ,   -0.5   ) [out= 210   , in= 260   ] to (  -2.3     , 0    );

\draw[gray, thick] (-2,0) [out=80, in=150] to (-1 , 1);
\draw[gray,thick] (-1 ,1) [out=330, in=150] to (-0.2, -1);
\draw[gray,thick] (-0.2,-1) [out=330, in=200] to (1.2, 0.8);
\draw[gray,thick] (1.2,0.8) [out=380, in=100] to (2, 0 );
  \filldraw[draw=black, fill=black] (-2,0) circle (0.05cm);
 \filldraw[draw=black, fill=black] (2,0) circle (0.05cm);
  \node  at (-2,-0.1) [below] { $E$};
  \node  at (2,-0.1) [below] { $M$};

\end{scope}
 
 \end{scope}
 %\draw[blue, thick] (     ,     ) [out=    , in=    ] to (      ,     );

      \end{tikzpicture}
\caption{Perturbations of a collision-collision orbit of type II}
\label{collision-collision-orbit1}
 \end{figure}

\begin{proposition}
    The four invariants $\mathcal{J}_0$, $\mathcal{J}_E$, $\mathcal{J}_M$ and $\mathcal{J}_{E,M}$ are preserved under two-center Stark-homotopies.
\end{proposition}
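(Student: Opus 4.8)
The plan is to verify invariance event-by-event, using the fact that, by Definition~\ref{def:two-center-Shomotopy}, a two-center Stark homotopy passes only through the two events $(II^-)$ and $(III)$. Between consecutive exceptional parameter values the family is a generic immersion in $\C\setminus\{E,M\}$ moving by isotopy, so each of $\mathcal{J}_0,\mathcal{J}_E,\mathcal{J}_M$ and $(\mathcal{J}_{E,M},n)$, being discrete-valued, is constant on each such interval. It therefore suffices to show that none of them jumps across a $(II^-)$ or a $(III)$ event.

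First I would dispose of the triple-point event $(III)$. A triple-point crossing is a generic local modification insensitive to the magnetic term, hence identical in the Stark and Stark-Zeeman settings, and it is one of the standard events of a two-center Stark-Zeeman homotopy; thus invariance of all four quantities across $(III)$ is already guaranteed by the Cieliebak-Frauenfelder-Zhao proposition quoted above. For completeness I would also record the direct reason for the three regularized invariants: since a triple point of $K$ lies away from $E$ and $M$, and both $L$ and $B$ are local diffeomorphisms away from their critical points (which map to the singularities), the preimages $L^{-1}$ and $B^{-1}$ carry each triple point of $K$ to triple points of the preimage. Hence a $(III)$ homotopy of $K$ lifts to a $(III)$ homotopy of every connected component $\tilde K_E,\tilde K_M,\tilde K$, and because $J^+$ is unchanged under triple moves and the relevant winding numbers $w_{M_i},w_{E_i},w_0$ are unaffected, $\mathcal{J}_E,\mathcal{J}_M$ and $(\mathcal{J}_{E,M},n)$ are preserved.

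It then remains to treat the inverse-self-tangency event $(II^-)$, which in the Stark setting is by definition the creation or annihilation of one of the distinguished orbits of Definition~\ref{def:threeorbits}. Here the curves immediately before and after the exceptional value are precisely two generic perturbations $\tilde K_1,\tilde K_2$ of the same distinguished orbit. I would invoke the claim established just above, which asserts that any two such perturbations carry identical values of all four invariants; this rules out a jump across $(II^-)$. Combining the two cases, each invariant is locally constant along the homotopy, and hence globally constant, which is the assertion.

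The genuinely nontrivial content is concentrated in that claim, and the hardest case inside it is the collision-collision orbit of type~II, where both singularities are active at once. The delicate point is that one must confirm that each connected component of $L^{-1}(K)$ and of $B^{-1}(K)$ is again a collision-collision orbit of type~II, so that the comparison of perturbations can be carried out componentwise after regularization; the normalization $w_E(\tilde K_j)^2=w_M(\tilde K_j)^2=1$ is what forces the $\mathcal{J}_0$ correction terms to agree, and the analogous statements for the regularized curves are what make $\mathcal{J}_E,\mathcal{J}_M$ and $(\mathcal{J}_{E,M},n)$ agree. By contrast, the brake-type cases and the $(III)$ step are routine, resting only on Arnold's invariance of $J^+$ under triple moves together with the stability of squared winding numbers under orientation reversal and exterior-loop moves.
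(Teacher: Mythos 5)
Your proposal is correct and takes essentially the same route as the paper: the paper reduces invariance under two-center Stark homotopies to the claim, established in the discussion immediately preceding the proposition, that any two generic perturbations of a distinguished orbit (with the collision-collision type II case resting on the fact that each component of $L^{-1}(K)$ and $B^{-1}(K)$ is again of type II) share all four invariants, and otherwise defers to the argument of \cite{Kimcorr}, which is precisely your event-by-event scheme. Your explicit handling of the $(III)$ event by lifting triple points through $L$ and $B$ is a detail the paper leaves to the Cieliebak--Frauenfelder--Zhao invariance result, but it does not change the structure of the argument.
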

\begin{proof}
  See a proof of \cite[Proposition 2.13]{Kimcorr}.
\end{proof}

The following proposition, which will be used in the proof of the main theorem in Section \ref{sec:main}, provides explicit formulas for the invariant $ \mathcal{J}_0 $ associated with the distinguished periodic orbits.

 \begin{proposition}\label{prop:formula of collision orbits}
Let $K$ be a distinguished orbit as in Definition \ref{def:threeorbits}, and suppose that it has $N$ self-intersection points. Then the invariant $\mathcal{J}_0(K)$ is given by
   \[
   \mathcal{J}_0(K) = \begin{cases} 
2N & \text{ if $K$ is a brake-brake orbit,} \\
2N +\frac{1}{2} &\text{ if $K$ is a  brake-collision orbit,} \\
   2N+2 & \text{ if $K$ is a  collision-collision orbit of type I,} \\ 2N+1 & \text{  if $K$ is a collision-collision orbit of type II.} \end{cases}
   \]
   \end{proposition}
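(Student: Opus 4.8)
The plan is to reduce the computation to a single generic perturbation $\tilde K$ of the distinguished orbit $K$ and then to evaluate the two ingredients of
$\mathcal{J}_0(\tilde K) = J^+(\tilde K) + \tfrac12 w_E(\tilde K)^2 + \tfrac12 w_M(\tilde K)^2$ separately. This reduction is legitimate because the discussion preceding the proposition shows that $\mathcal{J}_0$ takes a common value on all perturbations of a distinguished orbit, so I am free to pick a convenient one. Since $K$ retraces its trajectory (it consists of inverse self-tangencies), a natural model for $\tilde K$ is the boundary of a thin tubular neighborhood of the trajectory arc $\gamma$: along $\gamma$ the outgoing and returning branches run antiparallel, at a brake point they are joined by a smooth U-turn, and at a collision with $E$ or $M$ they are joined by a small loop encircling the corresponding primary, which is exactly the interior loop produced by an $(I_E)$- or $(I_M)$-event. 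In this model each transverse self-crossing of $\gamma$ produces $4$ transverse double points of $\tilde K$, since the two antiparallel branches of one strand cross the two branches of the other; hence $\tilde K$ carries $4N$ double points away from the primaries.

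The first step is the winding numbers. Away from the primaries the outgoing and returning branches cancel, so every region unaffected by a collision loop has winding number $0$, and $w_E, w_M$ are governed entirely by the collision loops. Each collision with $E$ contributes a loop encircling $E$ once, and the reversibility of the Stark flow forces successive wraps at the same primary to reinforce rather than cancel; hence $|w_E(\tilde K)|$ equals the number of collisions with $E$, and likewise for $M$. This yields $\tfrac12(w_E^2 + w_M^2) = 0, \tfrac12, 2, 1$ in the brake-brake, brake-collision, collision-collision type I, and type II cases respectively, accounting precisely for the constant terms in the statement. It therefore remains to prove that $J^+(\tilde K) = 2N$ in all four cases.

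The second and main step is to show $J^+(\tilde K) = 2N$, which I would do by induction on $N$. For the base case $N = 0$ the arc $\gamma$ is embedded: for a brake-brake orbit $\tilde K$ is an embedded circle with $J^+ = 0$, and for the brake-collision and type II cases it is this circle carrying one or two collision loops, for which Viro's formula again gives $J^+ = 0$ (the complementary regions have winding $0, \pm1$ and the indices are read off from the loops); the type I normalization is subtler and I single it out below. For the inductive step, introducing one further transverse self-crossing of $\gamma$ corresponds to pushing one antiparallel band of $\tilde K$ across another, which can be arranged to proceed through direct self-tangencies whose net effect on $J^+$ is $+2$, while the accompanying inverse self-tangencies and triple-point passages leave $J^+$ unchanged and the winding numbers around $E, M$ are unaffected. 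Alternatively, and perhaps more safely, I would verify $J^+(\tilde K) = 2N$ directly from Viro's formula: with $\#D_{\tilde K} = 4N$ plus the fixed collision contributions and the winding numbers of the complementary regions described explicitly by the band-plus-loops structure, the alternating sum $1 + \#D_{\tilde K} - \sum_C w_C^2 + \sum_p \mathrm{ind}_p^2$ collapses to $2N$, exactly as in the worked computations of Figure \ref{fig:ex1-1}.

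The hard part will be the bookkeeping in this second step, and it is most delicate for the collision-collision orbit of type I (Definition \ref{def:threeorbits}). There the two collisions occur at the same primary, the associated winding number is $\pm 2$, and so $\tilde K$ necessarily carries extra double points forced by the double wrapping; one must check that these are organized so that the winding numbers and indices entering Viro's formula still combine to give precisely $J^+ = 2N$, and that the sign convention making the two wraps reinforce rather than cancel is indeed the one dictated by the dynamics. Verifying the orientation-dependent claim that the relevant tangencies are direct (contributing $+2$) rather than inverse, and pinning down the $N=0$ normalization in the type I case, are the two points I would treat with the greatest care; everything else is routine once the tubular-neighborhood model and Viro's formula are in place.
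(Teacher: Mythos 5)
Your proposal follows essentially the same route as the paper's proof: fix one generic perturbation $\tilde K$, evaluate $J^+(\tilde K)$ by the observation that each self-intersection of the underlying arc produces four double points of $\tilde K$ which are created or removed through one direct and one inverse self-tangency (this is exactly the argument of \cite[Proposition 2.14]{KimSZ} that the paper cites), and read off the constants $0,\tfrac12,2,1$ from the squared winding numbers generated by the near-collision loops. For brake-brake, brake-collision and type II collision-collision orbits your argument is sound and coincides with the paper's.

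The genuine gap is the type I case, and it sits exactly where you flag it. Your dynamical claim that ``reversibility of the Stark flow forces successive wraps at the same primary to reinforce'', so that $\lvert w_E(\tilde K)\rvert = 2$, is false: in the Euler problem the perturbations realized by nearby orbits of a type I collision-collision orbit have \emph{cancelling} wraps and $w_E(\tilde K)=0$. The paper's own Figure \ref{fig:ex1-1} exhibits a generic $T_{3,2}$-type orbit with $w_E(K)=w_M(K)=0$, and such orbits are precisely perturbations of the type I collision orbits on that torus. Both resolutions occur as legitimate perturbations, and their $J^+$-invariants differ: the reinforcing one ($w_E=\pm2$) has $J^+=2N$, while the cancelling one ($w_E=0$) has $J^+=2N+2$, its homotopy to the circle involving one additional crossing through a direct self-tangency. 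Only the combination $J^+ + \tfrac12 w_E^2 + \tfrac12 w_M^2$ is independent of the resolution; this compensation is what the paper imports from \cite{Kimcorr}, and your scheme cannot avoid it: either you prove the compensation (so that computing with your preferred resolution is justified), or you prove $J^+(\tilde K)=2N$ directly for the reinforcing model -- which is precisely the ``$N=0$ normalization in the type I case'' together with the direct-versus-inverse bookkeeping that you explicitly leave open. Note that no soft analogy with the other three cases can close this, since an equally legitimate perturbation of the same orbit has $J^+=2N+2$. As written, the proposal therefore does not establish $\mathcal{J}_0 = 2N+2$ for type I orbits; the remaining three cases are complete modulo the standard references.
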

\begin{proof} 
 In order to determine $J^+(K)$, it is sufficient -- as shown in \cite[Proposition 2.14]{KimSZ} -- to count the number of self-intersection points of $K$.  After a small perturbation each self-intersection point gives rise to four double points of a generic immersion. During a homotopy, this results in one crossing through a direct self-tangency and one crossing through an inverse self-tangency (possibly along with a finite number of crossings through triple points). Since such a perturbed curve is a generically homotoped to a circle, for a curve $K$ with $N$ self-intersection points, we have $J^+(K)=2N$.

It remains to determine the winding numbers. 
Let $\tilde K$ be a small perturbation of $K$. If $K$ is a brake-brake orbit or a brake-collision orbit, then the argument given in \cite[Proposition 2.14]{KimSZ} still applies. In particular, we obtain 
\[ 
w_E(K)^2 + w_M(K)^2 = \begin{cases} 0 & \text{ if $K$ is a brake-brake orbit},\\ 1 & \text{ if $K$ is a brake-collision orbit}. \end{cases}
\]

Suppose that $K$ is a collision-collision orbit of type I. In this case,  the value of $w_E(\tilde K) ^2 + w_M(\tilde K)^2 $ depends on the choice of the perturbation $\tilde K$ of $K$, and is either $0$ or $4$. As explained in \cite[Section 1]{Kimcorr}, the distinction  between these two cases lies in the behavior of the homotopy from $\tilde K$ to a circle: when the sum is $0$, there occurs an additional crossing through a direct self-tangency, which increases the  $J^+$-invariant by $2$. Therefore, $\mathcal{J}_0$ takes the same value in both cases.  
Now assume that $K$ is a collision-collision orbit of type II. It suffices to consider the four perturbations illustrated in Figure \ref{collision-collision-orbit1}. In each case, we find that $w_E^2(\tilde K) = w_M^2(\tilde K)=1$, so that $w_E^2(\tilde K) + w_M^2(\tilde K)=2$. This finishes the proof.    
\end{proof}

\section{Euler's two-center problem} \label{Sec:lemni}

In this section, we review some basic facts of the Euler problem and collect several properties of lemniscate motions that will be used in the next section to calculate the invariants.  See \cite[Section 3]{WDR04} for further details. 

 As explained in the introduction, the Hamiltonian in elliptic coordinates \eqref{eq:ellipticco} takes the form 
\[
H = \frac{H_{\lambda} + H_{\nu}}{\cosh^2\lambda - \cos^2 \nu}
\]
where $H_\lambda = \frac{1}{2}p_\lambda^2 - \cosh \lambda$ and $H_\nu= \frac{1}{2}p_\nu^2 +(1-2\mu)\cos\nu.$ Define the function
\[
G =  \frac{H_{\lambda}\cos^2 \nu + H_{\nu}\cosh^2\lambda}{\cosh^2\lambda - \cos^2 \nu}
\]
A direct computation shows that $\{H, G\}=0$ and the differentials $dH$ and $dG$ are linearly independent almost everywhere, which implies that $G$ is a first integral of the system.

 The image of the energy-momentum map $(\lambda, \nu) \mapsto (G(\lambda, \nu), H(\lambda, \nu))$ consists of four regions (when $\mu \neq \frac{1}{2}$) or three regions (when $\mu=\frac{1}{2}$) in the lower-half $(G,H) =(g,c)$-plane, as illustrated in Figure \ref{fig:orbitstypes}. Each point in these regions corresponds to a Liouville torus. The boundaries of these regions are defined by the curves 
\begin{align*}
     \ell_{1,2} : c = g \pm (1-2\mu), \quad  \ell_3 : c = g-1, \quad \ell_4 : 4gc = -(1-2\mu)^2, \quad  \ell_5 : 4gc =-1 
\end{align*}
 along which the differentials $dH$ and $dG$ become linearly dependent. 
 \begin{figure}[ht]
     \centering
\begin{tikzpicture} [scale=0.5]
	\draw [fill=lightgray] plot [  tension=0.7] coordinates { (-6,6)  (-2,6)  (-6, -3.6)};
	\draw [fill=lightgray] plot [  tension=0.7] coordinates {   (6,5) (6,-6) (-1,-6)   (1.5,0)   };	
	\draw[thick, name path=one] (1.5,0) [out=65, in=200] to (6, 5);
	 	\draw [name path=two] plot [ tension=0.7] coordinates {    (6,5) (6,-6) (-1, -6)  (1.5,0)    };
  \tikzfillbetween[   of=one and two,split ] {fill=lightgray};
\draw (-6,-6) to (-6,6);
\draw (-6,-6) to (6,-6);
\draw (6,-6) to (6,6);
\draw (6,6) to (-6,6);
\draw[thick ] (4,6) to (-1,-6);
\draw[thick ] (2,6) to (-3,-6);
\draw[thick ] (-2,6) to (   -6,-3.6);
\draw[thick  ] (0.25, -3) [out=88, in=255] to ( 0.75, 3);
\draw[thick ] (1.5,0) [out=65, in=200] to (6, 5);
      \draw[fill]  (0.25,-3) circle [radius=0.07];
     \draw[fill]  (0.75,3) circle [radius=0.07];
     \draw[fill]  (1.5,0) circle [radius=0.07];
\node at (0,6.7) {$g$};
\node at (-7,0) {  $c$};
%\draw (-6,6) to (-6.2,6);
%\draw (-6,3) to (-6.2,3);
%\draw (-6,0) to (-6.2,0);
 \draw[dashed] (-0.25,-3) to (-6.2,-3);
%\node at (-6.7,0) {  $c_1$};
%\node at (-6.6,3) {  $c_2$};
\node at (-6.7,-3) {  $c_{J}$};
\node at (-6.5, 6) {$0$};
\node at (-2.5,0) {  $S'$};
\node at (-1.2,-4) {  $S$};
\node at (1.5,2.3) {   $L$};
\node at (4.8,5.3) {  $P$};

% \node at  (0.7,-5) [right]{\footnotesize II};
%  \node at  (1.25,-3) [right]{\footnotesize IV, V};
% \node at  (1.8,-1.5) [right]{\footnotesize  VIII c)--d)  };
% \node at  (4.5, 3.3) [below] {\footnotesize $\substack{\mbox{VIII}' \\ \mbox{a)--d)}}$};
%\node at   (-4.3, -2)[right] {\footnotesize I};
%\node at    (-3.6, -5) [left] {\footnotesize I};
%\node at (0.6,5) [left] {\footnotesize I, X};
 
% \node at (3.65, 1) [below] {\footnotesize $\substack{\mbox{VIII c)--d),}\\ \mbox{   IX}}$};
% \node at (-1,2.8) [above] {\footnotesize VI, VII};
  \end{tikzpicture}
    \caption{Non-symmetric case $\mu \neq \frac{1}{2}$}
    \label{fig:orbitstypes}
 \end{figure} 
We label the regions by $S',S$(satellite), $L$(lemniscate), and $P$(planetary). 
Note that the curves $\ell_3$ and $\ell_4$ intersects precisely at the critical value $c = c_J, $  and the regions $P$ and $L$ exist only for $c>c_J.$ The nature of the motion in each region is as follows:  
\begin{itemize}

\item Satellite motions ($S$ and $S'$-regions):   \\
In the $S$-region, the variables   take values in  intervals $\lambda \in [-\lambda_0,\lambda_0]$ and $\nu \in [-\nu_0,\nu_0]\cup[\pi-\nu_1,\pi+\nu_1] $  for some  $\lambda_0>0$ and $0<\nu_0, \nu_1<\pi ,$ showing that  the motion is localized near either primary.  In contrast, the $S'$-region corresponds to motion localized only near the   heavier primary.

\item Lemniscate motions ($L$-region):  \\
The variable  $\lambda$ is again bounded by $\lambda \in [-\lambda_0, \lambda_0],$ but $\nu$ is unrestricted.  Thus, the motion occurs within a single ellipse defined by  $\lambda = \lambda_0,$ and the trajectory traverses both primaries.

\item Planetary motions ($P$-region):  \\
There exist $0<\lambda_0<\lambda_1$ such that $\lambda \in [-\lambda_1,-\lambda_0]\cup[\lambda_0,\lambda_1],$ with $\nu$ again unrestricted. Therefore, the motion is confined between two ellipses   $\lambda = \lambda_0$ and $\lambda =\lambda_1,$ circling around both primaries.

\end{itemize}

From now on we pay attention to lemniscate motions.
Fix $\mu \in (0,1)$ and $c \in (c_J, 0)$.
Let $\gamma(t)= (\lambda(t), \nu(t), p_{\lambda}(t), p_{\nu}(t))$ be a Hamiltonian trajectory lying on the regularized energy level $F_{\mu,c}^{-1}(0)$, see \eqref{eq:FofEuler}. We denote by 
\[
\gamma_1(t) = (\lambda(t), p_{\lambda}(t)) \quad \text{ and } \quad  \gamma_2(t) = (\nu(t), p_{\nu}(t))
\]
the projections of $\gamma(t)$ onto the $(\lambda, p_\lambda)$-plane and the $(\nu, p_\nu)$-plane, respectively.

\begin{lemma}\label{lem:phaseportrait}  Let $\gamma(t)  = (\lambda(t), \nu(t), p_{\lambda}(t), p_{\nu}(t))\in F_{\mu,c}^{-1}(0)$ be a Hamiltonian trajectory such that $(\lambda(t), \nu(t))$ describes a lemniscate motion and $\gamma_1(t),\gamma_2(t)$ be as defined above.   Then the following assertions hold:
\begin{enumerate}
    \item The projection $\gamma_1(t)$ is a non-trivial simple closed curve that winds around the origin in the clockwise direction. It is symmetric with respect to both the $\lambda$- and $p_{\lambda}$-axes.

    \item  Along $\gamma_2(t)$ the sign of $p_{\nu}(t)$ is constant, i.e.~either    $p_\nu(t)>0$ or $p_\nu(t)<0$ for all $t$. Denote by $\gamma_2^\pm(t)$ the trajectory for which $\pm p_\nu(t)>0$. Along $\gamma_2^+(t)$, the function $v(t)$ is strictly increasing in $t$, while along $\gamma_2^-(t)$, it is strictly decreasing. The curves $\gamma_2^\pm(\R)$ are related by reflection with respect to  the $p_\nu$-axis.  Moreover, the restriction $\gamma_2^\pm(t)$ to $ t \in [0,2\pi] $ is symmetric with respect to $\nu =\pi$, and its restriction to $t \in [-\pi,\pi]$ is symmetric with respect to $\nu=0$, see Figure \ref{fig:phase}.
    \begin{figure}[h]
  \centering
  \includegraphics[width=0.6\linewidth]{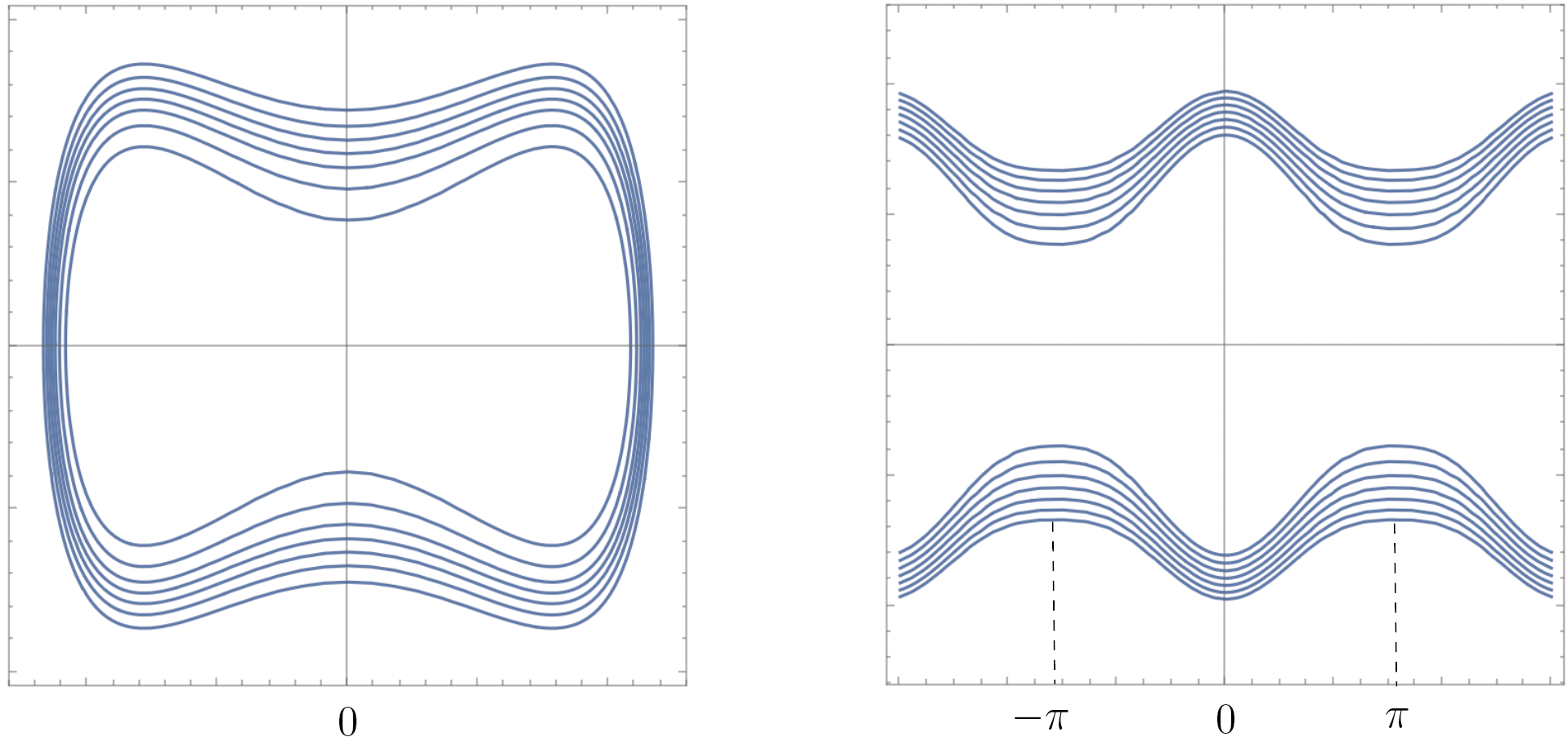}
 \caption{The projections $\gamma_1$ (left) and $\gamma_2$ (right) of $\gamma(t)$}
 \label{fig:phase}
\end{figure}
\end{enumerate}
\end{lemma}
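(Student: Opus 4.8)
The plan is to exploit the decoupling of the regularized Hamiltonian $F_{\mu,c} = F_\lambda + F_\nu$. Since $F_\lambda$ and $F_\nu$ are each conserved and their sum vanishes on $F_{\mu,c}^{-1}(0)$, along $\gamma$ one has $F_\lambda \equiv g$ and $F_\nu \equiv -g$ for a single constant $g \in \R$. Thus $\gamma_1$ is confined to the level set $\{F_\lambda = g\}$ of the one-degree-of-freedom system $F_\lambda(\lambda, p_\lambda) = \frac{1}{2}p_\lambda^2 + U_\lambda(\lambda)$ with $U_\lambda(\lambda) = -\cosh\lambda - c\cosh^2\lambda$, while $\gamma_2$ lies on $\{F_\nu = -g\}$ with $F_\nu(\nu, p_\nu) = \frac{1}{2}p_\nu^2 + W_\nu(\nu)$ and $W_\nu(\nu) = (1-2\mu)\cos\nu + c\cos^2\nu$. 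The lemma then reduces to reading off the phase portraits of these two planar systems in the lemniscate regime, which I would identify through the region analysis of the energy--momentum map described above.

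For part (1), I would first note that $U_\lambda$ is even in $\lambda$ and $F_\lambda$ is even in $p_\lambda$, so $\{F_\lambda = g\}$ is automatically invariant under both $\lambda \mapsto -\lambda$ and $p_\lambda \mapsto -p_\lambda$. Because $c < 0$, $U_\lambda(\lambda) \to +\infty$ as $|\lambda| \to \infty$, so the level set is compact. The defining feature of a lemniscate motion is that $\lambda$ oscillates over a symmetric interval $[-\lambda_0, \lambda_0]$ through $0$, which translates into $g > U_\lambda(0)$: the $p_\lambda$-axis is then met at $p_\lambda = \pm\sqrt{2(g - U_\lambda(0))} \neq 0$, and $p_\lambda = 0$ occurs only at the two (non-degenerate) turning points $\lambda = \pm\lambda_0$. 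Hence $\{F_\lambda = g\}$ is a regular compact curve meeting each vertical line $\{\lambda = \text{const}\}$, $|\lambda| < \lambda_0$, in the two points $p_\lambda = \pm\sqrt{2(g - U_\lambda(\lambda))}$, so it is a non-trivial simple closed curve; since the origin lies strictly between its two intersection points with the $p_\lambda$-axis, it is enclosed. The orientation is clockwise because Hamilton's equation $\dot\lambda = \partial F_\lambda/\partial p_\lambda = p_\lambda$ forces $\lambda$ to increase on $\{p_\lambda > 0\}$ and to decrease on $\{p_\lambda < 0\}$.

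For part (2), the lemniscate condition now manifests as the $\nu$-motion being \emph{rotational}, i.e.\ $\nu$ unrestricted on $S^1$. This forces $-g > \max_{S^1} W_\nu$, so that $\tfrac{1}{2}p_\nu^2 = -g - W_\nu(\nu) > 0$ for every $\nu$; hence $p_\nu$ never vanishes and, by continuity, retains a constant sign along $\gamma_2$, producing the two graphs $\gamma_2^\pm$ given by $p_\nu = \pm\sqrt{2(-g - W_\nu)}$ and exchanged by $p_\nu \mapsto -p_\nu$. Since $\dot\nu = p_\nu$ keeps a fixed sign, $\nu$ is strictly monotone along each branch. For the remaining reflection symmetries I would invoke the reversing symmetries of $F_\nu$: as $W_\nu$ is even both about $\nu = 0$ and about $\nu = \pi$, the maps $(\nu, p_\nu, t) \mapsto (-\nu, p_\nu, -t)$ and $(\nu, p_\nu, t) \mapsto (2\pi - \nu, p_\nu, -t)$ send solutions to solutions. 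Choosing the time origin so that $\gamma_2^+$ crosses $\nu = 0$ at $t = 0$ (and hence crosses $\nu = \pi$ at the midpoint of its $\nu$-period, by the symmetry about $\nu = \pi$), uniqueness of solutions then yields $\nu(-t) = -\nu(t)$ and the reflection about $\nu = \pi$, which are exactly the claimed symmetries of the restricted orbit.

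The main obstacle, I expect, is not any single computation but the bookkeeping needed to pin down precisely which energy regime $(g, c)$ corresponds to lemniscate motion and to confirm the global qualitative picture. In particular, $U_\lambda$ may develop a shallow central maximum at $\lambda = 0$ (the feature that separates the lemniscate and planetary regions), and one must check that in the lemniscate regime the level curve is nonetheless a single loop around the origin, and that the $\nu$-motion is genuinely rotational rather than librating. This is where I would lean on the description of the $L$-region and its boundary curves $\ell_i$, together with non-degeneracy of the turning points $\pm\lambda_0$, to guarantee that both level sets are regular and carry the stated topology. The time-parametrized symmetry statements for $\gamma_2$ are a secondary point of care, since they require tracking the reversing symmetries through the chosen normalization of $t$ rather than merely the symmetry of the image curve.
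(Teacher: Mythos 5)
Your proposal is correct, but note that the paper itself contains no argument for this lemma: its ``proof'' is a pointer to the literature (Section 4 of Vermeeren and Section 3 of Waalkens--Dullin--Richter), and what you have written out is essentially the self-contained version of the analysis carried out in those references. Your route is the natural one given how the paper sets things up: since $F_{\mu,c}=F_\lambda+F_\nu$ decouples, $F_\lambda\equiv g$ and $F_\nu\equiv -g$ are separately conserved, so $\gamma_1$ and $\gamma_2$ are level curves of the one-degree-of-freedom systems with potentials $U_\lambda(\lambda)=-\cosh\lambda-c\cosh^2\lambda$ and $W_\nu(\nu)=(1-2\mu)\cos\nu+c\cos^2\nu$; compactness from $c<0$, the characterization of the $L$-region by $g>U_\lambda(0)$ (single loop rather than the two planetary loops of the double well) and $-g>\max_{S^1}W_\nu$ (rotational rather than librational $\nu$-motion), the orientation from $\dot\lambda=p_\lambda$, monotonicity from $\dot\nu=p_\nu$, and the reversing symmetries plus uniqueness for the time-parametrized symmetry claims are all exactly the right ingredients, correctly assembled. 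One point where you silently (and correctly) sharpen the statement: the two branches $\gamma_2^\pm(\R)$ are exchanged by $(\nu,p_\nu)\mapsto(\nu,-p_\nu)$, as you prove, whereas each branch is individually \emph{invariant} under $\nu\mapsto-\nu$ because $W_\nu$ is even; so the lemma's phrase ``related by reflection with respect to the $p_\nu$-axis'' can only be read as momentum reversal, which is the reading your argument supplies. Your closing worry about pinning down the $(g,c)$ regime is not a gap, since the paper's own definition of lemniscate motion ($\lambda$ oscillating through $0$ on $[-\lambda_0,\lambda_0]$, $\nu$ unrestricted) is precisely the hypothesis your two inequalities encode.
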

\begin{proof}  See, for instance, \cite[Section 4]{Ver14} or \cite[Section 3]{WDR04}.
\end{proof}

If $\gamma$ is a brake-brake orbit or a brake-collision orbit, then there exists $t_0$ such that $(\lambda(t_0), \nu(t_0) )  $ lies on the boundary of Hill's region.  Equivalently, this means that $p_\lambda( t_0) = p_\nu( t_0)=0$. However, the previous lemma tells us that for lemniscate motions $p_\nu$ never vanishes. Therefore, we obtain the following assertion. 

\begin{corollary}\label{cor:nobrake}
A lemniscate motion does not admit braking points. Hence,   brake-brake orbits and brake-collision orbits cannot be realized as lemniscate motions. 
\end{corollary}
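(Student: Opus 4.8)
The plan is to read the statement as an immediate contradiction between the braking condition and Lemma \ref{lem:phaseportrait}(2). The latter asserts that along any lemniscate trajectory the momentum $p_\nu$ retains a fixed sign, so in particular $p_\nu(t)\neq 0$ for every $t$; the whole corollary will fall out of the incompatibility of this with the requirement $p_\nu=0$ that a braking point imposes.

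First I would make precise why a braking point forces $p_\nu$ to vanish. By definition a braking point is an instant $t_0$ at which the satellite's physical velocity is zero, equivalently at which $(\lambda(t_0),\nu(t_0))$ reaches the boundary $\partial\mathcal{K}_c$ of Hill's region, as recorded in the paragraph preceding the statement. For the Euler problem the magnetic term is absent, so $\dot q=p$ and zero velocity means $p_1(t_0)=p_2(t_0)=0$. Passing to elliptic coordinates through the relation $p_1\,dq_1+p_2\,dq_2=p_\lambda\,d\lambda+p_\nu\,d\nu$, the pair $(p_\lambda,p_\nu)$ is the image of $(p_1,p_2)$ under the transpose of the Jacobian of \eqref{eq:ellipticco}; hence $(p_1,p_2)=0$ yields $p_\lambda(t_0)=p_\nu(t_0)=0$ directly, with no nondegeneracy assumption required. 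In particular $p_\nu(t_0)=0$.

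The decisive step is then to confront this with Lemma \ref{lem:phaseportrait}(2), which gives $p_\nu(t)\neq 0$ for all $t$: the two are contradictory, so no braking point can occur on a lemniscate trajectory. Finally, since Definition \ref{def:threeorbits} requires a brake-brake orbit to meet $\partial\mathcal{K}_c$ at two points and a brake-collision orbit to meet it at (at least) one point, each of these orbit types would necessarily possess a braking point; the absence of such points therefore rules both of them out as lemniscate motions, which is exactly the assertion.

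There is no real obstacle here: the corollary is a formal consequence of Lemma \ref{lem:phaseportrait}(2), and the only content beyond that lemma is the translation of ``braking'' into the vanishing of $p_\nu$. The point at which one might expect trouble---degeneracy of the elliptic chart, where $\cosh^2\lambda-\cos^2\nu=0$---does not interfere, both because the implication we use ($p=0\Rightarrow p_\lambda=p_\nu=0$) needs no invertibility and because the degeneracy locus is exactly the two foci $E,M$, which lie at $\lambda=0$ and hence away from the braking locus $\lambda=\pm\lambda_0$ of a lemniscate motion.
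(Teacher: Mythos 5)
Your proposal is correct and follows exactly the paper's own argument: the paper likewise notes (in the paragraph preceding the corollary) that a braking point means the orbit reaches $\partial\mathcal{K}_c$, equivalently $p_\lambda(t_0)=p_\nu(t_0)=0$, which contradicts Lemma \ref{lem:phaseportrait}(2). Your additional care in justifying the passage from $p_1=p_2=0$ to $p_\lambda=p_\nu=0$ via the transpose Jacobian, and in checking the degeneracy locus of the elliptic chart, is a harmless elaboration of the same reasoning.
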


We now turn to study of collision-collision orbits that arise as lemniscate motions.

\begin{lemma}\label{lem:Lregion}
     On each $T_{k,l}$-torus corresponding to lemniscate motions, there exist precisely two collision-collision orbits, denoted by $K_1, K_2$. If $l$ is odd, then both orbits are of type II and related by reflection  with respect to  the $q_1$-axis.   
 If $l$ is even, then each orbit involves collisions with a single primary and is symmetric under reflection  with respect to  the $q_1$-axis.  See Figure \ref{fig:cc}.
 \begin{figure}[h]
  \centering
  \includegraphics[width=0.75\linewidth]{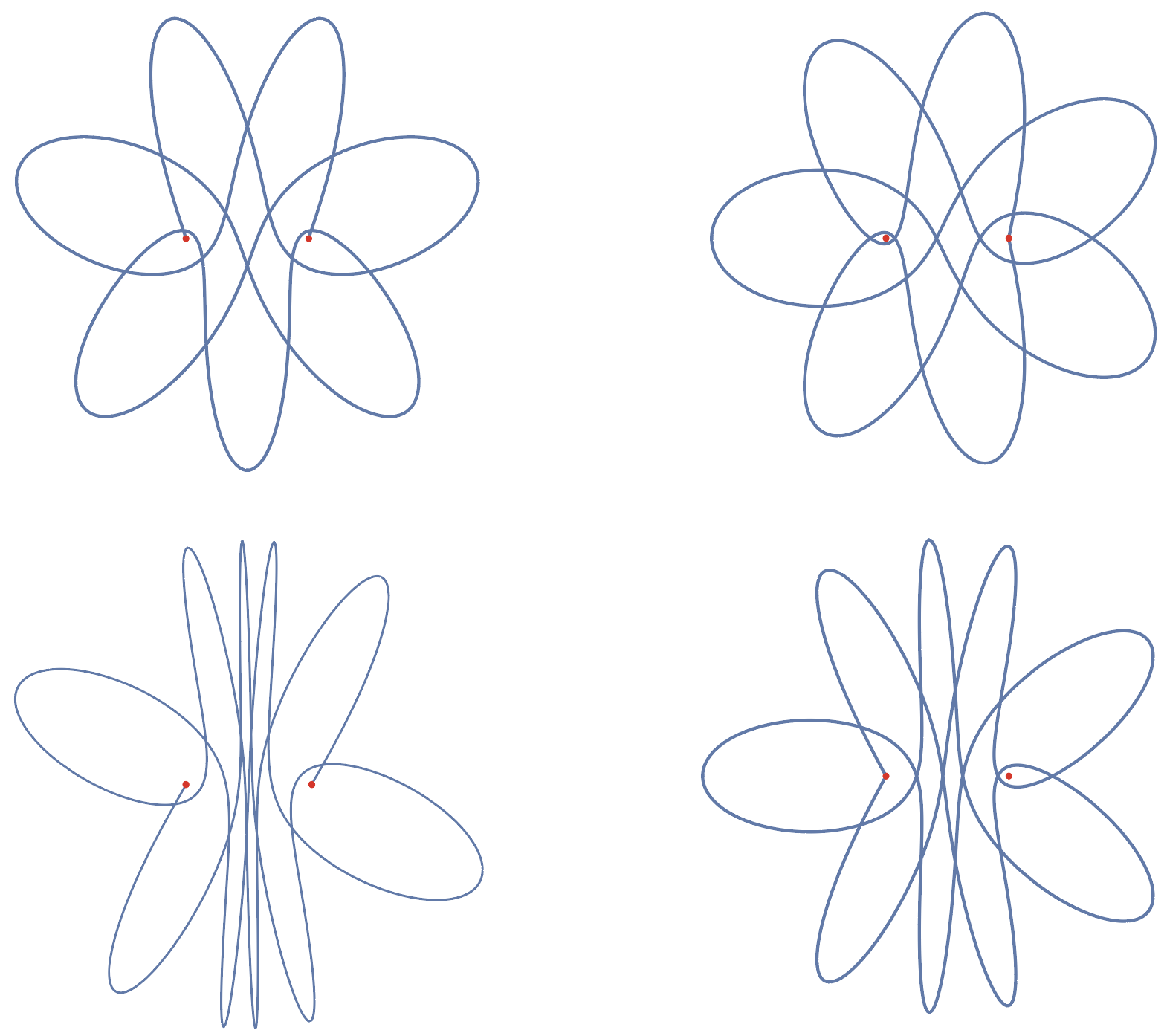}
 \caption{Examples of  collision-collision orbits in the Euler problem. (top left) $k=7, \; l=5$; (top right) $k=7,\;  l=6$; (bottom left) $k=10, \;  l=3$; (bottom right) $k=9,\; l=4$}
 \label{fig:cc}
\end{figure}
    
\end{lemma}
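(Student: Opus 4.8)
The plan is to pass to the angular description of the resonant Liouville torus, reduce the statement to an elementary count of points, and then read off the collision types and the reflection behaviour from parities. First I would pin down the collisions. In the coordinates \eqref{eq:ellipticco} a point $q$ equals a primary exactly when $\sinh\lambda\sin\nu = 0$ and $\cosh\lambda\cos\nu = \pm 1$; a short case analysis forces $\lambda = 0$, and then $\nu = 0$ gives $q = M = (1,0)$ while $\nu = \pi$ gives $q = E = (-1,0)$. Thus collisions occur precisely at $\lambda = 0$, $\nu \in \{0,\pi\}$, the value of $\nu$ recording which primary is hit. Next I would introduce angle coordinates $\theta_\lambda,\theta_\nu \in \R/2\pi\Z$ (rescaled time) on the $T_{k,l}$-torus. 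By Lemma \ref{lem:phaseportrait}(1) the projection $\gamma_1$ is a simple loop encircling the origin, so it crosses $\{\lambda=0\}$ exactly twice per period $T_\lambda$, and since $F_\lambda$ is even in $\lambda$ the two crossings are separated by $T_\lambda/2$; I normalize so that $\{\lambda = 0\} = \{\theta_\lambda \in \pi\Z\}$. By Lemma \ref{lem:phaseportrait}(2) the sign of $p_\nu$ is constant, so $\nu$ is strictly monotone and wraps once around $S^1$ per period $T_\nu$; using that $F_\nu$ is symmetric about $\nu = 0$ and about $\nu = \pi$ one checks that $\nu = 0$ and $\nu = \pi$ are hit once each, separated by $T_\nu/2$, so that $\{\nu \in \{0,\pi\}\} = \{\theta_\nu \in \pi\Z\}$. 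The collision set on the torus is then exactly the four points $\{0,\pi\}^2$; write $A = (0,0)$, $B = (\pi,0)$ (both $M$) and $C = (0,\pi)$, $D = (\pi,\pi)$ (both $E$).

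Then I would count. As the full period is $T = kT_\lambda = lT_\nu$, an orbit sweeps $\theta_\lambda$ through $2\pi k$ and $\theta_\nu$ through $2\pi l$, so parametrizing by $s \in [0,1]$ it is the line of slope $l/k$. Starting at any collision point, $\theta_\lambda \in \pi\Z$ forces $s \in \tfrac{1}{2k}\Z$ and $\theta_\nu \in \pi\Z$ forces $s \in \tfrac{1}{2l}\Z$; since $\gcd(k,l) = 1$ the intersection over $[0,1)$ is exactly $\{0,\tfrac12\}$. Hence each collision orbit meets the collision set in precisely two points, so the four points split into exactly two collision-collision orbits. To determine the type I would compute the second collision point, at $s = \tfrac12$, which relative to the start is $(\pi k,\pi l)\bmod 2\pi$: its $\nu$-label is controlled by $\pi l \bmod 2\pi$, equal to $0$ (same primary) when $l$ is even and $\pi$ (other primary) when $l$ is odd. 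This produces the pairings $\{A,B\},\{C,D\}$ when $l$ is even—each a single-primary orbit, i.e.\ type I in Definition \ref{def:threeorbits}—and, when $l$ is odd, either $\{A,C\},\{B,D\}$ (if $k$ even) or $\{A,D\},\{B,C\}$ (if $k$ odd), each mixing one collision at $E$ and one at $M$, hence type II.

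Finally, for the reflection I would lift $q_2 \mapsto -q_2$ to phase space. In elliptic coordinates it is $(\lambda,\nu,p_\lambda,p_\nu)\mapsto(-\lambda,\nu,-p_\lambda,p_\nu)$, which (via the redundancy $(\lambda,\nu,p_\lambda,p_\nu)\sim(-\lambda,-\nu,-p_\lambda,-p_\nu)$ equals $(\lambda,-\nu,p_\lambda,-p_\nu)$) preserves the sign of $p_\nu$, hence the torus. It acts as the identity on the $\nu$-factor and as rotation by $\pi$ on $\gamma_1$, i.e.\ as the translation $(\theta_\lambda,\theta_\nu)\mapsto(\theta_\lambda+\pi,\theta_\nu)$, swapping $A\leftrightarrow B$ and $C\leftrightarrow D$. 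When $l$ is odd this interchanges the two collision orbits, so they are reflections of one another; when $l$ is even it preserves each of $\{A,B\}$ and $\{C,D\}$, so each orbit is reflection-symmetric. This matches the assertions of the lemma.

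I expect the main obstacle to be the second step: rigorously extracting the angle coordinates and, in particular, the two half-period separations (of the two $\{\lambda = 0\}$ crossings and of the $\nu = 0,\pi$ crossings) from the qualitative phase portrait of Lemma \ref{lem:phaseportrait}, together with the careful bookkeeping of the elliptic-coordinate redundancy $(\lambda,\nu)\sim(-\lambda,-\nu)$ when computing the reflection's action on the torus. Once the torus model is in place, the point count and the parity analysis are routine.
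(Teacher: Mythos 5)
Your proof is correct and follows essentially the same route as the paper's: locate the collisions at $\lambda=0$, $\nu\in\{0,\pi\}$, use $\gcd(k,l)=1$ to force the second collision to occur exactly at half the period (whence precisely two collision-collision orbits), read off type I versus type II from the parity of $l$ (and $k$), and use the lift of the $q_1$-axis reflection to show it either preserves each orbit ($l$ even) or interchanges the two ($l$ odd). Your flat-torus angle-coordinate model is just a more explicit formalization of the paper's case analysis in Figure \ref{fig:four}, with your symplectic lift $(\lambda,\nu,p_\lambda,p_\nu)\mapsto(-\lambda,\nu,-p_\lambda,p_\nu)$ playing the role of the paper's anti-symplectic involution $\mathcal{R}$.
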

\begin{proof} 

We follow the arguments given in \cite[Proposition 3.5]{KimSZ} and \cite[Section 5]{Ver14}.

  Let $\gamma(t)$ be a $T$-periodic Hamiltonian trajectory corresponding to a lemniscate motion, lying on a $T_{k,l}$-torus.   Since the rotation number satisfies $R=k/l  $ (see the introduction for the definition of $R$), the projections $\gamma_1$ and $\gamma_2$ complete $k$ and $l$ full cycles, respectively. Consequently, the period $T$ satisfies $T = k T_\lambda= l T_\nu$.

Suppose that $\gamma(t)$ admits a collision. Without loss of generality, we may assume that the collision occurs at  $E$,  and reparametrize $\gamma(t)$   so that $\gamma(0)=  (0, -\pi, p_\lambda^0, p_\nu^0) $ with  $p_\lambda^0, p_\nu^0>0$. Because of Corollary \ref{cor:nobrake}, $\gamma$ is a collision-collision orbit. The  second collision then occurs at time $t=T/2$, and  there are  four possible configurations, illustrated   in Figure \ref{fig:four}.
\begin{figure}[h]
  \centering
  \includegraphics[width=0.9\linewidth]{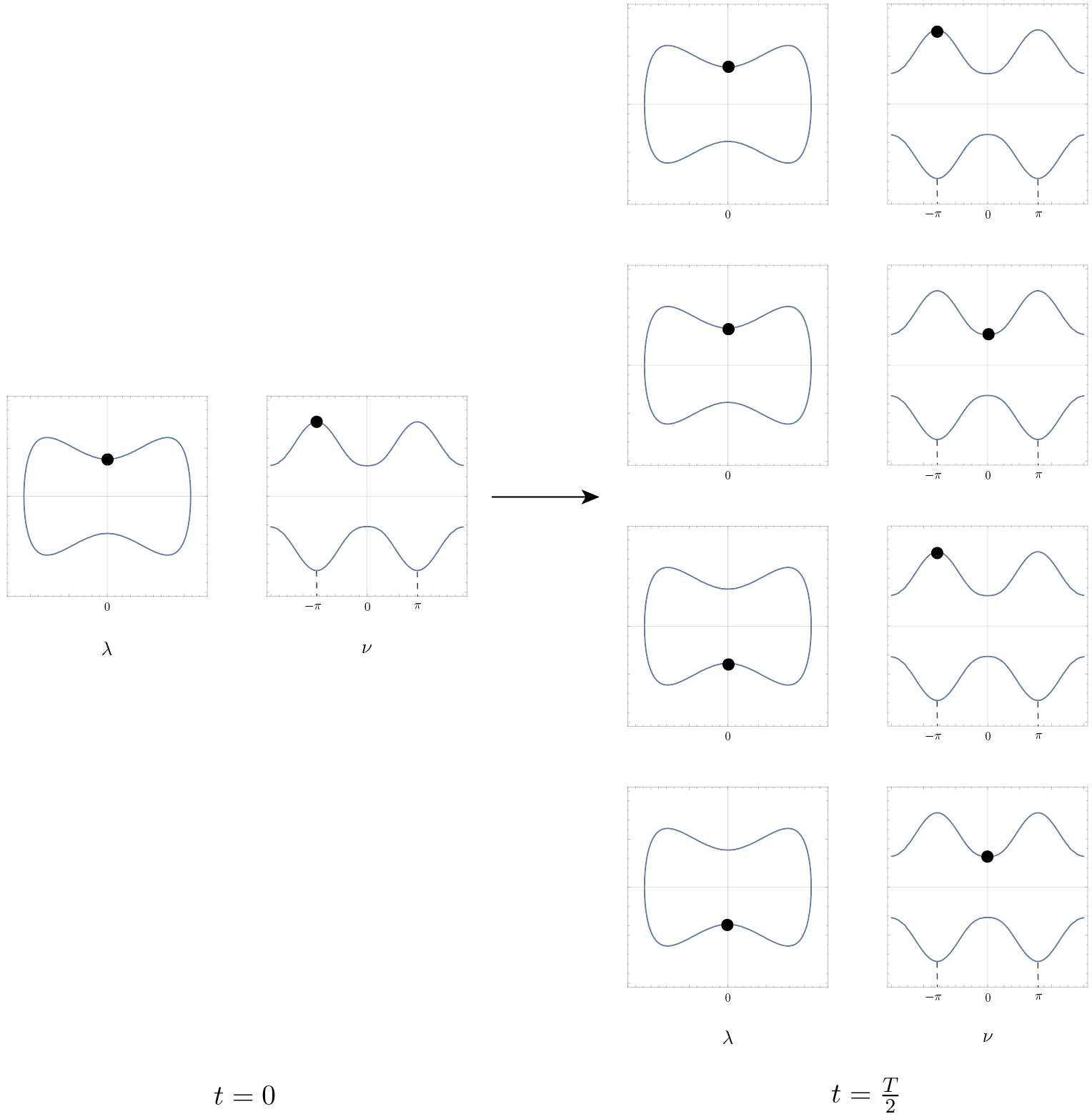}
 \caption{Four possible configurations for the second collision. The first case is excluded, as it contradicts the assumption gcd$(k,l)=1$.  The second and last  cases  correspond to simple covered collision-collision orbits of type II. The third case indicates a double covered collision-collision orbit of type I}
 \label{fig:four}
\end{figure}
The first case is excluded since $k$ and $l$ are assumed to be relatively prime. 

In the second case, we have $k$ is even and $l$ is odd, and the trajectory $\gamma$ collides with $M$ at time $t =T/2$. Hence, $\gamma$ is a simple covered $T$-periodic collision-collision orbit of type II. Since the Hamiltonian $F_{\mu,c}$ is invariant under the anti-symplectic involution $\mathcal{R} \colon (\lambda, \nu, p_\lambda, p_\nu) \mapsto (-\lambda, \nu, p_\lambda, -p_\nu)$, which corresponds to reflection in the  $q_1$-axis,  it follows that $\mathcal{R}(\gamma)$ is also a simple covered $T$-periodic collision-collision orbit of type II.   

In the third case, $k$ is odd and $l$ is even, and $\gamma$ collides again with $E$ at time $t =T/2$. This implies that $\gamma$ is a double-covered $T$-periodic collision-collision orbit of type I. Moreover, this orbit is invariant under the involution $\mathcal{R}$. A similar argument applies if the initial condition is assumed to occur at $M$ instead of $E$.  

In the last case, where both $k$ and $l$ are odd, the situation is equivalent to the second case. 
This completes the proof.  
 \end{proof}

   \section{Calculations of the invariants}\label{sec:main}

   In this section, we compute the four Cieliebak-Frauenfelder-Zhao invariants  for the lemniscate-type  periodic motions in Euler's two-center problem.
   Since every $T_{k,l}$-torus is a Stark homotopy and contains a collision-collision orbit, see Lemma \ref{lem:Lregion}, it suffices to determine the invariants of these orbits. See Figures \ref{fig:exampleofc-cwithk4l3} and \ref{fig:exampleofc-cwithk5l4}.

\begin{figure}[ht]
\centering
\begin{tikzpicture} [scale=0.7]

\draw[gray] (-3,0) to (3,0);

 \begin{scope}[scale=0.9]

 \node[below, red] at (-1.5,-0.05) {\small $E$};
  \node[above, red] at (1.5, 0.05) {\small $M$};

% , postaction={decorate},  decoration={markings, mark=at position 0.4 with {\arrow{<}}}

\draw[thick]       (-1.5,0)  [out=100, in=180] to (-1.5,3); 
\draw[thick]       (-1.5, 3)  [out=0, in=180] to ( 1.5,-0.5); 
 \draw[thick]       (1.5, -0.5)  [out= 0, in=0] to (2.8,2); 
\draw[thick]       (2.8, 2)  [out=180, in= 0] to (-2.8,-2); 
\draw[thick]       (-2.8, -2)  [out=180, in=180] to (-1.5,0.5); 
\draw[thick]       (-1.5,0.5)  [out=0, in=180] to ( 1.5,-3); 
\draw[thick]       ( 1.5, -3)  [out= 0, in=280] to (1.5,0);

\end{scope}

\node at (0.7, 2) {  $K$};
       \filldraw[draw=red, fill=red] (-1.35,0) circle (0.1cm);
       \filldraw[draw=red, fill=red] (1.35,0) circle (0.1cm);

  \draw[thick, ->] (3.7,1.5) [out=60, in=200] to (6,3); 
\node[below] at (4.9,2.3) {\small $L_M^{-1}$};
\node[below] at (4.5,3.4) {\small $L_E^{-1}$};

\begin{scope}[yscale=-1]
  \draw[thick, ->] (3.7,1.5) [out=60, in=200] to (6 ,3); 
\node[below] at ( 4.9,1.4) {\small $B^{-1}$};
\end{scope}

%%%%%%%%%%%%%%%%%%%%%%%%%%%%%%%%%%%Levi-Civita%%%%%%%%%%%%%%%%%%%%%%%%%%%%%%

\begin{scope}[xshift=10cm, yshift=3cm]
\draw[gray] (-3.2,0) to (3.2,0);

\begin{scope}[scale=1.1]
  \draw[thick ]       (-2  ,0)  [out=110, in=180] to (-1.8,  2); 
   \draw[thick ]       (-1.8 ,2)  [out= 0, in=180] to ( 0.3, -2.2); 
  \draw[thick ]       (0.3 , -2.2)  [out= 0, in=180] to (1.7, 0.7); 
  \draw[thick ]       (1.7  ,0.7)  [out= 0, in=90] to (2.8, -0.1); 
    \draw[thick ]       (2.8  ,-0.1)  [out=270, in=0] to (2.2, -0.7); 
  \draw[thick ]       ( 2.2 ,-0.7)  [out=180, in=0] to (1.3, 2  ); 
  \draw[thick ]       (1.3  , 2 )  [out=180, in=0] to (-1.3, -2 ); 
  \draw[thick ]       (-1.3  ,-2)  [out=180, in=0] to (-2.2, 0.7); 
  \draw[thick ]       (-2.2  ,0.7)  [out=180, in=90] to (-2.8,  0.1); 
  \draw[thick ]       (-2.8  ,0.1)  [out=270, in=180] to (-1.7, -0.7); 
  \draw[thick ]       (-1.7  , -0.7)  [out=0, in=180] to (-0.3,  2.2); 
  \draw[thick ]       (-0.3  ,  2.2)  [out= 0, in=180] to (1.8, -2); 
  \draw[thick ]       ( 1.8  ,-2)  [out= 0, in=290] to (2,0); 

\end{scope}

%\node at (2.5, 2.5 ) {  $\tilde{K}_E$};

   \filldraw[draw=red, fill=red] (-2.2 ,0) circle (0.1cm);
       \filldraw[draw=red, fill=red] (2.2,0) circle (0.1cm);
%\node[below, red] at (-2,-0.05) {\small $M_1$};
%\node[below, red] at (2,-0.05) {\small $M_2$};

\end{scope}

%%%%%%%%%%%%%%%%%%%%%%%%%%%%%%%%%%%Birkhoff%%%%%%%%%%%%%%%%%%%%%%%%%%%%%%

\begin{scope}[xshift=10cm, yshift=-4cm]

\draw[gray] (-3,0) to (3,0);

   \begin{scope}[scale=0.9]
        \draw[thick ]       (  1.5  ,0 )  [out= 300, in=0] to ( 1.2 ,-3); 
  \draw[thick ]       (  1  .2 , -3)  [out= 180, in=270] to (- 1,-0.5); 
  \draw[thick ]       ( - 1, -0.5)  [out=  90, in=180] to ( 2, 1.5); 
  \draw[thick ]       (  2,  1.5)  [out= 0, in=90] to ( 3, 0.5); 
     \draw[thick ]       (  3,  0.5)  [out= 270, in=300] to (-1.5,0);

     \draw[thick ]       ( -1.5  ,0 )  [out= 120, in=180] to (-1.2 ,3); 
  \draw[thick ]       ( -1  .2 , 3)  [out= 0, in=90] to (1,0.5); 
  \draw[thick ]       (1 , 0.5)  [out= 270, in=0] to (-2,-1.5);

  \draw[thick ]       ( -2, -1.5)  [out= 180, in=270] to (-3,-0.5); 
     \draw[thick ]       ( -3, -0.5)  [out= 90, in=120] to (1.5,0); 
  %\draw[thick ]       ( 2, 1.5)  [out= 0, in=90] to (3,0.5); 
  %\draw[thick ]       ( 3, 0.5)  [out= 270, in=0] to ( 2,-1.5); 

   \end{scope}

%\node at (2.5, 2.5 ) {  $\tilde{K}$};

   \filldraw[draw=red, fill=red] (-1.35,0) circle (0.1cm);
       \filldraw[draw=red, fill=red] (1.35,0) circle (0.1cm);
%       \node[below, red] at (-2,-0.05) {\small $-1$};
%\node[below, red] at (2,-0.08) {\small $1$};

\end{scope}

%\node[below] at (1.9,-0.05) {\small $E$};
   % \draw[thick] (2,0) [out=60, in=180] to (3.1,0.7); 
      % \filldraw[draw=black, fill=black] (2,0) circle (0.07cm);

      \end{tikzpicture}
      \caption{An example of a  collision-collision orbit of type II lying on a $T_{4,3}$-torus (left), together  with its regulariazions via the Levi-Civita map (top right) and  the Birkhoff map (bottom right) }
      \label{fig:exampleofc-cwithk4l3}
 \end{figure}
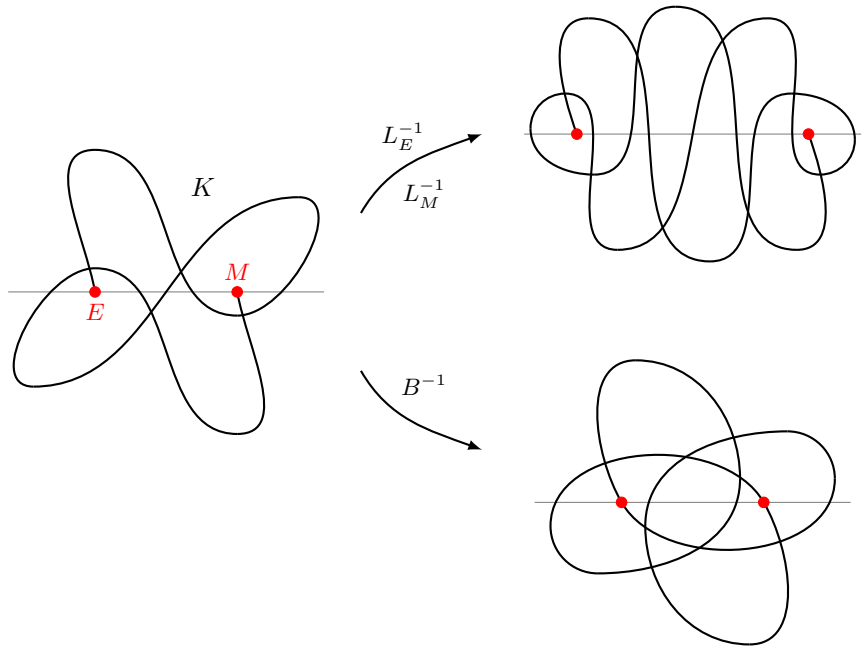

 %%%%%%%%%%%%%%%%%%%%%%%%%%%%%%%%%%%%%%%%%%%%%%%%%%%%%%%%%%%%%%%%%%%%%%%%%%%%%%%%%%%%%%%%%%%%%%%%%%%%%%%%% %%%%%%%%%%%%%%%%%%%%%%%%%%%%%%%%%%%%%%%%%%%%%%%%%%%%%%%%%%%%%%%%%%%%%%%%%%%%%%%%%%%%%%%%%%%%%%%%%%%%%%%%%%%%%%%%%%%%%%%%%%%%%%%%%%%%%%%%%%%%%%%%%%%%%%%%%%%%%%%%%%%%%%%%%%%%%%%%%%%%%%%%%%%%%%%%%%%%%%%%%%%%%%%%%%%%%%%%%%%%%%%%%%%%%%%%%%%%%%%%%%%%%%%%%%%%%%%%%%%%%%%%%%%%%%%%%%%%%%%%%%%%%%%%%%%%%%%%%%%%%%%%%%%%%%%%%%%%%%%%%%%%%%%%%%%%%%%%%%%%%

 %%%%%%%%%%%%%%%%%%%%%%%%%%%%%%%%%%%%%%%%%%%%%%%%%%%%%%%%%%%%%%%%%%%%%%%%%%%%%%%%%%%%%%%%%%%%%%%%%%%%%%%%% %%%%%%%%%%%%%%%%%%%%%%%%%%%%%%%%%%%%%%%%%%%%%%%%%%%%%%%%%%%%%%%%%%%%%%%%%%%%%%%%%%%%%%%%%%%%%%%%%%%%%%%%%%%%%%%%%%%%%%%%%%%%%%%%%%%%%%%%%%%%%%%%%%%%%%%%%%%%%%%%%%%%%%%%%%%%%%%%%%%%%%%%%%%%%%%%%%%%%%%%%%%%%%%%%%%%%%%%%%%%%%%%%%%%%%%%%%%%%%%%%%%%%%%%%%%%%%%%%%%%%%%%%%%%%%%%%%%%%%%%%%%%%%%%%%%%%%%%%%%%%%%%%%%%%%%%%%%%%%%%%%%%%%%%%%%%%%%%%%%%%

%%%%%%%%%%%%%%%%%%%%%%%%%%%%%%

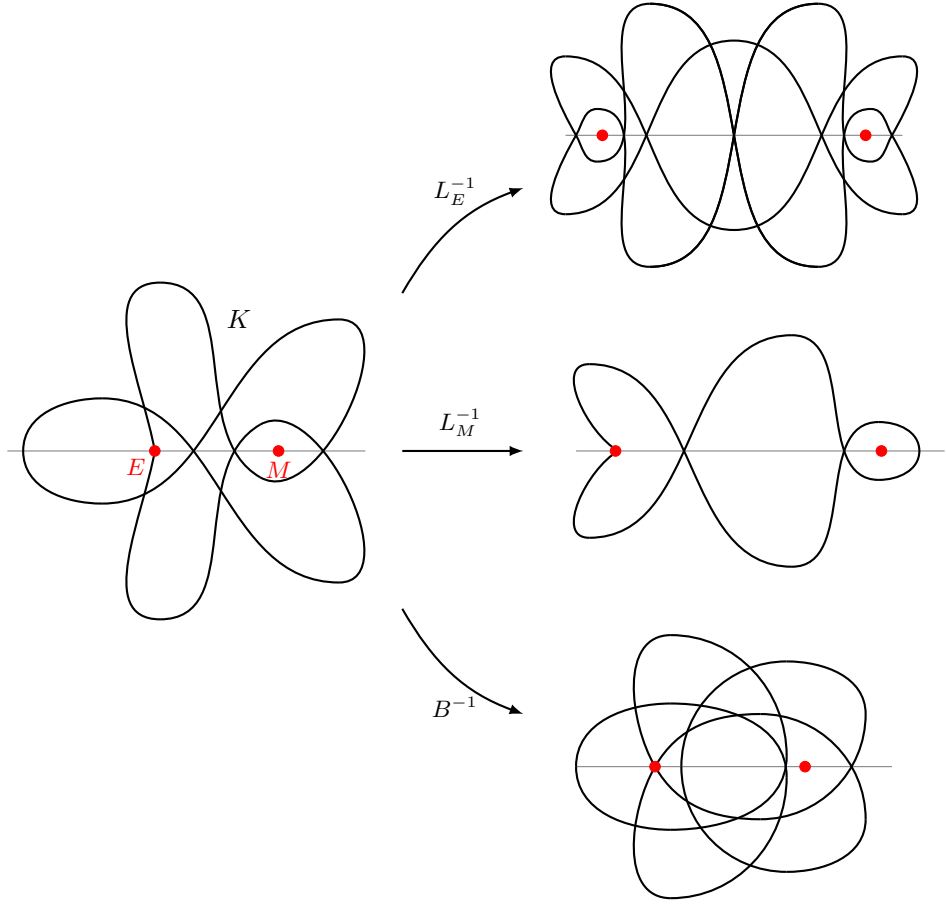
\begin{figure}[ht]
\centering
\begin{tikzpicture} [scale=0.7]

\draw[gray] (-3.8,0) to (3,0);

 %\begin{scope}[scale=0.9]

% , postaction={decorate},  decoration={markings, mark=at position 0.4 with {\arrow{<}}}

\draw[thick]       (-1,0)  [out=100, in=180] to (-0.9 ,3.2); 
 \draw[thick]       (-0.9,3.2)  [out= 0, in=150] to ( 1, -0.5); 
\draw[thick]       ( 1,-0.5)  [out=330, in=0] to (2.5 ,2.5); 
\draw[thick]       (2.5,2.5)  [out=180, in=0] to (-2 ,-1); 
\draw[thick]       (-2, -1)  [out=180, in=270] to (-3 .5,0); 

\begin{scope}[yscale=-1]

\draw[thick]       (-1,0)  [out=100, in=180] to (-0.9 ,3.2); 
 \draw[thick]       (-0.9,3.2)  [out= 0, in=150] to ( 1, -0.5); 
\draw[thick]       ( 1,-0.5)  [out=330, in=0] to (2.5 ,2.5); 
\draw[thick]       (2.5,2.5)  [out=180, in=0] to (-2 ,-1); 
\draw[thick]       (-2, -1)  [out=180, in=270] to (-3 .5,0);

\end{scope}

%\end{scope}

\node at (0.6, 2.5) {  $K$};
       \filldraw[draw=red, fill=red] (-1,0) circle (0.1cm);
       \filldraw[draw=red, fill=red] (1.35,0) circle (0.1cm);
 \node[left, red] at (-1,-0.3) {\small $E$};
  \node[below, red] at (1.35, -0.03) {\small $M$};

  \draw[thick, ->] (3.7,3) [out=60, in=200] to (6,5); 
  \node[above] at (4.7,4.5) {\small $L_E^{-1}$};

  \draw[thick, ->] (3.7,0)  to (6,0); 
\node[above] at (4.8,0.1) {\small $L_M^{-1}$};

\begin{scope}[yscale=-1]
  \draw[thick, ->] (3.7,3) [out=60, in=200] to (6,5); 
\node[below] at ( 4.7,4.5) {\small $B^{-1}$};
\end{scope}

%%%%%%%%%%%%%%%%%%%%%%%%%%%%%%%%%%%Levi-Civita L_E%%%%%%%%%%%%%%%%%%%%%%%%%%%%%%

\begin{scope}[xshift=10cm, yshift=6cm]
\draw[gray] (-3.2,0) to (3.2,0);

%\begin{scope}[scale=1.1]
 
  \draw[thick ]       (-3  ,0)  [out=120, in=180] to (-3.2,  1.5);  
  \draw[thick ]       (-3.2  ,1.5)  [out=0, in=180] to (0,  -1.8); 
   \draw[thick ]       (0, -1.8)  [out= 0, in=180] to ( 3.2 ,  1.5); 
  \draw[thick ]       ( 3 .2 , 1.5)  [out=0, in=60] to (3,  0); 
  \draw[thick ]       (3  ,0)  [out=240, in=0] to (2.6 ,  -0.5); 
  \draw[thick ]       (2.6  ,-0.5)  [out=180, in=0] to (1 .6,  2.5); 
  \draw[thick ]       (1.6 , 2.5)  [out=180, in=0] to (-1.6,  -2.5); 
 % \draw[thick ]       (-1.8  , -2.5 ) [out=180, in=0] to (-2 ,  0.5); 
  %\draw[thick ]       (-2   ,0.5)  [out=180, in=240] to ( -2.5, 0); 
    \draw[thick ]       (-3  ,0)  [out=300, in=180] to (-2.6 ,  -0.5);  
  \draw[thick ]       (-2.6  ,-0.5)  [out=0, in=180] to (-1.6, 2.5);  
  \draw[thick ]       (-1.6, 2.5)  [out=0, in=180] to (1.6,  -2.5);  

\begin{scope}[yscale=-1]
  \draw[thick ]       (-3  ,0)  [out=120, in=180] to (-3.2,  1.5);  
  \draw[thick ]       (-3.2  ,1.5)  [out=0, in=180] to (0,  -1.8); 
   \draw[thick ]       (0, -1.8)  [out= 0, in=180] to ( 3.2 ,  1.5); 
  \draw[thick ]       ( 3 .2 , 1.5)  [out=0, in=60] to (3,  0); 
  \draw[thick ]       (3  ,0)  [out=240, in=0] to (2.6 ,  -0.5); 
  \draw[thick ]       (2.6  ,-0.5)  [out=180, in=0] to (1 .6,  2.5); 
  \draw[thick ]       (1.6 , 2.5)  [out=180, in=0] to (-1.6,  -2.5); 
 % \draw[thick ]       (-1.8  , -2.5 ) [out=180, in=0] to (-2 ,  0.5); 
  %\draw[thick ]       (-2   ,0.5)  [out=180, in=240] to ( -2.5, 0); 
    \draw[thick ]       (-3  ,0)  [out=300, in=180] to (-2.6 ,  -0.5);  
  \draw[thick ]       (-2.6  ,-0.5)  [out=0, in=180] to (-1.6, 2.5);  
  \draw[thick ]       (-1.6, 2.5)  [out=0, in=180] to (1.6,  -2.5);  
\end{scope}

%\end{scope}

%\node at (2.5, 2.5 ) {  $\tilde{K}_E$};

   \filldraw[draw=red, fill=red] (-2.5  ,0) circle (0.1cm);
       \filldraw[draw=red, fill=red] (2.5 ,0) circle (0.1cm);
%\node[below, red] at (-2,-0.05) {\small $M_1$};
%\node[below, red] at (2,-0.05) {\small $M_2$};

\end{scope}

%%%%%%%%%%%%%%%%%%%%%%%%%%%%%%%%%%%Levi-Civita L_M%%%%%%%%%%%%%%%%%%%%%%%%%%%%%%

\begin{scope}[xshift=10cm, yshift=0cm]
\draw[gray] (-3 ,0) to (4,0);

\begin{scope}[scale=1.1]
  \draw[thick ]       (-2  ,0)  [out=150, in=180] to (-2.5,  1.5); 
  \draw[thick ]       (-2.5  ,1.5)  [out= 0, in=180] to (1 ,  -2); 
  \draw[thick ]       (1  , -2)  [out= 0, in=180] to (2.5,  0.5); 
  \draw[thick ]       (2.5  , 0.5)  [out= 0, in=90] to (3.2,  0); 

\begin{scope}[yscale=-1]
  \draw[thick ]       (-2  ,0)  [out=150, in=180] to (-2.5,  1.5); 
  \draw[thick ]       (-2.5  ,1.5)  [out= 0, in=180] to (1 ,  -2); 
  \draw[thick ]       (1  , -2)  [out= 0, in=180] to (2.5,  0.5); 
  \draw[thick ]       (2.5  , 0.5)  [out= 0, in=90] to (3.2,  0); 
\end{scope}

\end{scope}

%\node at (2.5, 2.5 ) {  $\tilde{K}_E$};

   \filldraw[draw=red, fill=red] (-2.25 ,0) circle (0.1cm);
       \filldraw[draw=red, fill=red] (2.8,0) circle (0.1cm);
%\node[below, red] at (-2,-0.05) {\small $M_1$};
%\node[below, red] at (2,-0.05) {\small $M_2$};

\end{scope}

%%%%%%%%%%%%%%%%%%%%%%%%%%%%%%%%%%%Birkhoff%%%%%%%%%%%%%%%%%%%%%%%%%%%%%%

\begin{scope}[xshift=10cm, yshift=-6cm]

\draw[gray] (-3,0) to (3,0);

%   \begin{scope}[scale=0.9]
        \draw[thick ]       (  -1.5  ,0 )  [out= 120, in=180] to (- 1.2 , 2.5); 
        \draw[thick ]       (  -1.2  ,2.5)  [out=  0, in=90] to (  1  ,  0.2); 
        \draw[thick ]       ( 1, 0.2 )  [out=  270, in=0] to (  -1.2  ,  -1.2); 

        \draw[thick ]       (  -1.2  , -1.2 )  [out=  180, in=270] to (  -3  ,  0 ); 
        \draw[thick ]       (  -3, 0  )  [out=  90, in=180] to ( -1.2  ,  1.2); 
        \draw[thick ]       (  -1.2  , 1.2 )  [out=  0, in=90] to (  1  ,  -0.2); 
        \draw[thick ]       (  1  , -0.2 )  [out=  270, in=0] to (  -1.2  , - 2.5); 
        \draw[thick ]       (  -1.2  ,-2.5)  [out=  180, in=240] to (  -1.5  ,  0 ); 

        \draw[thick ]       (  -1.5  , 0 )  [out=  60, in=180] to (  0.5  ,  1); 
        \draw[thick ]       (  0.5, 1)  [out=  0, in=90] to (  2.5  , -1  ); 
        \draw[thick ]       (   2.5,-1   )  [out=  270, in= 0] to (  1  , -2); 
        \draw[thick ]       (   1, -2 )  [out=  180, in= 270] to (  -1  ,0); 
        \draw[thick ]       (   -1,0 )  [out=  90, in= 180] to (  1  ,  2); 
        \draw[thick ]       (   1, 2 )  [out=  0, in= 90] to (  2.5,1 ); 
        \draw[thick ]       (   2.5,1 )  [out=  270, in= 0] to (  0.5  , -1); 
        \draw[thick ]       (   0.5, -1 )  [out=  180, in= 300] to (   -1.5,0);

  % \end{scope}

%\node at (2.5, 2.5 ) {  $\tilde{K}$};

%       \node[below, red] at (-2,-0.05) {\small $-1$};
%\node[below, red] at (2,-0.08) {\small $1$};

   \filldraw[draw=red, fill=red] (-1.5,0) circle (0.1cm);
       \filldraw[draw=red, fill=red] (1.35,0) circle (0.1cm);
\end{scope}

%\node[below] at (1.9,-0.05) {\small $E$};
   % \draw[thick] (2,0) [out=60, in=180] to (3.1,0.7); 
      % \filldraw[draw=black, fill=black] (2,0) circle (0.07cm);

      \end{tikzpicture}
            \caption{An example of a  collision-collision orbit of type I lying on a $T_{5,4}$-torus (left), together with its regularizations via the Levi-Civita map around $E$ (top right), the Levi-Civita map around $M$ (middle right), and the Birkhoff map (bottom right)}
      \label{fig:exampleofc-cwithk5l4}
 \end{figure}

   \subsection{The invariant $\mathcal{J}_0$}

   Let $K$ be a  collision-collision orbit lying on a $T_{k,l}$-torus. In view of Proposition \ref{prop:formula of collision orbits}, in order to determine $\mathcal{J}_0(K)$ it suffices to count self-intersection points along $K$.

   \begin{proposition}\label{prop:quadruple}
   Every collision-collision orbit $K$ lying on a $T_{k,l}$-torus has $N$ self-intersection points, where
   \[
   N= \begin{cases}\displaystyle  \frac{k(l-1) }{2} - \frac{1}{2}   & \text{ if $l$ is even,} \\ \\ \displaystyle \frac{k(l-1)}{2}   & \text{ if $l$ is odd.}\end{cases}
   \]
   \end{proposition}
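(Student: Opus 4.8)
The plan is to reduce the whole count to a lattice--point count on the single arc traced out between the two collisions, and the key enabling fact is that in the Euler problem the magnetic term vanishes. Since Euler's problem is a two--center Stark system ($\mathcal B\equiv 0$), a collision--collision orbit (Corollary \ref{cor:nobrake} excludes braking points, so these are the only distinguished orbits) reverses direction at each collision and retraces its path; hence its image coincides with that of an arc $\alpha$ running from one collision point to the other, traversed twice. Consequently the self--intersection points of $K$ are \emph{exactly} the transverse double points of $\alpha$ (the two endpoints of $\alpha$ are the collision points and are not counted), and it suffices to count these. Using Lemma \ref{lem:Lregion} I would reparametrize so that $\gamma(0)$ is a collision at $E$ and the second collision occurs at $t=T/2$, so that $\alpha$ is the image of $t\in[0,T/2]$.

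Next I would pass to the complex picture. In elliptic coordinates the configuration point is $q=q_1+iq_2=\cosh\lambda\cos\nu+i\sinh\lambda\sin\nu=\cosh(\lambda+i\nu)$, so writing $z(t)=\lambda(t)+i\nu(t)$ two parameters $t_1\neq t_2$ map to the same point of $\C$ iff $\cosh z(t_1)=\cosh z(t_2)$, i.e. iff $z(t_2)=z(t_1)+2\pi i n$ (\emph{Case A}) or $z(t_2)=-z(t_1)+2\pi i n$ (\emph{Case B}) for some $n\in\Z$. In coordinates, Case A reads $\lambda(t_1)=\lambda(t_2)$ together with $\nu(t_2)=\nu(t_1)+2\pi n$, and Case B reads $\lambda(t_2)=-\lambda(t_1)$ together with $\nu(t_1)+\nu(t_2)=2\pi n$.

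The third step is to linearize these four conditions using the symmetries in Lemma \ref{lem:phaseportrait}. Since $\gamma_2$ is monotone with $\nu(t+T_\nu)=\nu(t)+2\pi$, and (from the reflection symmetry about $\nu\in\pi\Z$ together with the collision at $t=0$) $\nu(-t)=-2\pi-\nu(t)$, one gets $\nu(t_2)=\nu(t_1)+2\pi n\iff t_2=t_1+nT_\nu$ and $\nu(t_1)+\nu(t_2)=2\pi n\iff t_1+t_2\in T_\nu\Z$. From Lemma \ref{lem:phaseportrait}(1) the curve $\gamma_1$ is simple and symmetric about both axes, so $\lambda$ is $T_\lambda$--periodic with $\lambda(t+T_\lambda/2)=-\lambda(t)$ and $\lambda(-t)=-\lambda(t)$; hence $\lambda(t_1)=\lambda(t_2)\iff t_2\equiv t_1$ or $t_1+t_2\equiv \tfrac12 T_\lambda\pmod{T_\lambda}$, and $\lambda(t_1)=-\lambda(t_2)\iff t_2\equiv -t_1$ or $t_2-t_1\equiv \tfrac12 T_\lambda\pmod{T_\lambda}$. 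Normalizing $T_\lambda=2$ (so $T_\nu=2k/l$, $T=2k$, and $\alpha$ is the image of $s\in[0,k]$), the self--intersections of $\alpha$ are in bijection with the unordered pairs $\{s_1,s_2\}\subset(0,k)$ satisfying one of
\[
\text{(A)}\quad s_2-s_1\in\tfrac{2k}{l}\Z\setminus\{0\},\ \ s_1+s_2\equiv 1\!\!\pmod 2;\qquad \text{(B)}\quad s_1+s_2\in\tfrac{2k}{l}\Z,\ \ s_2-s_1\equiv 1\!\!\pmod 2 .
\]
Here $\gcd(k,l)=1$ rules out the ``parallel'' subcases ($s_2\equiv s_1$ in A, resp. $s_1+s_2\equiv 0$ in B), and a short computation shows the two families are disjoint on the open arc, since a common pair would force $s_1,s_2\in k\Z$, i.e. the endpoints.

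Finally I would count. In Case A, for each $r$ with $1\le r\le\lfloor(l-1)/2\rfloor$ I set the gap $s_2-s_1=\tfrac{2k}{l}r$ and count the $s_1$ lying in a prescribed residue class modulo $1$ inside an interval of length $\tfrac{k(l-2r)}{l}$; in Case B, for each $r'$ with $1\le r'\le l-1$ I set $s_1+s_2=\tfrac{2k}{l}r'$ and count the admissible odd integer gaps $d=s_2-s_1$ in $\bigl(0,\min(\tfrac{2kr'}{l},\tfrac{2k(l-r')}{l})\bigr)$. Summing these (and checking on small cases such as $T_{2,3},T_{3,2},T_{4,3},T_{5,4}$) yields $N_A+N_B=\lfloor k(l-1)/2\rfloor$, which is $\tfrac{k(l-1)}{2}$ for $l$ odd and $\tfrac{k(l-1)}{2}-\tfrac12$ for $l$ even (recall $k$ is odd when $l$ is even, as $\gcd(k,l)=1$). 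The hard part will be precisely this lattice bookkeeping: evaluating the floor functions uniformly in $k,l,r$ and showing that the parity of $l$ produces exactly the $-\tfrac12$ correction (the delicate contributions being the pairs with $\lambda(s_1)=\lambda(s_2)=0$, i.e. double points forced onto the focal segment $\{q_2=0,\ |q_1|<1\}$, where Cases A and B degenerate). I also need to confirm that every coincidence is a transverse double point and that no triple points occur on $\alpha$, so that the number of admissible pairs genuinely equals the number of self--intersection points entering Proposition \ref{prop:formula of collision orbits}.
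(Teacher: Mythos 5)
Your reduction is correct as far as it goes, and it is formulated differently from the paper's proof: you encode the position as $q=\cosh(\lambda+i\nu)$, so coincidences become solutions of $z(s_2)=\pm z(s_1)+2\pi i n$, and the symmetries of Lemma \ref{lem:phaseportrait} (which do upgrade, by uniqueness of solutions with the symmetric initial data $\lambda(0)=0$, $\nu(0)=-\pi$, to the functional identities $\lambda(-t)=-\lambda(t)$, $\lambda(t+T_\lambda/2)=-\lambda(t)$, $\nu(-t)=-2\pi-\nu(t)$ that you assert) convert these into your two congruence families (A) and (B), treated uniformly in the parity of $l$. The paper has the same arithmetic core -- its Claim1, Claim2 and Claim5 also characterize coincidence times as lattice points $(2\alpha k+\beta l)T/4kl$ using $\gcd(k,l)=1$ -- but runs it separately for $l$ even (exploiting the $q_1$-symmetry of the type I orbit and splitting off the axis double points) and $l$ odd (reflecting the lower half and excluding the mixed intersections of $K^+$ with $K^-$). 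I verified that your families give the correct totals for $(k,l)=(2,3),(3,2),(4,3),(5,4)$.

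The genuine gap is that the count is never carried out. The content of the proposition \emph{is} the exact value of $N$, and your argument ends with ``summing these \dots yields $N_A+N_B=\lfloor k(l-1)/2\rfloor$,'' which you yourself identify as ``the hard part'' still to be done; asserting the total and checking small cases is not a proof. The uniform evaluation -- counting points of a fixed residue class mod $1$ in an interval of length $k(l-2r)/l$, and odd integers in $\bigl(0,\min(2kr'/l,\,2k(l-r')/l)\bigr)$, then summing over $r$ and $r'$ -- is precisely where the parities of $k$ and $l$ and the $-\tfrac12$ correction must emerge, and it is the counterpart of the paper's Claim2 and Claim5. A second, smaller gap is your last sentence: the bijection between unordered solution pairs and self-intersection points requires that no point of $\alpha$ is hit three times. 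This can be closed cheaply by the paper's Claim4 argument: every self-intersection lies on the intersection of a confocal ellipse $\lambda=\lambda_0$ with a confocal hyperbola $\nu=\nu_0$, and these meet in at most two points, so triple points cannot occur. (Transversality itself is not needed for this count; it only enters Proposition \ref{prop:formula of collision orbits}, whose proof perturbs the orbit.) As it stands, the proposal is a correct and promising reduction, but not a complete proof.
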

\noindent
\begin{proof}  We proceed following the approach of the proofs in \cite[Theorem 4.8]{Kimcorr} and \cite[Proposition 4.4]{KimSZ}.  

\smallskip

 \noindent \textit{Case 1.} $l$ is even.

We first assume that $l$ is even, and let $K$ denote the unique $q_1$-symmetric collision-collision orbit of type I which collides with $E$ and lies on a $T_{k,l}$-torus. Let $\gamma$ be a $T$-periodic parametrization of $K$ such that $\gamma(0) = (0,-\pi, p_\lambda^0, p_\nu^0)$ with $p_\lambda^0, p_\nu^0>0$, as in the proof of Lemma \ref{lem:Lregion}. From that proof, it follows that $K$ is a double covered $T$-periodic orbit, and thus it suffices to consider the restriction $\gamma|_{[0,T/2]}$. By abuse of notation, we shall use the same symbol $\gamma$ for the restriction and identify it with its image on the $q$-plane. Note that every self-intersection point of $\gamma$ corresponds to a double point.  

Since $\gamma$ intersects the $q_1$-axis if and only if $\lambda=0$, $\nu=0$ or $\nu=-\pi$ (mod $2\pi$), and since the variables $\lambda$ and $\nu$ complete $k/2$ and $l/2$ cycles along $\gamma$, respectively, it follows that $\lambda=0$ is attained precisely $k$ times, and $\nu=0$ and $\nu=-\pi$ are each attained precisely $l/2$ times, see Lemma \ref{lem:phaseportrait}. Therefore, in the interval $(0,T/2]$, the trajectory $\gamma$ intersects the $q_1$-axis exactly $k+l -1$ times. The subtraction of $1$ accounts for the point $(\lambda(T/2), \nu(T/2))=(0,-\pi) $, corresponding to the second collision at $E$; the remaining intersections are non-collisional. 
By examining the definition of the elliptic coordinates \eqref{eq:ellipticco}, it is easy to show that $\gamma$  intersects the $q_1$-axis perpendicularly if and only if, at the intersection point, either  $\lambda$ or $\nu$ attains its maximum or minimum. Since $\gamma$ is symmetric with respect to the $q_1$-axis, it follows that $\gamma$ has a single point on the $q_1$-axis at $t=T/4$  and $(k+l-3)/2$ double points. At the single point, we have
\[
\lambda = \begin{cases} \lambda_{\max} & \text{ if }   k \in 4 \N_0+1, \\ - \lambda_{\max} & \text{ if } k \in 4 \N_0+3,\end{cases} \quad \quad \nu  = \begin{cases}  - \pi & \text{ if } l \in 4 \N , \\ 0 & \text{ if } l \in 4 \N_0+2,                    \end{cases} 
\]
where $\lambda_{\max}>0$ denotes the unique value of $\lambda$ for which   $p_\lambda=0$.
Therefore, the single point at $t =T/4$ lies on the side opposite to $E$ or $M$, depending on whether $l \in 4 \N$ or if $l \in 4 \N_0+2$, respectively.
See Figure \ref{fig:ex1}.

\begin{figure}[h]
  \centering
  \includegraphics[width=0.6\linewidth]{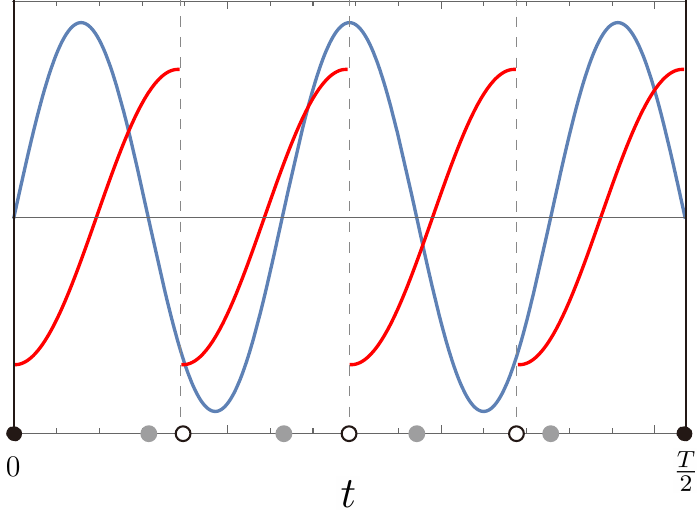}
 \caption{An example of the functions $\lambda$ (blue) and $\nu$ (red) for $(k,l)=(5,8)$. The black, gray, and white dots indicate, respectively, the collisions with $E$, the points where $\lambda=0$, and the points where $\nu=-\pi$ (mod $2\pi$). The middle white dot represents the single point}
 \label{fig:ex1}
\end{figure}

Let $\gamma^\pm  = \gamma \cap \{ \pm q_2 \geq 0\}$. Note that $\gamma^\pm$ are related by reflection with respect to the $q_1$-axis, and hence contain the same number of double points. Moreover, by construction,  neither $\gamma^+$ nor $\gamma^-$  has any double points lying on the $q_1$-axis. Hence, in order to determine the number of double points along $\gamma$, it is sufficient to count those along $\gamma^+$.
We may view $\gamma^+$ as a $T/4$-periodic trajectory starting at $E$ and ending at the single point.

 Let us now treat  $\lambda$ and $\nu$ as functions of time $t \in [0,T/4]$.   Reflecting the negative parts of their graphs with respect to the $q_1$-axis, we obtain
 \[
 (\lambda(T/4), \nu(T/4))=\begin{cases} (\lambda_{\max}, \pi ) & \text{ if } l \in 4 \N , \\  (\lambda_{\max}, 0 ) & \text{ if  }   l \in 4 \N_0+2. \end{cases}
 \]
    Along $\gamma^+$ the variable $\lambda$ performs $k$  quarter-cycles, where each   quarter-cycle corresponds to increase or decrease of $\lambda$ in $[0,\lambda_{\max}]$. Similarly, the variable $\nu$ undergoes $l/2$ half-cycles,  each representing increase or decrease of $\nu$ in $[0,\pi]$.

\medskip

\noindent
{\it {\bf Claim1.} If $t_0 \in (0,T/4)$ corresponds to a double point along $\gamma^+$, then there exist integers $\alpha, \beta \in \Z$ such that $t_0 = (2\alpha k + \beta l )T/4kl$.}

To prove the claim, let     $(\lambda_0 , \nu_0)= (\lambda(t_0), \nu ( t_0))$  for some $t_0 \in (0,T/4)$ be a double point of $\gamma^+$. Let $t_1 \in (0,T/4k)$ be the minimal positive time such that  $\lambda(t_1) = \lambda_0$. Then   $\lambda(t) = \lambda_0$ occurs at the following times:
   \[
  t= t_1, \frac{2T}{4k} \pm t_1, \frac{4T}{4k} \pm t_1, \ldots, \frac{(k-3)T}{4k} \pm t_1, \frac{(k-1)T}{4k} \pm t_1.
   \]
Similarly, let  $t_2 \in (0,T/2l)$ be the minimal positive time such that   $\nu(t_2) = \nu_0$. Then   $\nu(t) = \nu_0$ holds at:
\[
 t = \begin{cases} \displaystyle t_2 ,  \frac{4T}{4l} \pm t_2, \frac{8T}{4l} \pm t_2, \ldots, \frac{ (l-4)T}{4l}\pm t_2,  \frac{lT}{4l}- t_2 & \text{ if }  l \in 4\N, \\ \\  \displaystyle  t_2 ,  \frac{4T}{4l} \pm t_2, \frac{8T}{4l} \pm t_2, \ldots, \frac{(l-6)T}{4l}\pm t_2  ,\frac{(l-2)T}{4l}\pm t_2   & \text{ if }  l \in 4\N_0+2. \end{cases}
 \]
 See Figure \ref{fig:ex22}.

\begin{figure}[h]
  \centering
  \includegraphics[width=0.9\linewidth]{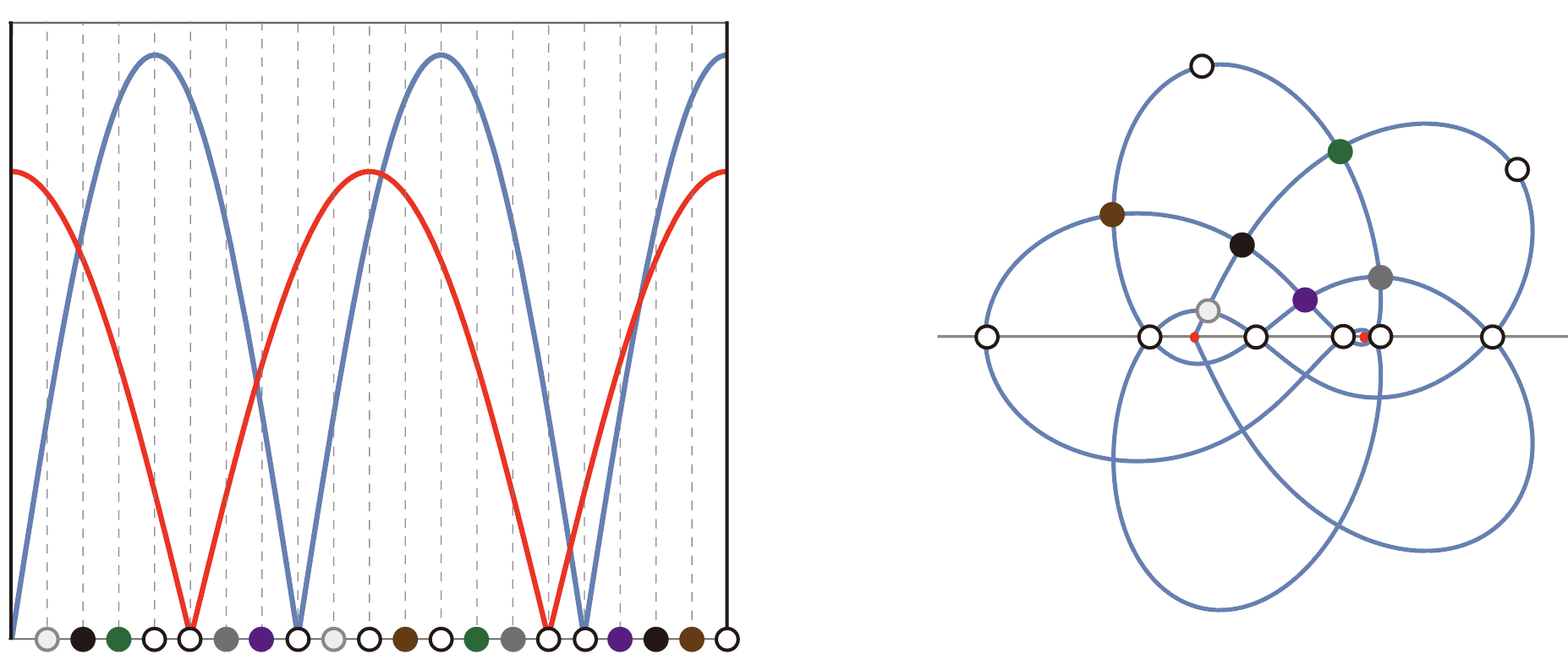}
 \caption{The case $(k,l)=(5,8)$.  Left: The graphs of $\lambda= \lambda(t)$ and $\nu=\nu(t)$ are shown in blue and red,  respectively. The interval $[0,T/4]$ is divided into $20$ subintervals by dashed vertical lines. The white dots indicate the maxima and minima of $\lambda$ or $\nu$, and each pair of the colored dots represents a double point of $\gamma^+$;  Right:  A collision-collision orbit on a $T_{5,8}$-torus }
 \label{fig:ex22}
\end{figure}

We first consider the case $l \in 4 \N$. Suppose   that  for some $0\leq j \leq (k-1)/2$ and $0\leq i \leq l/4-1$, the time $t_0 \in (0, T/4)$ satisfies 
\begin{equation}\label{eq:t0}
t_0 = \frac{ 2j}{4k}T + t_1 = \frac{  4i}{4l}T + t_2,
\end{equation}
where $t_1 \in (0, T/4k)$ and $t_2 \in (0, T/2l)$ are the minimal times such that $\lambda(t_1)=\lambda_0$ and $\nu(t_2) = \nu_0$, respectively, as above. Then, rearranging the equation we obtain
\[
t_1 = \frac{   2( 2ik - jl)}{4kl}T + t_2.
\]

%%%%%%%%%%%%
%%%%%%%%%%%%
%%%%%%%%%%%%
%%%%%%%%%%%%
%%%%%%%%%%%%
  
%%%%%%%%%%%%
%%%%%%%%%%%%
%%%%%%%%%%%%
%%%%%%%%%%%%
%%%%%%%%%%%%

Since $(\lambda_0, \nu_0)$ is a double point, there exist integers $m \neq i$ and $n \neq j$ such that one of the following equalities holds: (i) $(2n/4k)T +     t_1 = (4m/4l)T +t_2$, (ii) $(2n/4k)T - t_1 = (4m/4l)T+t_2$, (iii) $(2n/4k)T + t_1 = (4m/4l)T - t_2$, or (iv) $(2n/4k)T - t_1  =  (4m/4l)T - t_2$.
%\begin{enumerate}
%\item $  \frac{ 2n}{4k}T + t_1 = \frac{  4m}{4l}T + t_2$,
%\item $ \frac{ 2n}{4k}T - t_1 = \frac{  4m}{4l}T + t_2$,
%\item $\frac{  2n}{4k}T + t_1 = \frac{  4m}{4l}T - t_2$, or
%\item $\frac{ 2n}{4k}T - t_1 = \frac{  4m}{4l}T - t_2$.
%\end{enumerate}

Assume the first case, so that 
\[
t_1 = \frac{ 2 ( 2mk-nl)}{4kl}T + t_2.
\]
Substituting into \eqref{eq:t0}, we obtain $2(i-m)k = (j-n)l$. Since $\lvert j-n \rvert <k$ and $2 \lvert i-m \rvert < l$, this would imply $k=l$, which contradicts the assumption that $k$ and $l$ are relatively prime. A similar contradiction arises in the last case, where the same reasoning leads to  $n=j=k/2$ and $m=i=l/4$, contradicting   $m \neq i $ and $n \neq j$. In the second case we find 
\[
t_1 = \frac{ (n-j)l + 2(i-m)k}{4kl}T \quad \Longrightarrow \quad t_0=   \frac{  2(i-m)k + (n+j)l }{4kl}T .
\]
The third case can be treated analogously, from which the claim follows in the case $l \in 4 \N$. A similar argument applies to the case $l \in 4 \N_0+2$, from which the claim is proved.

\medskip

Consider the set
\[
S= \left\{  \frac{ 2 \alpha k + \beta l }{4kl}T  \in \bigg[0, \frac{T}{4}\bigg] \relmiddle| \alpha, \beta \in \Z \right\} .
\]
 By Claim1, for each double point $(\lambda_0, \nu_0)$ of $\gamma^+$ there  exist distinct times $t_0 \neq t_1 \in S$ such that $(\lambda_0, \nu_0) = ( \lambda( t_0) , \nu(t_0)) = (\lambda( t_1), \nu( t_1))$.  Among 
the $\# S = kl/2 +1$ points in $S$, the following are excluded:
\begin{itemize}
    \item the $k+1$ points   $t = jT/4k, j=0,1, \ldots, k$, at which   $\lambda = 0 $ or $\lambda =  \lambda_{\max}$;  

    \item  the  $ l/2+1 $ points $t = 2iT/4l, i=0, 1,\ldots, l/2$, at which   $\nu = 0$ or $\nu=  \pi$.

\end{itemize}
These points do not represent double points of $\gamma^+$, and then by  
\begin{equation}\label{eq:twosets}
\left\{ \frac{ jT}{4k} \relmiddle| j=0, 1, \ldots, k \right\} \cap \left\{ \frac{ 2iT}{4l} \relmiddle| i=0,1,\ldots, \frac{l}{2} \right\} = \left\{0,\frac{T}{4}\right\},
\end{equation}
the number of points in $S$ that do not correspond to maxima or minima of either variable is
\[
 \frac{kl}{2} +1 - \left(  k+1  +  \frac{l}{2} +1  - 2 \right) = (k-1)\left( \frac{l}{2}-1 \right).
 \]

\medskip

\noindent
{\it {\bf Claim2.} The $(k-1)(l/2-1)$ points represent double points of $\gamma^+$.}

 Let $t_0 \in S$ be a time that does not belong to either of the two sets on  the left-hand side of \eqref{eq:twosets}. Pick integers $\alpha, \beta, \alpha', \beta' \in \Z$ such that
 \[
0< \frac{ 2\alpha k + \beta l}{4kl}T <\frac{T}{4k}  \quad \text{ and } \quad  
0<  \frac{ 2\alpha' k + \beta' l}{4kl}T <\frac{T}{2l} 
 \]
and define $m = 2\alpha k + \beta l $ and $n = 2 \alpha' k + \beta' l$.  

 As in the proof of the previous claim,  the times at which $\lambda = \lambda_0 := \lambda(t_0)$ occurs in the set 
   \[
A = \left\{ \frac{mT}{4kl}, \frac{2lT}{4kl} \pm \frac{mT}{4kl}, \frac{4lT}{4kl} \pm \frac{mT}{4kl}, \ldots,   \frac{(k-1)lT}{4kl} \pm \frac{mT}{4kl} \right\} 
   \]
Similary, the times at which    $\nu = \nu_0 := \nu(t_0)$ occur belong to the set:
\begin{itemize}
    \item if $l \in 4\N$, then 
\[
B =   \left\{ \frac{nT}{4kl} ,  \frac{4kT}{4kl} \pm \frac{nT}{4kl}, \frac{8kT}{4kl} \pm \frac{nT}{4kl}, \ldots, \frac{ (l-4)kT}{4kl}\pm \frac{nT}{4kl},  \frac{klT}{4kl}- \frac{nT}{4kl}  \right\} 
\]
\item if $l \in 4\N_0+2$, then  

\[
B=  \left\{ \frac{nT}{4kl},   \frac{4kT}{4kl} \pm \frac{nT}{4kl}, \frac{8kT}{4kl} \pm \frac{nT}{4kl}, \ldots, \frac{(l-6)kT}{4kl}\pm \frac{nT}{4kl}  ,\frac{(l-2)kT}{4kl}\pm \frac{nT}{4kl}  \right\} 
\]
\end{itemize}
In order to prove the claim we have to show that $\# A \cap B =2$.
Since $t_0 \in A \cap B$ by construction,  it suffices to show that there exists exactly one additional element in $A \cap B$  other than $t_0$.

We first consider the case $l \in 4 \N_0$. Suppose   that the following identity holds for some  $0\leq a \leq (k-1)/2$ and $0 \leq b \leq (l-4)/4  $: 
\begin{equation}\label{eq:mnn}
\frac{ 2al}{4kl}T +\frac{m}{4kl}T = \frac{ 4bk}{4kl}T + \frac{n }{4kl}T.
\end{equation}
This is equivalent to $2al+m = 4bk -n$.
Since $1 \leq m \leq l-1$ and $ k$ and $l$ are relatively prime, there exist $P,Q \in \Z$ such that $m=kP - lQ$. We define $i,j$ as
\[
i = \begin{cases} \frac{1}{2}(P-2b) &  P>2b, \\ \frac{1}{2}(2b-P) & P < 2b, \\ 0 & P = 2b \end{cases} \quad \quad j = \begin{cases} Q-a &  Q>a, \\ a-Q & Q < a, \\ 0 & Q = a \end{cases}
\]

Consider the case where $i=\frac{1}{2}(P-2b)$ and $j=Q-a$, so that $m=   k (2i+2b) - l(j+a) $.   Using \eqref{eq:mnn} we find 
\begin{equation}\label{eq:mn}
 2jl+m = 4ik -n,
\end{equation}
implying  that  
\[
\frac{ 2jl+m}{4kl}T = \frac{ 4ik-n}{4kl}T \in A \cap B.
\]
That is,
\[
 \lambda\left( \frac{ 2al+m}{4kl}T \right) = \lambda\left( \frac{ 2jl+m}{4kl}T \right) \quad \text{ and } \quad 
  \nu\left( \frac{ 2al+m}{4kl}T \right) = \nu\left( \frac{ 2jl+m}{4kl}T \right) .
\]
It remains to show that $ a \neq j$. Assume by contradiction that $a=j$. Then combining \eqref{eq:mnn} and \eqref{eq:mn}, we obtain $4bk + n = 4ik - n$, and hence $  n = 2(i - b)k$. However, this contradicts the assumption that   $1 \leq n \leq 2k-1$,   showing that $a \neq j$ and  $(\lambda_0, \nu_0)$ is indeed a double point.

In the case $i=\frac{1}{2}(P-2b)$ and $j=a-Q$, an argument analogous to the previous case yields  $2jl-m = -4ik +n$.
However, $(-4ik+n)T/4kl \notin S$, 
implying that this case cannot happen. The remaining seven cases can be treated in a similar way.  

The above reasoning also applies to case $l \in 4\N_0+2$, from which the claim is proved.  

\medskip

Claim2 shows that the positive part $\gamma^+$ contains exactly $ (k-1)(l/2-1)/2$ double points. Taking into account the symmetric contribution from the negative part $\gamma^-$, as well as the $(k+l-3)/2$ double points lying on the $q_1$-axis (see the previous discussion), we conclude that when $l$ is even, the unique $q_1$-symmetric collision-collision orbit contains precisely
\[
(k-1)\left( \frac{l}{2}-1\right)   + \frac{k+l-3}{2}  =  \frac{kl-k-1}{2}
\]
double points. This completes the proof of the proposition in the case $l$ is even.

 \medskip

 \noindent \textit{Case 2.} $l$ is odd.

Now suppose that $l$ is odd. In this case, there are two collision-collision orbits on each $T_{k,l}$-torus, both of which collide with both primaries and are related by the $q_1$-axis reflection, see Lemma \ref{lem:Lregion}.  We fix one of these orbits and denote it by $K$. 
Since $K$ is not $q_1$-symmetric, the argument used in the case $l$ is even cannot be applied directly. To analyze this case, we require a different approach, for which we begin with some preliminary observations.

\medskip

\noindent
{\it {\bf Claim3.}   The collision-collision orbit $K$ does not have double points on the $q_1$-axis.}

%As before, we view $\lambda$ and $\nu$ as functions of time $t \in [0,T/2]$.  Since we are not considering the positive part of $\gamma$, but $\gamma$ itself, we do not reflect the negative parts of the graphs. 
%Note that $\lambda$ makes $2k$ quarter-cycles, where each quarter-cycle corresponds to increase or decrease of $ \lambda $ in $[0, \lambda_{\max}]$. Similarly, $\nu$ makes $l$ half-cycles, where each half-cycle corresponds to increase or decrease of $ \nu $ in $[0,\pi]$. 

Let $\gamma$ be a $T$-periodic parametrization of $K$, chosen such that $\gamma(0)=(0,-\pi, p_\lambda^0, p_\nu^0)$, corresponding to a collision at $E$. It suffices to consider its restriction to $[0,T/2]$, and again by abuse of notation we continue to denote the restriction by  $\gamma$. According to the proof of    Lemma \ref{lem:Lregion} we have 
\[
 \gamma  ( T/2) = \begin{cases}  (0,0, p_{\lambda}^0, p_{\nu}') & {\text{if $k$ is even,}} \\   (0,0, -p_{\lambda}^0 , p_{\nu}') & {\text{if $k$ is odd,}}
\end{cases}
\]
for some $p_\nu'>0$, corresponding a collision at $M$. 

%Let $\gamma$ be a $T$-periodic parametrization of a collision-collision orbit on a $T_{k,l}$-torus with $l$ odd. We show that $\gamma$ has no double points on the $q_1$-axis, as indicated in the proof of the previous proposition.  As in the proof of the same proposition, we choose $\gamma$ so that  $\gamma(0)=(0,-\pi, p_\lambda^0, p_\nu^0) $ and consider the restriction $\gamma|_{[0,T/2]}$, which will be denoted again by $\gamma$.

%$\gamma_1(0)=\gamma_2(0)=   (0,-\pi, p_\lambda^0, p_\nu^{\max}) $, where $p_\lambda^0, p_\nu^{\max}>0$, corresponding to a collision at $E$.       %It suffices to consider their restrictions to $[0,T/2]$, and again by abuse of notation we stick to $\gamma_1, \gamma_2$.  The proof of the previous lemma shows that
%\[
%\begin{cases}  \gamma_j \left( \frac{T}{2} \right) = (0,0, p_{\lambda, 0}, p_{\nu}^{\min}) & {\text{if $k$ is even}} \\  \gamma_j \left( \frac{T}{2} \right) = (0,0, -p_{\lambda, 0}, p_{\nu}^{\min}) & {\text{if $k$ is odd}}
%\end{cases}
%\]
%corresponding a collision at $M$.  Since $\gamma_j $ itself is not $q_1$-symmetric, there might be no double points of $\gamma_j$ on the $q_1$-axis.  Indeed, 

 We now reflect the negative part of $\gamma$ with respect to the $q_1$-axis and regard the resulting trajectory as one that does not pass through points with $q_1<0$. 
As before, we treat $\lambda$ and $\nu$ as functions of time $t \in [0,T/2]$ and reflect the negative parts of their graphs with respect to the $q_1$-axis.  
Note that $\lambda$ completes $2k$ quarter-cycles, each corresponding to increase or decrease of $ \lambda $ in $[0, \lambda_{\max}]$, and $\nu$ completes $l$ half-cycles, each corresponding to increase or decrease of $ \nu $ in $[0,\pi]$.

  Suppose that $(\lambda(t_0), \nu(t_0))=(\lambda_0, \nu_0) $ for some $t_0 \in (0,T/2)$ is a double point on the $q_1$-axis. Then either   $\lambda_0=0$, $\nu_0=0$ or $\nu_0=\pi$. We first consider the case $\lambda_0=0$, in which case
\[
t_0 \in  \left\{ \frac{2jl}{4kl}T \relmiddle| j=1, \ldots, k-1 \right\}.
\]
Note  that $\nu_0 \neq 0, \pi$, since otherwise $(\lambda_0, \nu_0)$ represents a collision. 

Let $t_2 \in (0, T/2l)$ be the minimal positive time such that $\nu(t_2)=\nu_0$. Then $\nu(t) = \nu_0$ holds at the following times:
\[
t = t_2, \frac{2T}{2l} \pm t_2, \frac{4T}{2l} \pm t_2, \ldots,  \frac{(l-3)T}{2l} \pm t_2,\frac{(l-1)T}{2l} \pm t_2.
\]
 Suppose first that 
 \[
 t_0 = \frac{2jl}{4kl}T =  \frac{2ik}{4kl}T +t_2
 \]
for some $1 \leq j \leq k-1$ and $1 \leq i \leq  l-1 $. Then we have
\[
t_2 = \frac{ 2( jl - ik)}{4kl}T.
\]
Since $(\lambda_0, \nu_0)$ is a double point, there exist integers $n\neq j$ and $m \neq i $ such that  $(2nl / 4kl)T = (2mk/4kl)T \pm t_2$. Arguing as before we find $(n \mp j )l = (m \mp i )k$. However, this implies $\lvert m \mp i\rvert \geq l$, contradicting the assumption that  $1 \leq m \leq k-1$.

For completeness, one must also consider the case
\[
t_0 = \frac{2j l }{4 k l}T = \frac{ 2i k }{4 k l }T-t_2,
\]
but a similar argument leads to the same contradiction. This finishes the proof of the claim.

%In the first case, we have
%\[
%t_2 = \frac{ 2( jl - ik)}{4kl}T = \frac{ 2(nl - mk)}{4kl}T \quad \Leftrightarrow \quad jl - ik = nl - mk \quad \Leftrightarrow \quad (n-j)l = (m-i)k.
%\]
%Since $(k,l)=1$, this implies that $k | n-j$ and $l |m-i$, which is impossible since $\lvert n-j \rvert < k$ and $\lvert m - i \rvert <l$. In the last case, we have 
%\[
%t_2 = \frac{ 2( jl - ik)}{4kl}T = \frac{ 2(mk - nl)}{4kl}T \quad \Leftrightarrow \quad jl - ik = mk-nl \quad \Leftrightarrow \quad (n+j)l = (m+i)k,
%\]
%which implies that $k | n+j$ and $l |m+i$. This is impossible as well since $m+i \leq l-1$. 

\medskip

%Let $\gamma$ be a parametrization of $K$ as in the proof of Claim3. In particular, it travels from $E$ to $M$.
We now reflect the negative part of $K$ with respect to the $q_1$-axis: denoting by $K^\pm$ its positive and negative parts, respectively, consider the union $\bar K:=K^+ \cup K^-$. We may think of it as a piecewise smooth orbit (not smooth at points lying on the $q_1$-axis) that collides with both $E$ and $M$. Let $\bar{\gamma}$ be its $T$-periodic parametrization, chosen so that the satellite collides with $E$ at $t=0$ and with $M$ at $t=T/2$. As before, we denote its restriction to $[0,T/2]$   by $\bar{\gamma}$.

\medskip

\noindent
{\it {\bf Claim4.} Every self-intersection point of $\bar{\gamma}$ is a double point.}

As before, denote by $\lambda_{\max}$ the maximum value of the variable $\lambda$ along $K$. It suffices to show that for every point $(\lambda_0, \nu_0) \in (-\lambda_{\max}, \lambda_{\max}) \times ( -\pi, \pi)$ the ellipse $\lambda = \lambda_0$ and the hyperbola $\nu=\nu_0$ intersect at most at two points. This follows easily by inspecting the Hamiltonian \eqref{eq:FofEuler}. Note that if the intersections consist of two points, then they are either two single points $(q_1, \pm q_2)$ that are mirror images with respect to the $q_1$-axis or a double point.

\medskip

Arguing as in Claim1, we find that  any time $t_0 \in (0,T/2)$ representing a double point of $\bar{\gamma}$ lies   in  the set
\begin{equation*}\label{eq:SSSset}
\bar{S} = \bigg\{ \frac{2\alpha k + \beta l}{4kl} T \in \bigg[ 0,\frac{T}{2}\bigg]  \; \bigg| \; \alpha, \beta \in \Z    \bigg\}
\end{equation*}
with cardinality $\# \bar{S} = 2 kl +1$. As shown in   Claim2, among these,   the $2k+l$ points correspond  to the maxima and minima of $\lambda$ and $\nu$.  Hence, the remaining $(2k-1)(l-1)$ points represent the double points of $\bar{\gamma}$.

 Note that reflecting the negative part $K^- $ with respect to the $q_1$-axis introduces additional intersection points -- namely, the points where $K^-$ intersects with $K^+$.  Therefore,  in order to determine the exact number of self-intersection points of $K$, such mixed intersections between $K^\pm$ must be excluded from the count.

%As before, we view $\lambda$ and $\nu$ as functions of time $t \in [0,T/2]$.  Since we are not considering the positive part of $\gamma$, but $\gamma$ itself, we do not reflect the negative parts of the graphs. 
%Note that $\lambda$ makes $2k$ quarter-cycles, where each quarter-cycle corresponds to increase or decrease of $ \lambda $ in $[0, \lambda_{\max}]$. Similarly, $\nu$ makes $l$ half-cycles, where each half-cycle corresponds to increase or decrease of $ \nu $ in $[0,\pi]$. 

%$\gamma_1(0)=\gamma_2(0)=   (0,-\pi, p_\lambda^0, p_\nu^{\max}) $, where $p_\lambda^0, p_\nu^{\max}>0$, corresponding to a collision at $E$.       %It suffices to consider their restrictions to $[0,T/2]$, and again by abuse of notation we stick to $\gamma_1, \gamma_2$.  The proof of the previous lemma shows that
%\[
%\begin{cases}  \gamma_j \left( \frac{T}{2} \right) = (0,0, p_{\lambda, 0}, p_{\nu}^{\min}) & {\text{if $k$ is even}} \\  \gamma_j \left( \frac{T}{2} \right) = (0,0, -p_{\lambda, 0}, p_{\nu}^{\min}) & {\text{if $k$ is odd}}
%\end{cases}
%\]
%corresponding a collision at $M$.  Since $\gamma_j $ itself is not $q_1$-symmetric, there might be no double points of $\gamma_j$ on the $q_1$-axis.  Indeed, 

\medskip

\noindent
{\it {\bf Claim5.} If $t_0 \in (0, T/2)$ corresponds to a double point along $\gamma$, then it must be of the form  $t_0 = \left( 2\alpha k + (2 \beta +1)l \right)T / 4kl$ for some $\alpha, \beta \in \Z$.}

As in the proof of Claim1, let $(\lambda_0, \nu_0) = (\lambda(t_0), \nu(t_0))$ for some $t_0 \in (0, T/2)$ be a double point of $\gamma$.  
Let   $t_1 \in (0,T/4k) $ be the least time such that $\lambda(t_1)=\lambda_0$. Then $\lambda(t) = \lambda_0$ occurs at the following times:
\begin{itemize}
 \item if $k$ is odd, then     
\[
t =\begin{cases}     \displaystyle     \frac{T}{4k} \pm t_1, \frac{5T}{4k} \pm t_1, \ldots,\frac{(2k-5)T}{4k}\pm t_1   , \frac{(2k-1)T}{4k}\pm t_1                & \text{ if } \lambda_0 >0 , \\         \\    \displaystyle  \frac{3T}{4k} \pm t_1 , \frac{7T}{4k}\pm t_1, \ldots,  \frac{(2k-7)T}{4k}\pm t_1   ,   \frac{  (2k-3)T }{4k}\pm t_1      & \text{ if } \lambda_0 <0,        \end{cases}
\]

\item if $k$ is even, then 
\[
t =\begin{cases}    \displaystyle    \frac{T}{4k} \pm t_1, \frac{5T}{4k} \pm t_1, \frac{(2k-7)T}{4k}\pm t_1   ,\ldots, \frac{(2k-3)T}{4k}\pm t_1              & \text{ if } \lambda_0 >0 , \\       \\  \displaystyle   \frac{3T}{4k} \pm t_1 , \frac{7T}{4k}\pm t_1, \ldots,  \frac{(2k-5)T}{4k}\pm t_1   ,   \frac{  (2k-1)T }{4k}\pm t_1       & \text{ if } \lambda_0 <0.        \end{cases}
\]

\end{itemize}
We also let    $t_2 \in (0,T/2l)$ be the least time such that $\nu(t_2)=\nu_0$. Then  $\nu(t)= \nu_0$ occurs at the following times:
   \[
t =\begin{cases}    \displaystyle   \frac{2T}{2l}-t_2, \frac{4T}{2l}-t_2, \ldots,     \frac{(l-3)T}{2l}-t_2  , \frac{(l-1)T}{2l}-t_2         & \text{ if } \nu_0 >0 , \\       \\  \displaystyle  t_2,  \frac{2T}{2l}+t_2,   \ldots,     \frac{(l-3)T}{2l}+t_2  , \frac{(l-1)T}{2l}+t_2     & \text{ if } \nu_0 <0.       \end{cases}
\]

We consider the case where $k$ is odd. Assume that both $\lambda_0$ and $\nu_0$ are positive, and suppose that    
\[
t_0 = \frac{(4j+1)lT}{4kl} + t_1  = \frac{4ikT}{4kl}-t_2
\]
for some integers $  0 \leq  j  \leq (k-1)/2 $ and $1 \leq i \leq (l-1)/2$.
As in the previous argument,  we find   $m \neq i$ and $n \neq j$ such that either (i) $ (4n+1)lT/4kl + t_1 = 4mkT/4kl - t_2$; or (ii)  $ (4n+1)lT/4kl - t_1 = 4mkT/4kl - t_2$. The first case leads to a contradiction with the fact that $k$ and $l$ are relatively prime integers. Assuming the last case, we find
\[
t_0 = \frac{  (2n+2j+1)l + 2(i-m)k}{4kl}T,
\]
which is of the  desired form and thus proves the claim in this setting. The remaining cases--such as when either $\lambda_0<0$ or $\nu_0$, or both--can be treated in an analogous manner.

The same reasoning applies to the case where $k$ is even. This finishes the proof of the claim.

\medskip

Consider the subset
\[
S_0 = \left\{   \frac{ 2\alpha k + (2\beta+1) l }{4kl}T        \relmiddle| \alpha, \beta \in \Z \right\}  \subset \bar{S} .
\]
The cardinality of $S_0$ is $\# S_0 =  kl$. Among these, the $k$ points $(2j+1)T/4k,0\leq j \leq  k-1$ correspond to the maximum of $\lambda $ and do not represent double points of $K$.   
%The number of remaining points is
%\[
 %  \# S - k      =  k (l-1).
%\]
By Claim3, which asserts that there are no double points on the $q_1$-axis, it follows that when $k=l=1$,  the collision-collision orbit $K$ admits no self-intersection points. This proves the  assertion of the proposition in the case $k=l=1$.

Assume now that $l \geq 3$. Since every point in the set $\bar{S}$, excluding those   corresponding to the maximum and minimum of the variables $\lambda$ and $\nu$, represents a double point of $\bar{\gamma}$, Claim5 tells us that the remaining $\# S_0 - k =  k (l-1)$ points in $S_0$ represent self-intersections of the  $\gamma$, see Figure \ref{fig:ex2}.
\begin{figure}[h]
  \centering
  \includegraphics[width=1.0\linewidth]{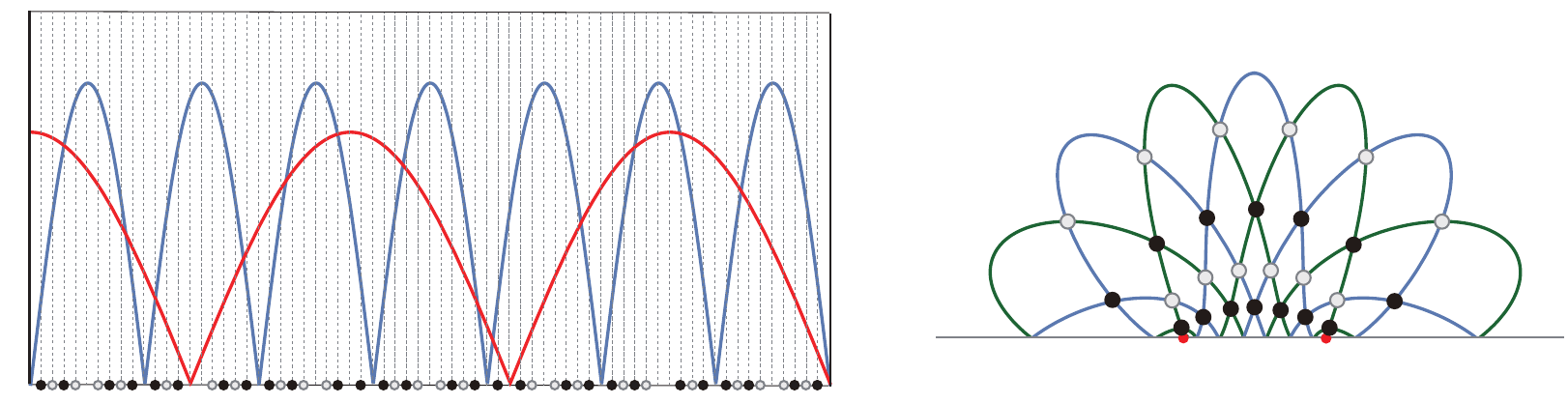}
 \caption{The case $(k,l)=(7,5)$.  Left: The graphs of $\lambda= \lambda(t)$ (blue) and $\nu=\nu(t)$ (red) over  the   interval $[0,T/2]$, which is divided into $70$ equal parts by dashed vertical lines. The black dots represent self-intersections of $K$, while the gray dots correspond to intersections between $K^\pm$. Right: The orbit $\bar{K}$, obtained by taking the union of $K^+$ (blue) and the reflection of $K^- $ (green)  with respect to the $q_1$-axis. Here $K^\pm$ indicate  the positive and negative parts of a collision-collision orbit $K$ lying on a $T_{7,5}$-torus, respectively}
 \label{fig:ex2}
\end{figure}  This proves the assertion of the proposition when $l$ is odd. 
  This completes the proof of the proposition. 
\end{proof}

   \begin{remark}\label{rmk:useful} 
In this remark we collect several facts about collision-collision orbits that follow from the arguments used in the proof of the previous proposition. These facts will be used in the computation of the invariant $(\mathcal{J}_{E,M},n)$ in Section \ref{subsec:EM}.

 \medskip

 Suppose that $l$ is even. Let $K$ be  a collision-collision orbit at $E$, lying on a $T_{k,l}$-torus. We denote its  parametrization again by $\gamma$.

   \medskip

\noindent
{\bf Fact(1e).} The intersections of $\gamma$ with the $q_1$-axis consist of:
\begin{itemize}
\item[-] the two collision points at $E$ and $M$;

\item[-] the single point $P$; and

\item[-] $(k+l-3)/2$ double points.

\end{itemize}
The order of these points along the $q_1$-axis depend on the parity of $l$ mod $4$ are as follows:
\begin{itemize}
\item if $l \in 4\N$, then the ordering is
\[
  P, \;  (\tfrac{l}{4}-1) \text{ double points}, \;  E, \;  \tfrac{k-1}{2} \text{ double points} , \; M, \; \tfrac{l}{4} \text{ double points}  
\]

\item if $l \in 4\N_0+2$, then the ordering is  
\[
    \tfrac{l-2}{4}  \text{ double points}, \;  E, \;  \tfrac{k-1}{2} \text{ double points} , \; M, \; \tfrac{l-2}{4} \text{ double points}, \; P
\]
     
\end{itemize}

   \medskip

\noindent
{\bf Fact(2e).} In the proof of Claim2, we showed that each double point of the positive part $\gamma^+$ lies at the intersection of the ellipse  $\lambda(t)$ and the hyperbola $\nu( t)$, evaluated at $t = mT / 4kl = nT / 4kl$, 
where $ 0 < m= 2\alpha k + \beta l < l$ and $  0 < n = 2 \alpha' k + \beta' l < 2k$ for some  integers $\alpha, \beta,\alpha'$, and $\beta'$.  Since gcd$(k,l/2)=1$, the possible values of $m$ and $n$ range over the even integers:  $m=2j,\; j=1, \ldots,l/2-1$, and $n=2i,\; i=1,\ldots, k-1$.

It follows that  
there exist $0 =\lambda_0 < \lambda_1 < \lambda_2 < \cdots < \lambda_{l/2-1} < \lambda_{\max}$   and
   $-\pi = \nu_0 < \nu_{ 1} <   \cdots < \nu_{k- 1} < \nu_k = 0$ such that all self-intersection points of the collision-collision orbit $K$ occur at the  intersections of the ellipses $\lambda = \lambda_j $ and the hyperbolae $\nu=\nu_i $,  see Figure \ref{fig:examplesforRemark}. These coordinates satisfy 
\[
   \lambda_j = \lambda\left(  \frac{2j T}{4kl}   \right) = \lambda\left( \frac{2l \pm 2j}{4kl}T     \right)    = \cdots =   \lambda\left( \frac{(k-1)l \pm 2j}{4kl}T    \right)  
   \]
   and
   \[
     \nu_i =   \nu \left(   \frac{2i T}{4kl}   \right)     = \nu \left( \frac{4k  \pm 2i  }{4kl}T    \right)    =\cdots =  % \nu \left(  \frac{(l-4)kT}{4kl} \pm  \frac{2i T}{4kl}   \right) = 
     \begin{cases} \displaystyle \nu \left( \frac{kl  - 2i }{4kl}T \right) & \text{  if $ l \in 4\N$,} \\ \\  \displaystyle
         %\nu \left(  \frac{(l-6)kT}{4kl} \pm  \frac{2i T}{4kl}   \right) =
     \nu \left( \frac{(l-2)k   \pm 2i }{4kl}T   \right) & \text{  if $  l \in 4\N_0 +2$.} \end{cases}
   \]

   \medskip

\noindent
{\bf Fact(3e).} We now take a closer look at the intersection $K \cap ( \lambda  = \lambda_{\max}). $ 
In view of Fact(2e), it suffices to determine the values  $\nu_j$ at which $\lambda  =   \lambda_{\max}$. Recall that $\lvert \lambda\rvert  = \lambda_{\max}$  occurs  at times of the form $t=(2a-1)lT/4kl, a \geq 1$. 
If $l \in 4\N$, then   $(2a-1)l \in 4\N$, but not in   $  2\N \setminus 4\N$. Hence,   the hyperbolae $\nu=\nu_0, \nu_2, \ldots, \nu_{k-1}$ intersect the ellipse $\lambda = \lambda_{\max}$.
If instead $l \in 4\N_0+2$, then $(2a-1)l \in 2\N \setminus 4 \N$, and thus the intersections occur at $\nu=\nu_1, \nu_3, \ldots, \nu_{k }$. %intersect the ellipse $\lambda = \lambda_{\max}$.
 In summary, the intersections between the collision-collision orbit and the ellipse $\lambda = \lambda_{\max}$ are given by
 \[
K \cap (\lambda  = \lambda_{\max}) = \begin{cases}   (\lambda  = \lambda_{\max}) \cap (\nu=\nu_{\rm{even}})       & l \in 4\N, \\     (\lambda  = \lambda_{\max}) \cap (\nu=\nu_{\rm{odd}})  &   l \in 4 \N_0+2. \end{cases}
 \]

   \medskip

\noindent
{\bf Fact(4e).}  As shown in Fact(1e), there are $(k-1)/2$ double points on the ellipse $\lambda = \lambda_0$.  On each of the other ellipses $\lambda = \lambda_j, j=1, \ldots, l/2-1$, there are exactly $k$ double points. We now explain this in more detail. 

Recall from Fact(2e) that the values $\lambda = \lambda_{\rm even}$ and $\lambda = \lambda_{\rm odd}$ are attained at times of the form
\[
t_{\rm even} = \frac{T}{4kl} ( 2  \spadesuit  l \pm 2 {\rm even}), \quad t_{\rm odd} = \frac{T}{4kl} ( 2    \spadesuit   l \pm 2 {\rm odd}), 
\]
respectively. In a similar way, we have   
\[
t_{\rm even}' = \frac{T}{4kl} ( 4   \spadesuit  k \pm 2 {\rm even}), \quad t_{\rm odd}' = \frac{T}{4kl} ( 4    \spadesuit  k \pm 2 {\rm odd}), 
\]
for the values $\nu = \nu_{\rm even}$ and $\nu = \nu_{\rm odd}$, 
respectively. Since $l$ is even,  both  $t_{\rm even}$ and $t_{\rm even}'$ are divisible by $4$, while $t_{\rm odd}$ and $t_{\rm odd}'$ are only divisible  by $2$. This observation implies the following classification of  double points along $K$: 
   \begin{itemize}
   \item   the ellipse $\lambda = \lambda_0$ intersects   the hyperbolae $\nu=\nu_2, \nu_4, \ldots, \nu_{k-1}$, yiedling $(k-1)/2$ double points;
   \item   each ellipse  $\lambda = \lambda_{\rm even}$ intersects    the hyperbolae $\nu=\nu_0, \nu_2, \ldots, \nu_{k-1}$, yiedling $k$ double points; and
      \item  each ellipse    $\lambda = \lambda_{\rm odd}$   intersect the hyperbolae $\nu=\nu_1, \nu_3,\ldots, \nu_k$, also yielding $k$ double points. 
   \end{itemize}

 \medskip

We now turn to the case where $l$ is odd. Let $K$ be one of the two collision-collision orbits of type II lying  on a $T_{k,l}$-torus, and denote its parametrization by $\gamma$, as in  the previous proposition.

   \medskip

\noindent
\textbf{Fact(1o).} The collision-collision orbit $K$ intersects the $q_1$-axis in exactly  $k+l$ points. These intersection points are ordered as
\begin{center}
    $\frac{l-1}{2}$ single points, $E$, $k-1$ single points, $M$, $\frac{l-1}{2}$ single points
\end{center}

   \medskip

\noindent
\textbf{Fact(2o).} There exist values $0 < \lambda_1 < \cdots < \lambda_{(l-1)/2} < \lambda_{\max}$ and $-\pi < \nu_1 < \cdots < \nu_k < 0$, such  that
the self-intersection points of the collision-collision orbit $K$ lie at the intersections of the ellipses $\lambda = \lambda_j$ and the hyperbolae $\nu = \nu_i$, see Figure \ref{fig:examplesforRemark}.

\medskip

Other structural facts of this type can be formulated in a similar manner, but are omitted here as they are not needed for the arguments that follow.

 \begin{figure}[h]
  \centering
  \includegraphics[width=1.0\linewidth]{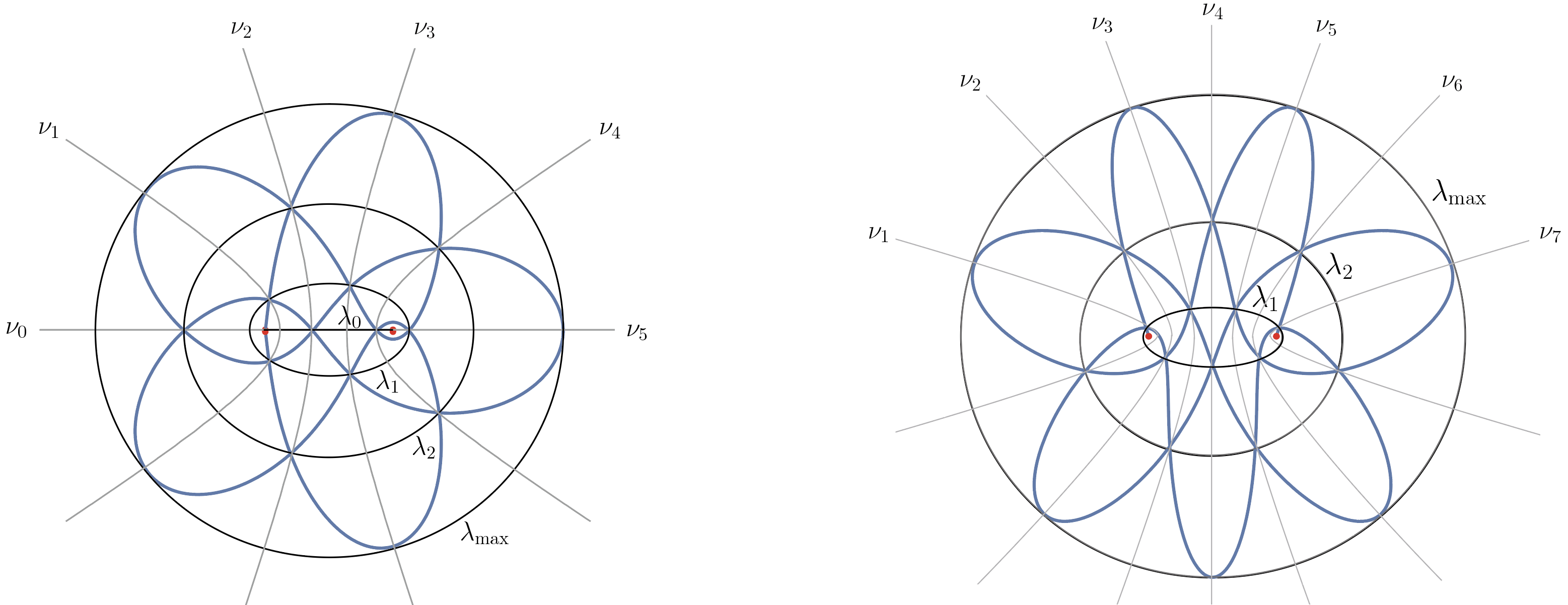}
 \caption{  Illustrations of Fact(2e) and Fact(2o) in   Remark  \ref{rmk:useful}, shown for the cases   (Left) $k=5, \; l=6 $  and (Right) $k=7,\; l=5$ 
 }
 \label{fig:examplesforRemark}
\end{figure}  

     \end{remark}

The previous proposition, combined with     Proposition \ref{prop:formula of collision orbits} and Lemma \ref{lem:Lregion}, implies the following assertion.

   \begin{proposition}\label{Prop:J0}
   Every $T_{k,l}$-type orbit in the $L$-region satisfies
 \[
 \mathcal{J}_0 = kl-k+1.
 \]
   \end{proposition}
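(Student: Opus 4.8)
The plan is to reduce the computation of $\mathcal{J}_0$ on the entire $T_{k,l}$-torus to a single distinguished orbit, and then to combine two results already in hand: the self-intersection count of Proposition~\ref{prop:quadruple} and the explicit $\mathcal{J}_0$-formula of Proposition~\ref{prop:formula of collision orbits}. First I would recall that each $T_{k,l}$-torus constitutes a two-center Stark homotopy and, by Lemma~\ref{lem:Lregion}, contains a collision-collision orbit $K$. Since $\mathcal{J}_0$ is invariant under two-center Stark homotopies (the proposition preceding Proposition~\ref{prop:formula of collision orbits}), it suffices to evaluate $\mathcal{J}_0(K)$ for this one orbit. Moreover, Corollary~\ref{cor:nobrake} guarantees that a lemniscate motion admits no braking points, so the distinguished orbit can only be of collision-collision type, and the brake-brake and brake-collision cases of Proposition~\ref{prop:formula of collision orbits} never occur here.

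Next I would split into the two cases dictated by the parity of $l$, using the classification in Lemma~\ref{lem:Lregion}. If $l$ is even, then $K$ is a collision-collision orbit of type I, so Proposition~\ref{prop:formula of collision orbits} gives $\mathcal{J}_0(K) = 2N + 2$; inserting $N = \tfrac{k(l-1)}{2} - \tfrac12$ from Proposition~\ref{prop:quadruple} yields
\[
\mathcal{J}_0(K) = 2\left(\frac{k(l-1)}{2} - \frac12\right) + 2 = k(l-1) + 1 = kl - k + 1.
\]
If $l$ is odd, then $K$ is of type II, so $\mathcal{J}_0(K) = 2N + 1$, and with $N = \tfrac{k(l-1)}{2}$ one obtains
\[
\mathcal{J}_0(K) = k(l-1) + 1 = kl - k + 1,
\]
the same value in both parity cases.

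The genuinely difficult step is not located in this proposition at all, but rather in Proposition~\ref{prop:quadruple}, whose proof carries the entire combinatorial burden of counting self-intersections through the cycle structure of the $\lambda$- and $\nu$-projections. Granting that count together with the Stark-homotopy invariance of $\mathcal{J}_0$, the present statement reduces to the short arithmetic verification above. The only points requiring care are to correctly match the orbit type to the parity of $l$ via Lemma~\ref{lem:Lregion}, and to observe that the two subtractions of $\tfrac12$ and the $+2$ in the even case precisely compensate, leaving the uniform answer $kl-k+1$.
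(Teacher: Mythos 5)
Your proposal is correct and follows exactly the paper's own route: reduce to a collision-collision orbit via Stark-homotopy invariance and Lemma \ref{lem:Lregion}, then combine the self-intersection count of Proposition \ref{prop:quadruple} with the case formula of Proposition \ref{prop:formula of collision orbits}, matching orbit type to the parity of $l$. The arithmetic in both parity cases is carried out correctly, so nothing is missing.
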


%%%%%%%%%%%%%%%%%%%%%%%%%%%%%%%%%%%%%%%%%%%%%%%%%%%%%%%%%%%%%%%%%%%%%%%%%%%
%%%%%%%%%%%%%%%%%%%%%%%%%%%%%%%%%%%%%%%%%%%%%%%%%%%%%%%%%%%%%%%%%%%%%%%%%%%
%%%%%%%%%%%%%%%%%%%%%%%%%%%%%%%%%%%%%%%%%%%%%%%%%%%%%%%%%%%%%%%%%%%%%%%%%%%
%%%%%%%%%%%%%%%%%%%%%%%%%%%%%%%%%%%%%%%%%%%%%%%%%%%%%%%%%%%%%%%%%%%%%%%%%%%
%%%%%%%%%%%%%%%%%%%%%%%%%%%%%%%%%%%%%%%%%%%%%%%%%%%%%%%%%%%%%%%%%%%%%%%%%%%

   \subsection{The invariants $\mathcal{J}_E$ and  $\mathcal{J}_M$}
Let $K$ be a collision-collision orbit lying on a $T_{k,l}$-torus.  To determine its invariants $\mathcal{J}_E$ and $\mathcal{J}_M$, we again distinguish cases according to the parity of $l$.

   \medskip

   \noindent
   \textit{Case 1.}  $l$ is even.

  Recall that $K$ collides twice with the same primary and is $q_1$-symmetric.  As before, without loss of generality, we may assume that the collisions occur at  $E$.    
We begin by computing $\mathcal{J}_E(K)$.
In order to apply the Levi-Civita map $L_E(z)= z^2$ around $E$, we normalize the positions of the primaries by setting $E = (0,0)$ and $M=(1,0)$.

 \begin{lemma}\label{lem:simgleLE} The preimage $L_E^{-1}(K)$ is connected and is symmetric with respect to both the horizontal and vertical axes. 
 \end{lemma}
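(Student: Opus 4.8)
The plan is to treat the two assertions separately: the symmetry will follow from the reflection symmetry of $K$ combined with the intrinsic two-to-one nature of the squaring map, while connectedness will follow from the way the two collisions are resolved under Levi-Civita regularization.

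For the symmetry I would argue as follows. By Lemma \ref{lem:Lregion}, for $l$ even the orbit $K$ is invariant under reflection in the $q_1$-axis; after the normalization $E=(0,0)$ this axis is the real axis through the center of regularization, so $K$ is invariant under complex conjugation $z\mapsto\bar z$. The map $L_E(w)=w^2$ intertwines $w\mapsto\bar w$ with $z\mapsto\bar z$ and $w\mapsto -w$ with the identity on the base. Concretely, if $w\in L_E^{-1}(K)$ then $w^2\in K$, whence $\overline{w}^{\,2}=\overline{w^2}\in K$ and $(-w)^2=w^2\in K$, so both $\bar w$ and $-w$ lie in $L_E^{-1}(K)$. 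Thus $L_E^{-1}(K)$ is invariant under reflection in the real (horizontal) axis and under $w\mapsto -w$; composing the two yields invariance under $w\mapsto -\bar w$, i.e.\ reflection in the imaginary (vertical) axis. This gives symmetry with respect to both axes with no further input.

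For connectedness the delicate point is that $K$ passes through the singularity $E$, so the dichotomy recalled before the definition of $\mathcal{J}_E$ (which concerns generic immersions in $\C\setminus\{E,M\}$) does not apply verbatim; in fact, by Proposition \ref{prop:formula of collision orbits} a generic perturbation of $K$ has even winding number $w_E$ around $E$, so its preimage would a priori split into two components. My plan is to show that these two would-be components are glued at the regularized collision. I would parametrize $K$ by the double-covered trajectory $\gamma$ from the proof of Lemma \ref{lem:Lregion}, which over one period $[0,T]$ collides with $E$ exactly at $t=0$ and $t=T/2$. In the regularizing coordinate $w$ with $w^2=q-E$, each such rectilinear collision corresponds to the orbit passing through $w=0$; by Corollary \ref{cor:nobrake} the collision is not a braking point, so the regularized velocity is nonzero and the orbit crosses $w=0$ transversally rather than retracing. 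Since $w$ and $-w$ project to the same physical point, every such transverse passage toggles the two sheets of $L_E$. Having exactly two collisions per period, the lifted orbit toggles sheets twice, hence returns to its initial sheet and closes up after time $T$; since the two toggles occur at distinct times, it necessarily visits both sheets. Therefore the lift is a single closed curve whose image is exactly $L_E^{-1}(K)$, which is the desired connectedness.

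For the explicit bookkeeping it is convenient to use the complex form $q-E=2\cosh^2\!\big((\lambda+i\nu)/2\big)$ of the elliptic coordinates \eqref{eq:ellipticco}, which exhibits $w=\sqrt{2}\,\cosh\big((\lambda+i\nu)/2\big)$ as a continuous square root along $\gamma$ and makes the vanishing of $w$ at the $E$-collisions $(\lambda,\nu)=(0,\pm\pi)$ manifest; from this one reads off the sheet-toggling at $t=0$ and $t=T/2$ directly from the signs of $p_\lambda,p_\nu$ at the collisions. The main obstacle is precisely this collision analysis: one must verify carefully that, in regularized coordinates, the orbit is smooth and transverse at $w=0$ so that it genuinely changes sheet there (and does not merely retrace), and that no spurious additional components appear. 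Once the transverse sheet-toggling is justified, both the connectedness and, for later use, the counting of components follow immediately.
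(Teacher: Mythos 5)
Your symmetry argument is correct and is exactly the paper's: $K$ is $q_1$-symmetric (Lemma \ref{lem:Lregion}), conjugation and the deck transformation $w\mapsto -w$ both preserve $L_E^{-1}(K)$, and their composition gives the reflection in the vertical axis. Your overall plan for connectedness -- that the two would-be components are glued at the regularized collision -- is also the paper's idea. The problem is in how you execute it.

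The sheet-toggling bookkeeping is not valid as stated. The restriction of $L_E(w)=w^2$ to $\C\setminus\{0\}$ is a \emph{connected} double cover: its monodromy around the branch point exchanges the two local branches, so there is no global notion of ``which sheet the lift is on.'' Consequently, ``two toggles per period, hence the lift returns to its initial sheet and closes up after time $T$'' is a non sequitur: whether the continuous lift closes up after $T$ or only after $2T$ depends not only on the two passages through $w=0$ but also on the winding of $K$ around $E$ on the two arcs between the collisions, which you never control. (The closure after $T$ does happen to hold here, but proving it requires showing that this winding is even -- e.g.\ via the $q_1$-symmetry, or via the evenness of $w_E$ of perturbations -- none of which appears in your argument.) The final step ``it visits both sheets, therefore the image of the lift is exactly $L_E^{-1}(K)$'' has the same defect. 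Fortunately, none of this machinery is needed: the full preimage is the union of the images of the two continuous lifts $w(t)$ and $-w(t)$ of $\gamma$, each of which is a connected curve passing through $0=L_E^{-1}(E)$ (since $\gamma$ passes through $E$, no transversality required). Two connected sets sharing the point $0$ have connected union, so $L_E^{-1}(K)$ is connected. This one-line gluing argument -- components of the preimage all contain the branch point, hence coincide -- is precisely the paper's proof; if you replace your monodromy paragraph by it, your proposal becomes correct.
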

 \begin{proof} 
Let $\widetilde K_1$ and $\widetilde K_2$ denote two connected components of $L_E^{-1}(K)$.
 Since $L_E$ is a $2$-to-$1$ covering branched at $E$, it follows that the origin is a double point of $L_E^{-1}(K)$.
 This in particular implies that $\widetilde{K}_1$ and $\widetilde{K}_2$ coincide locally near the origin.  
Moreover, since they are globally related by a rotation of angle $\pi$ about the origin,   we conclude that $\widetilde{K}_1 = \widetilde{K}_2$, implying that $L_E^{-1}(K)$ is connected. Finally, since $K$ is $q_1$-symmetric, and the map $L_E$ is a $2$-to-$1$ branched covering, the preimage $L_E^{-1}(K)$ inherits symmetry with respect to both coordinate axes.   This finishes the proof of the lemma.
%For the last assertion we note that the symmetry about the $q_1$-axis of $K$ implies that $\widetilde{K}_1$ has the symmetry about the $x$-axis. Since it is invariant under a $\pi$-degree rotation, this shows that it  is symmetric about the  $y$-axis as well. This finishes the proof of the lemma.
 \end{proof}

We denote $\widetilde{K} = L_E^{-1}(K)$.  
Since the collision at $E$ is regularized, the curve $\widetilde{K}$ is regular, i.e.~it passes through the origin. In particular, it is no longer a distinguished orbit.  See Figure \ref{fig:ex9}.
 \begin{figure}[h]
     \centering
\begin{tikzpicture} 
 
\node[below] at (1.9,-0.05) {\small $E$};
     \node[below] at (3.2,-0.1) {\small $M$};

    \draw[thick] (2,0) [out=60, in=180] to (3.1,0.7); 
    \draw[thick] (3.1,0.7) [out=0, in=90] to (4,0); 
\begin{scope}[yscale=-1]
    \draw[thick] (2,0) [out=60, in=180] to (3.1,0.7); 
    \draw[thick] (3.1,0.7) [out=0, in=90] to (4,0); 
    \end{scope}

    \draw[thick] (-4,-0.8) [out=0, in=180] to (-2,0.8); 
    \draw[thick] (-2,0.8) [out=0, in=90] to (-1.1,0); 
    \draw[thick] (-4,-0.8) [out=180, in=270] to (-4.9,0); 
\begin{scope}[yscale=-1]
  \draw[thick] (-4,-0.8) [out=0, in=180] to (-2,0.8); 
    \draw[thick] (-2,0.8) [out=0, in=90] to (-1.1,0); 
    \draw[thick] (-4,-0.8) [out=180, in=270] to (-4.9,0); 

\end{scope}

    \draw[thick, ->] (-0.5,0.1) [out=30, in=150] to (1.4,0.1); 

       \filldraw[draw=black, fill=black] (2,0) circle (0.07cm);
              \filldraw[draw=black, fill=black] (3.2,0) circle (0.07cm);

             \filldraw[draw=black, fill=black] (-3,0) circle (0.07cm);
             \filldraw[draw=black, fill=black] (-4.1,0) circle (0.07cm);
             \filldraw[draw=black, fill=black] (-1.9,0) circle (0.07cm);

\node[below] at (-4.1, -0.1) {\small $M_1$};
\node[below] at (-1.9, -0.1) {\small $M_2$};
 
 \node  at (0.5, 0.8)  { $L_E$};

 \node  at (3,-1)  {\small $K$};

  \node  at (-3,-1.0)  { \small $L_E^{-1}(K)$};

      \end{tikzpicture}
    \caption{An example of a collision-collision orbit $K$ such that the preimage $L_E^{-1}(K)$ is a regular orbit }
 \label{fig:ex9}
 \end{figure}
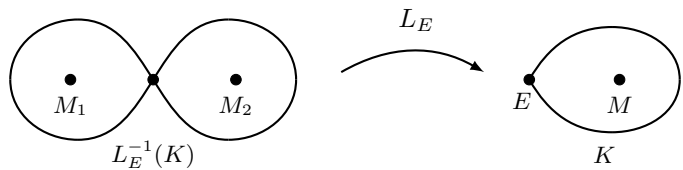  
Therefore, the invariant $\mathcal{J}_E$ cannot be computed by counting double points.
Instead, we make use of the explicit formula for $\mathcal{J}_0$ obtained in the previous section as follows.
    
As introduced earlier, we denote by $(\lambda, \nu)$  the elliptic coordinates with foci at $M_1 = (-1, 0)$ and $M_2 = (1,0)$   (recall that $L_E^{-1}(M)=\{M_1, M_2\}$). We also denote   by $(u,v)$   the elliptic coordinates with foci at $E=0$ and $M$. In these coordinates, the Levi-Civita mapping $L_E$ is given by $L_E(\lambda, \nu) = (u,v)$.  
%, we define the elliptic coordinates $(\xi, \eta)$  as %to be
%    \[
 %  2 \cosh \xi =   \lvert z - M_1 \rvert + \lvert z -M_2 \rvert  , \quad 2\cos \eta =  \lvert z - M_1    \rvert  - \lvert z-M_2 \rvert 
 %   \]
  %  whose inverse is given by
 %   \[
 %  z = (x,y) =\left( \cosh \xi \cos \eta , \sinh \xi \sin \eta \right).
  % \]
%Also, let $(u,v)$ be the elliptic coordinates with foci $E=0,M$. %In other words, we have
%\[
%\cosh \lambda = \lvert z - E \rvert + \lvert z - M \rvert, \quad \cos \nu = \lvert z - E \rvert - \lvert z - M \rvert.
%\]
%We also find
%\begin{align*}
%L_E(z) &= L_E(x,y) \\
%&= L_E( \cosh \xi \cos \eta, \sinh \xi \sin \eta )\\
%&= \left(  ( \cosh \xi \cos \eta)^2 - (\sinh \xi \sin \eta )^2, 2 \cosh \xi \cos \eta  \sinh \xi \sin \eta \right)\\
%&= \frac{1}{2} \left(     \cosh 2 \xi \cos 2 \eta +1 ,  \sinh 2\xi \sin  2 \eta \right)\\
%&=  ( \cosh u, \cos v).
%\end{align*}
 See Figure \ref{fig:ex10}.

 \begin{figure}[h]
     \centering
\begin{tikzpicture}

\draw[thick, blue] (0.5,0) to (4.5,0);
\draw[thick, red] (0.5,0) to (3.3,0);
\draw[thick, green] (0.5,0) to (1.9,0);

\draw[thick, green] (-2.5, 1.5) to (-2.5, -1.5);

\draw[thick, blue] (-4.5,0) to (-0.5,0);
\draw[thick, red] (-3.3,0) to (-1.7,0);
 \filldraw[draw=black, fill=black] (-3.3,0) circle (0.07cm);
 \filldraw[draw=black, fill=black] (-1.7,0) circle (0.07cm);
  \filldraw[draw={rgb:black,5;white,5}, fill={rgb:black,5;white,5}] (-2.5,0) circle (0.08cm);
 \node [below]  at (-3.3 ,-0.1)  {\small $M_1$};
 \node [below]  at (-1.7 ,-0.1)  {\small $M_2$};
 \node [below]  at (-2.35 ,-0.1)  { $\color{rgb:black,5;white,5}0$};

 \filldraw[draw=black, fill=black] (3.3,0) circle (0.07cm);
  \filldraw[draw={rgb:black,5;white,5}, fill={rgb:black,5;white,5}] (1.9,0) circle (0.08cm);
 \node [below]  at (3.3 ,-0.1)  {\small $M$};
 \node [below]  at (1.9 ,-0.1)  { \small $\color{rgb:black,5;white,5}0$};

 \node   at (-2.5  , -2)  { $(\lambda, \nu)$};

 \node   at (2.5 ,-2)  { $(u,v)$};

\draw[thick] (-2.5,0) ellipse (1.3cm and 1cm);
\draw[thick] (2.5,0) ellipse (1.3cm and 1cm);
 \filldraw[draw=purple, fill=purple] (-2.5,1) circle (0.09cm);
 \filldraw[draw=purple, fill=purple] (-2.5,-1) circle (0.09cm);
 \filldraw[draw=purple, fill=purple] (1.2,0) circle (0.09cm);

 \draw[thick, ->, gray] (-0.8,0.7) [out=30, in=150] to (0.8 ,0.7); 

 \node[gray]  at (0 , 1.3)  { $L_E$};

     \begin{scope}[yscale=-1]
      \draw[thick, -> ] (-1,0.7) [out=30, in=150] to (1 ,0.7); 
 \node   at (0 , 1.3)  { double cover};
 \end{scope}
  %  \draw[thick] (2,0) [out=60, in=180] to (3,0.7); 
%             \filldraw[draw=black, fill=black] (-3,0) circle (0.07cm);
% \node  at (0.5, 0.8)  { $L_E$};

      \end{tikzpicture}
\caption{The map $L_E$ in the elliptic coordinates}
 \label{fig:ex10}
 \end{figure}
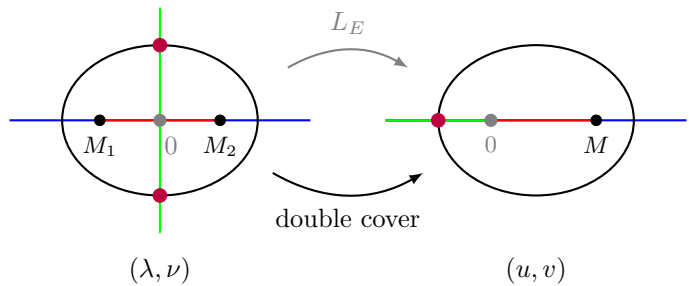

Since $K$ lies on a $T_{k,l}$-torus, the variables $u$ and $v$ complete $k$ and $l$ cycles, respectively. Under the Levi-Civita transformation $L_E$, this implies that $\lambda$ and $\nu$ complete $2k$ and $l$ cycles, respectively. Hence, $\widetilde{K} $  is a $T_{k,l/2}$-type orbit (note that gcd$(k,l/2)=1$). Thus, by  Proposition \ref{Prop:J0}, we obtain  
\[
\mathcal{J}_E(K) =   \frac{kl}{2}-k +1  \quad \text{if $l$ is even.}
\]
 
 \medskip

To determine the invariant $\mathcal{J}_M$, we normalize the positions of the two primaries by setting $E=(-1,0)$ and $M=(0,0)$ so that the Levi-Civita map around $M$ becomes $L_M(z) = z^2$. %We write $L_M^{-1}(E) = \{E_1 = (0,1), E_2 = (0,-1)\}$. 
Let $\widetilde{K}_1$ and $  \widetilde{K}_2$ denote the two connected components of the preimage $L_M^{-1}(K)$. 
As in the previous case, both components are $T_{k,l/2}$-type orbits, and hence   we have 
\begin{equation}\label{eq:JMleven}
\mathcal{J}_M(K) = \frac{kl}{2}-k +1  \quad \text{if $l$ is even.}
\end{equation}

To make the presentation more transparent, we directly compute $\mathcal{J}_M(K)$. Note that since $K$ collides only with $E$, both components of $L_M^{-1}(K)$ are collision-collision orbits. See Figure \ref{fig:ex9LM}. Without loss of generality, we may focus on $\widetilde{K}_1$, as the value of $\mathcal{J}_M$ is independent of the choice of component. We then proceed to count self-intersection points of $\widetilde{K}_1$.

 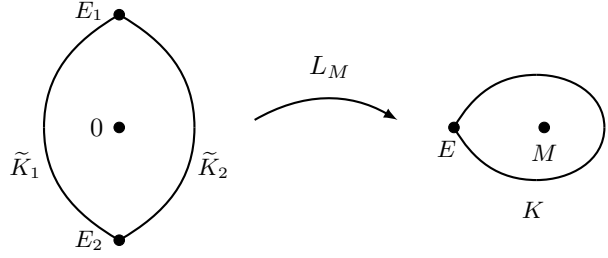
\begin{figure}[h]
     \centering
\begin{tikzpicture} 

 \begin{scope}[xshift=0.15cm]
\node[below] at (1.9,-0.05) {\small $E$};
     \node[below] at (3.2,-0.1) {\small $M$};

    \draw[thick] (2,0) [out=60, in=180] to (3.1,0.7); 
    \draw[thick] (3.1,0.7) [out=0, in=90] to (4,0); 
\begin{scope}[yscale=-1]
    \draw[thick] (2,0) [out=60, in=180] to (3.1,0.7); 
    \draw[thick] (3.1,0.7) [out=0, in=90] to (4,0); 
    \end{scope}
       \filldraw[draw=black, fill=black] (2,0) circle (0.07cm);
              \filldraw[draw=black, fill=black] (3.2,0) circle (0.07cm);
\end{scope}

\begin{scope}[xshift=0.7cm]
    \draw[thick] (-4,0 ) [out=90, in=210] to (-3,1.5); 
\begin{scope}[yscale=-1]
    \draw[thick] (-4,0 ) [out=90, in=210] to (-3,1.5); 
\end{scope}
\begin{scope}[xscale=-1, xshift=6cm]
       \draw[thick] (-4,0 ) [out=90, in=210] to (-3,1.5); 
\begin{scope}[yscale=-1]
    \draw[thick] (-4,0 ) [out=90, in=210] to (-3,1.5); 
\end{scope}
\end{scope}
                \filldraw[draw=black, fill=black] (-3,-1.5) circle (0.07cm);
                          \filldraw[draw=black, fill=black] (-3,1.5) circle (0.07cm);
             \filldraw[draw=black, fill=black] (-3,0) circle (0.07cm);
\end{scope}

    \draw[thick, ->] (-0.5,0.1) [out=30, in=150] to (1.4,0.1);

%\node[below] at (-4.1, -0.1) {\small $M_1$};
%\node[below] at (-1.9, -0.1) {\small $M_2$};
 
 \node  at (0.5, 0.8)  { $L_M$};
\node  at (-2.7,1.55)  {\small $E_1$};
\node  at (-2.7,-1.5)  {\small $E_2$};
 \node  at (3.2,-1.1)  {\small $K$};
\node at (-2.6,  0 ) {$0$};
  % \node  at (-2.3,-2.2)  { \small $L_M^{-1}(K)$};
\node at (-3.55, -0.5) {\small $\widetilde{K}_1$};
\node at (-1.05, -0.5) {\small $\widetilde{K}_2$};

      \end{tikzpicture}
    \caption{An example of a collision-collision orbit $K$ such that the preimage  $L_M^{-1}(M)$ consists of two collision-collision orbits of type II }
 \label{fig:ex9LM}
 \end{figure}

If $l \in 4\N_0+2$, then $l/2$ is odd and, hence $\widetilde{K}_1$ undergoes collisions at both $E_1 = (0,1)$ and $E_2 = (0,-1)$ and is symmetric with respect to the $x$-axis.  
In view of Proposition \ref{prop:quadruple}, it follows that $\widetilde{K}_1$ has $k(l/2 -1)/2$ double points. Therefore, we obtain 
\[
\mathcal{J}_M(K) = k\left( \frac{l}{2} -1 \right) + \frac{1}{2} + \frac{1}{2} = \frac{kl}{2} -k+1 \quad \text{if $l \in 4\N_0+2$.}
 \]
  Similarly, if $l \in 4\N$, then $l/2$ is even, and   $\widetilde{K}_1$ undergoes two collisions with either $E_1$ or $E_2$.  
 Again by  Proposition \ref{prop:quadruple}, the number of double points is given by   $(k l/2 - k -1)/2$. Using this, we compute  
 \[
 \mathcal{J}_M(K) = \frac{kl}{2} - k -1 + \frac{4}{2} + 0 = \frac{kl}{2} -k+1 \quad \text{if $l \in 4\N$.}
 \]
In both cases, we recover the same formula as in \eqref{eq:JMleven}.

   \medskip

   \noindent
   \textit{Case 2.} $l$ is odd.

As in the case $l$ is even, we normalize the positions of the primaries setting $E=(0,0)$ and $ M=(1,0)$,  so that $L_E(z)=z^2$.
Since  $w_E(K)$ is odd, the preimage $L_E^{-1}(K)$ consists of a single orbit colliding with both $M_1 = (-1,0)$ and $M_2 = (1,0)$, and the origin is no longer a singularity. 
Because $L_E$ is a 2-to-1 map,  Proposition \ref{prop:quadruple} implies that $L_E^{-1}(K)$ has $k(l-1)$ double points. Similarly, $L_M^{-1}(K)$ is also a single orbit that has collisions at both $E_1$ and $ E_2$ and has the same number of double points.  
In conclusion, the invariants $\mathcal{J}_E$ and $\mathcal{J}_M$  are given by
\[
\mathcal{J}_E (K) = \mathcal{J}_M (K) = 2k(l-1) +1 \quad \text{if $l$ is odd}.
\]
These formulas are consistent with the relation established in \cite[Corollary 6.23]{CFZ23}.
 
 \bigskip

 We have proven

\begin{proposition}\label{Prop:JEJM}
Every $T_{k,l}$-type orbit in the $L$-region satisfies
 \[
 \mathcal{J}_E    = \mathcal{J}_M   = \begin{cases} \frac{ kl}{2} - k+1 & \text{if $l$ is even,}  \\ \\ \displaystyle  2kl-2k+1 & \text{if $l$ is odd.}\end{cases}
 \]
 \end{proposition}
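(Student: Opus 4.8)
The plan is to exploit the fact that $\mathcal{J}_E$ and $\mathcal{J}_M$ are invariant under two-center Stark homotopies, so that they may be evaluated on any convenient representative of the homotopy class attached to a given $T_{k,l}$-torus. By Lemma~\ref{lem:Lregion} each such torus contains an explicit collision-collision orbit $K$, so I would compute the two invariants on this distinguished orbit and then split the analysis according to the parity of $l$, exactly as in the computation of $\mathcal{J}_0$.

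Suppose first that $l$ is even. Then $K$ is a $q_1$-symmetric collision-collision orbit of type~I, say colliding twice with $E$. To obtain $\mathcal{J}_E(K)$ I would normalise $E=(0,0)$, $M=(1,0)$ and apply the Levi-Civita map $L_E(z)=z^2$. By Lemma~\ref{lem:simgleLE} the preimage $\widetilde K = L_E^{-1}(K)$ is a single, doubly symmetric \emph{regular} curve through the origin, hence no longer a distinguished orbit; the point is to recognise it. Passing to elliptic coordinates adapted to the lifted foci $M_1,M_2$, the branched double cover converts the $k$ cycles in $u$ and $l$ cycles in $v$ into $2k$ cycles in $\lambda$ and $l$ cycles in $\nu$, which identifies $\widetilde K$ as a $T_{k,l/2}$-type orbit (here $\gcd(k,l/2)=1$). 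Since $\mathcal{J}_E(K)=J^+(\widetilde K)+\tfrac12 w_{M_1}(\widetilde K)^2+\tfrac12 w_{M_2}(\widetilde K)^2=\mathcal{J}_0(\widetilde K)$, Proposition~\ref{Prop:J0} immediately yields $\mathcal{J}_E(K)=k(l/2)-k+1$. For $\mathcal{J}_M(K)$ I would instead place $M$ at the origin; since $K$ avoids $M$, the winding number $w_M(K)$ is even and $L_M^{-1}(K)$ splits into two congruent collision-collision orbits, again of type $T_{k,l/2}$, giving the same value. As a cross-check I would count self-intersections of one component via Proposition~\ref{prop:quadruple}, distinguishing $l\in 4\N$ (whose lift is a type~I orbit) from $l\in 4\N_0+2$ (whose lift is a type~II orbit), and confirm through Proposition~\ref{prop:formula of collision orbits} that both subcases return $kl/2-k+1$.

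Now suppose $l$ is odd. Then $K$ is a collision-collision orbit of type~II, colliding once with each primary, so both $w_E(K)$ and $w_M(K)$ are odd. The cleanest route is to invoke the relation of \cite[Corollary 6.23]{CFZ23} quoted above, which gives $\mathcal{J}_E(K)=\mathcal{J}_M(K)=2\mathcal{J}_0(K)-1$; substituting $\mathcal{J}_0(K)=kl-k+1$ from Proposition~\ref{Prop:J0} produces $2kl-2k+1$. Independently, I would verify this by direct regularisation: because $w_E(K)$ is odd, $L_E^{-1}(K)$ is a single orbit colliding with both $M_1$ and $M_2$ (of type~II), and as $L_E$ is two-to-one it carries the $N=k(l-1)/2$ self-intersections of $K$ to $k(l-1)$ double points; Proposition~\ref{prop:formula of collision orbits} then gives $\mathcal{J}_E(K)=2k(l-1)+1=2kl-2k+1$, with the same value for $\mathcal{J}_M(K)$ by the $q_1$-axis symmetry.

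The main obstacle I anticipate is the bookkeeping under the branched double covers: correctly tracking how the numbers of $\lambda$- and $\nu$-cycles transform, verifying the coprimality $\gcd(k,l/2)=1$ so that the lifted orbit genuinely is a single $T_{k,l/2}$-type orbit rather than a multiply covered one, and, most delicately, determining the collision type (I versus II) of each preimage component. This last point is what forces the $l \bmod 4$ case split in the verification of $\mathcal{J}_M$ for $l$ even, and it is where Lemma~\ref{lem:Lregion} and the structural facts recorded in Remark~\ref{rmk:useful} are indispensable.
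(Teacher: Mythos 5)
Your proposal is correct and follows essentially the same route as the paper: Levi-Civita lifting with the cycle-count identification of $L_E^{-1}(K)$ as a $T_{k,l/2}$-type orbit (so Proposition~\ref{Prop:J0} applies) when $l$ is even, the $l \bmod 4$ case split via Propositions~\ref{prop:quadruple} and~\ref{prop:formula of collision orbits} for the $\mathcal{J}_M$ verification, and the type~II count $2k(l-1)+1$ when $l$ is odd. The only difference is cosmetic: for odd $l$ you lead with \cite[Corollary 6.23]{CFZ23} and verify by direct regularization, whereas the paper computes directly and cites the corollary as a consistency check.
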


%%%%%%%%%%%%%%%%%%%%%%%%%%%%%%%%%%%%%%%%%%%%%%%%%%%%%%%%%%%%%%%%%%%%%%%%%%%
%%%%%%%%%%%%%%%%%%%%%%%%%%%%%%%%%%%%%%%%%%%%%%%%%%%%%%%%%%%%%%%%%%%%%%%%%%%
%%%%%%%%%%%%%%%%%%%%%%%%%%%%%%%%%%%%%%%%%%%%%%%%%%%%%%%%%%%%%%%%%%%%%%%%%%%
%%%%%%%%%%%%%%%%%%%%%%%%%%%%%%%%%%%%%%%%%%%%%%%%%%%%%%%%%%%%%%%%%%%%%%%%%%%
%%%%%%%%%%%%%%%%%%%%%%%%%%%%%%%%%%%%%%%%%%%%%%%%%%%%%%%%%%%%%%%%%%%%%%%%%%%

 \subsection{The invariant  $(\mathcal{J}_{E,M}, n)$}\label{subsec:EM}

 We normalize the positions of the two primaries to $E=(-1,0)$ and $M=(1,0)$ and denote by $(\lambda,\nu)$ the associated elliptic coordinates as before.

The coordinate line $\lambda = \lambda_0 >0 $  corresponds to an ellipse given by 
\begin{equation}\label{eq:ell} 
\frac{x^2}{a^2} + \frac{y^2}{b^2} = 1, \quad \text{where }\; a = \cosh \lambda_0  \; \text{ and } \; b = \sinh \lambda_0 ,
\end{equation} 
while the cooordinate line $\nu =\nu_0 \in (-\pi, 0)$ corresponds to a hyperbola given by 
\begin{equation}\label{eq:hyper} 
\frac{x^2}{a^2} - \frac{y^2}{b^2} = 1, \quad \text{where } \; a = \cos \nu_0 \;  \text{ and } \; b = \sin \nu_0  ,  
\end{equation}
provided that $\nu_0 \neq -\pi/2$. In the case  $\nu_0=-\pi/2$, the coordinate line coincides with the $y$-axis.

Let $B \colon \C \setminus \{ 0 \} \to \C$ denote the Birkhoff regularization map as defined in \eqref{eq:Birkhoff}. We extend it to a branched double covering $B \colon  \C \cup \{ \infty \}  \to \C \cup \{ \infty\} $ by defining it to send both $0$ and $\infty$ to $\infty$. In the following lemma, we collect some basic properties of this extended map. The proofs are straightforward and omitted.

%Below we list some   properties of the Birkhoff regularization mapping 
%\[
%B \colon \C \cup \{\infty\} \to \C \cup \{ \infty\}, \quad B(z) = \frac{1}{2} \left( z + \frac{1}{z} \right)= \frac{1}{2}\left( x + \frac{x}{x^2 +y^2} \right) + i \frac{1}{2}\left( y - \frac{y}{x^2+y^2}\right)\] 
%whose proofs are straightforward.
\begin{lemma}\label{lem:propofB} The Birkhoff regularization map $B \colon \C \cup \{\infty\} \to \C \cup \{ \infty \}$ satisfies the following properties:  
    
\begin{enumerate}
%\item $B$ is a 2-to-1 covering branched at $E,M$.
%\item $B(0) = B(\infty)=   \infty$.
\item  The arcs $(0,1) \times \{ 0 \}$ and $(1,\infty) \times \{0\}$ are bijectively mapped onto $(1, \infty) \times \{0\}$, with the former reversing and the latter preserving it.   In a similar way, the arcs $(-\infty, -1) \times \{ 0 \}$ and $(-1, 0) \times \{ 0 \}$ are bijectively mapped onto $(-\infty, -1) \times \{ 0 \}$,   preserving and reversing the orientations, respectively.

\item  The preimage of an ellipse of the form \eqref{eq:ell} is the union of the two circles  of radii $a \pm b$, centered at the origin. In particular, the preimage of the coordinate line $\lambda =0$ is the unit circle.

\item Let $\nu=\nu_0$ be a coordinate line corresponding to a hyperbola of the form \eqref{eq:hyper}. If $\nu_0 \in (-\pi/2, 0)$, then its preimage is given by the pair of rays $\{ y = \pm (b/a)x \mid x >0 \}$. If $\nu_0 \in (-\pi, -\pi/2)$, then the preimage is $\{ y = \pm (b/a)x \mid x < 0 \}$. 
The preimage of $\nu=-\pi/2$, i.e.~the $q_2$-axis, is the $y$-axis, see Figure \ref{fig:ex11} for illustrations.

 \begin{figure}[h]
     \centering
\begin{tikzpicture} [scale=0.8]

\draw[thick, ->] (-5,0) to (-1,0);
\draw[thick,->] (-3,-1.8) to (-3,1.8);
\draw[thick,->] (1,0) to (5,0);
\draw[thick,->] (3,-1.8) to (3,1.8);
    \draw[thick, ->] (-0.8,1) [out=30, in=150] to (0.8,1); 
\node  at (0, 1.5)  { $B$};

\draw[thick, cyan, postaction={decorate},  decoration={markings, mark=at position 0.5 with {\arrow{<}}}] 
          (-3,0) -- (-2,0);

\draw[thick, cyan, postaction={decorate},  decoration={markings, mark=at position 0.8 with {\arrow{>}}}] 
          (-2,0) -- (-1.2,0);

\draw[thick, cyan, postaction={decorate},  decoration={markings, mark=at position 0.8 with {\arrow{>}}}] 
          (4,0) -- (4.8,0);

\draw[thick, orange, postaction={decorate},  decoration={markings, mark=at position 0.5 with {\arrow{<}}}] 
          (-3,0) -- (-4,0);

\draw[thick, orange, postaction={decorate},  decoration={markings, mark=at position 0.8 with {\arrow{>}}}] 
          (-4,0) -- (-5,0);

\draw[thick, orange, postaction={decorate},  decoration={markings, mark=at position 0.8 with {\arrow{>}}}] 
          (2,0) -- (1,0);

%\draw (-3,0) ellipse (2cm and 1cm);

 %  \draw[thick, blue, ->] (-3,1) arc(90:270:1cm and 1cm);
 % \draw[thick, blue, -<] (-3,-1) arc(-90:90:1cm and 1cm);

 %  \filldraw[draw={rgb:black,5;white,5}, fill={rgb:black,5;white,5}] (1.9,0) circle (0.08cm);
      
  %  \draw[thick] (2,0) [out=60, in=180] to (3,0.7); 
%             \filldraw[draw=black, fill=black] (-3,0) circle (0.07cm);
% \node  at (0.5, 0.8)  { $L_E$};

\draw[thick, blue, postaction={decorate}, decoration={markings, mark=at position 0.3 with {\arrow{>}}}] 
        (-4,0) arc[start angle=180, end angle=0, x radius=1cm, y radius=1cm];
\draw[thick, blue, postaction={decorate}, decoration={markings, mark=at position 0.3 with {\arrow{>}}}] 
        (-4,0) arc[start angle=180, end angle=360, x radius=1cm, y radius=1cm];

\draw[thick, blue, postaction={decorate},  decoration={markings, mark=at position 0.3 with {\arrow{>}}}] 
          (2,0) -- (4,0);

           \filldraw[draw=black, fill=black] (-4,0) circle (0.08cm);
           \filldraw[draw=black, fill=black] (-2,0) circle (0.08cm);      
           \filldraw[draw=black, fill=black] ( 4,0) circle (0.08cm);
           \filldraw[draw=black, fill=black] (2,0) circle (0.08cm);
     \filldraw[draw=black, fill={rgb:black,5;white,5}] ( -3,1) circle (0.08cm);
           \filldraw[draw=black, fill={rgb:black,5;white,5}] (-3,-1) circle (0.08cm);
           \filldraw[draw=black, fill={rgb:black,5;white,5}] (3,0) circle (0.08cm);

\node [below] at (-1.8, 0)  { $1$};
\node[below]  at (-4.3, 0)  { $-1$};
\node [right] at (-3, 1.2)  { $1$};
\node[right]  at (-3, -1.2)  { $-1$};
\node [below] at (2, -0.1)  { $-1$};
\node[below]  at (4, -0.1)  { $1$};

%%%%%%%%%%%%%%%%%%%%%%%%%%%%%%%%%%%%%%%%%Fact(1)%%%%%%%%%%%%%%%%%%%%%%%%%%%%%%%%%%%%%%%%%%%%%%%

\begin{scope}[yshift=-4.5cm]

\draw[thick, ->] (-5,0) to (-1,0);
\draw[thick,->] (-3,-1.8) to (-3,1.8);
\draw[thick,->] (1,0) to (5,0);
\draw[thick,->] (3,-1.8) to (3,1.8);
    \draw[thick, ->] (-0.8,1) [out=30, in=150] to (0.8,1); 
\node  at (0, 1.5)  { $B$};

%\draw (-3,0) ellipse (2cm and 1cm);

 %  \draw[thick, blue, ->] (-3,1) arc(90:270:1cm and 1cm);
 % \draw[thick, blue, -<] (-3,-1) arc(-90:90:1cm and 1cm);

 %  \filldraw[draw={rgb:black,5;white,5}, fill={rgb:black,5;white,5}] (1.9,0) circle (0.08cm);
      
  %  \draw[thick] (2,0) [out=60, in=180] to (3,0.7); 
%             \filldraw[draw=black, fill=black] (-3,0) circle (0.07cm);
% \node  at (0.5, 0.8)  { $L_E$};

\draw[thick,  magenta, postaction={decorate}, decoration={markings, mark=at position 0.4 with {\arrow{>}}}] 
        (-1.6,0) arc[start angle=0, end angle=360, x radius=1.4cm, y radius=1.4cm];
 
\draw[thick,  magenta, postaction={decorate}, decoration={markings, mark=at position 0.4 with {\arrow{<}}}] 
        (-2.4,0) arc[start angle=0, end angle=360, x radius=0.6cm, y radius=0.6cm];

 \draw[thick, blue, postaction={decorate}, decoration={markings, mark=at position 0.8 with {\arrow{<}}}]        (-2,0) arc[start angle=0, end angle=180, x radius=1cm, y radius=1cm];

 \draw[thick, magenta, postaction={decorate}, decoration={markings, mark=at position 0.4 with {\arrow{>}}}]        (4.5,0) arc[start angle=0, end angle=360, x radius=1.5cm, y radius=1cm];

 \draw[thick, blue, postaction={decorate}, decoration={markings, mark=at position 0.25 with {\arrow{>}}}]        (-4,0) arc[start angle=180, end angle=360, x radius=1cm, y radius=1cm];

\draw[thick, blue, postaction={decorate},  decoration={markings, mark=at position 0.35 with {\arrow{>}}}] 
          (2,0) -- (4,0);

            \filldraw[draw=black, fill=black] (-4,0) circle (0.08cm);
            \filldraw[draw=black, fill=black] (-2,0) circle (0.08cm);      
            \filldraw[draw=black, fill=black] ( 4,0) circle (0.08cm);
            \filldraw[draw=black, fill=black] (2,0) circle (0.08cm);
    % \filldraw[draw=black, fill={rgb:black,5;white,5}] ( -3,1) circle (0.08cm);
     %      \filldraw[draw=black, fill={rgb:black,5;white,5}] (-3,-1) circle (0.08cm);
      %     \filldraw[draw=black, fill={rgb:black,5;white,5}] (3,0) circle (0.08cm);

 \node [below] at (-1.8, 0)  { $1$};
 \node[below]  at (-4.3, 0)  { $-1$};
%\node [right] at (-3, 1.2)  { $1$};
%\node[right]  at (-3, -1.2)  { $-1$};
 \node [below] at (2, -0.1)  { $-1$};
 \node[below]  at (4, -0.1)  { $1$};

\end{scope}

%%%%%%%%%%%%%%%%%%%%%%%%%%%%%%%%%%%%%%%%%Fact(2)%%%%%%%%%%%%%%%%%%%%%%%%%%%%%%%%%%%%%%%%%%%%%%%

\begin{scope}[yshift=-9.2cm]

\draw[thick, ->] (-5,0) to (-1,0);
\draw[thick,->] (-3,-2) to (-3,2);
\draw[thick,->] (1,0) to (5,0);
\draw[thick,->] (3,-2) to (3,2);
    \draw[thick, ->] (-0.8,1) [out=30, in=150] to (0.8,1); 
\node  at (0, 1.5)  { $B$};

\draw[thick, olive, postaction={decorate},  decoration={markings, mark=at position 0.9 with {\arrow{>}}}] 
          (-3,0) -- (-3,1.8);

\draw[thick, olive, postaction={decorate},  decoration={markings, mark=at position 0.5 with {\arrow{<}}}] 
          (-3,0) -- (-3,1);

\draw[thick, olive, postaction={decorate},  decoration={markings, mark=at position 0.6 with {\arrow{>}}}] 
          ( 3,0) -- ( 3,1.8);

\draw[thick, blue, postaction={decorate},  decoration={markings, mark=at position 0.9 with {\arrow{>}}}] 
          (-3,0) -- (-3,-1.8);

\draw[thick, blue, postaction={decorate},  decoration={markings, mark=at position 0.5 with {\arrow{<}}}] 
          (-3,0) -- (-3,-1);

\draw[thick, blue   , postaction={decorate},  decoration={markings, mark=at position 0.6 with {\arrow{>}}}] 
          ( 3,0) -- ( 3,-1.8);

\draw[thick, cyan  , postaction={decorate},  decoration={markings, mark=at position 0.6 with {\arrow{<}}}] 
          ( -3,0) -- (-1.2, 1.8  );

\draw[thick, cyan  , postaction={decorate},  decoration={markings, mark=at position 0.6 with {\arrow{<}}}] 
          ( -3,0) -- (-1.2, -1.8  );

\draw[thick,  magenta  , postaction={decorate},  decoration={markings, mark=at position 0.6 with {\arrow{<}}}] 
          ( -3,0) -- (-4.8, 1.8  );

\draw[thick, magenta  , postaction={decorate},  decoration={markings, mark=at position 0.6 with {\arrow{<}}}] 
          ( -3,0) -- (-4.8, -1.8  );

\draw[thick, cyan  , postaction={decorate},  decoration={markings, mark=at position 0.6 with {\arrow{>}}}] 
          ( -3,0) -- (-2.2, 0.8  );

\draw[thick, cyan  , postaction={decorate},  decoration={markings, mark=at position 0.6 with {\arrow{>}}}] 
          ( -3,0) -- (-2.2, -0.8  );

\draw[thick, magenta , postaction={decorate},  decoration={markings, mark=at position 0.6 with {\arrow{>}}}] 
          ( -3,0) -- (-3.8,0.8  );

\draw[thick, magenta , postaction={decorate},  decoration={markings, mark=at position 0.6 with {\arrow{>}}}] 
          ( -3,0) -- (-3.8, -0.8  );

%\draw (-3,0) ellipse (2cm and 1cm);

 %  \draw[thick, blue, ->] (-3,1) arc(90:270:1cm and 1cm);
 % \draw[thick, blue, -<] (-3,-1) arc(-90:90:1cm and 1cm);

 %  \filldraw[draw={rgb:black,5;white,5}, fill={rgb:black,5;white,5}] (1.9,0) circle (0.08cm);
      
  %  \draw[thick] (2,0) [out=60, in=180] to (3,0.7); 
%             \filldraw[draw=black, fill=black] (-3,0) circle (0.07cm);
% \node  at (0.5, 0.8)  { $L_E$};

 \draw[thick, lightgray ]        (-2,0) arc[start angle=0, end angle=180, x radius=1cm, y radius=1cm];

 \draw[thick, lightgray]        (-4,0) arc[start angle=180, end angle=360, x radius=1cm, y radius=1cm];

\draw[thick, magenta, postaction={decorate},  decoration={markings, mark=at position 0.4 with {\arrow{<}}}] 
          (2.5,0)  [out=90, in=330] to (1.2,1.8); 
\begin{scope}[yscale=-1]

\draw[thick, magenta, postaction={decorate},  decoration={markings, mark=at position 0.4 with {\arrow{<}}}] 
          (2.5,0)  [out=90, in=330] to (1.2,1.8); 
          \end{scope}

\begin{scope}[xscale=-1, xshift=-6cm]
    
\draw[thick, cyan, postaction={decorate},  decoration={markings, mark=at position 0.4 with {\arrow{<}}}] 
          (2.5,0)  [out=90, in=330] to (1.2,1.8); 
\begin{scope}[yscale=-1]

\draw[thick, cyan, postaction={decorate},  decoration={markings, mark=at position 0.4 with {\arrow{<}}}] 
          (2.5,0)  [out=90, in=330] to (1.2,1.8); 
          \end{scope}
\end{scope}

           \filldraw[draw=black, fill=cyan] (-2.28,0.72) circle (0.08cm);
           \filldraw[draw=black, fill=cyan] (-2.28,-0.72) circle (0.08cm); 
           \filldraw[draw=black, fill=magenta] (-3.72,0.72) circle (0.08cm);
           \filldraw[draw=black, fill=magenta] (-3.72,-0.72) circle (0.08cm);

           \filldraw[draw=black, fill=black] (-4,0) circle (0.08cm);
           \filldraw[draw=black, fill=black] (-2,0) circle (0.08cm);      
           \filldraw[draw=black, fill=black] ( 4,0) circle (0.08cm);
           \filldraw[draw=black, fill=black] (2,0) circle (0.08cm);
     \filldraw[draw=black, fill={rgb:black,5;white,5}] ( -3,1) circle (0.08cm);
           \filldraw[draw=black, fill={rgb:black,5;white,5}] (-3,-1) circle (0.08cm);
           \filldraw[draw=black, fill={rgb:black,5;white,5}] (3,0) circle (0.08cm);
           \filldraw[draw=black, fill=magenta] (2.5,0) circle (0.08cm);
           \filldraw[draw=black, fill=cyan] (3.5,0) circle (0.08cm);

 \node [below] at (-1.8, 0)  { $1$};
 \node[below]  at (-4.3, 0)  { $-1$};
 \node [right] at (-3, 1.2)  { $1$};
 \node[right]  at (-3, -1.2)  { $-1$};
 \node [below] at (2, -0.1)  { $-1$};
 \node[below]  at (4, -0.1)  { $1$};

\end{scope}

%%%%%%%%%%%%%%%%%%%%%%%%%%%%%%%%%%%%%%%%%Fact(3)%%%%%%%%%%%%%%%%%%%%%%%%%%%%%%%%%%%%%%%%%%%%%%%

      \end{tikzpicture}
\caption{Visualization of properties of the Birkhoff regularization map $B$}
\label{fig:ex11}
 \end{figure}
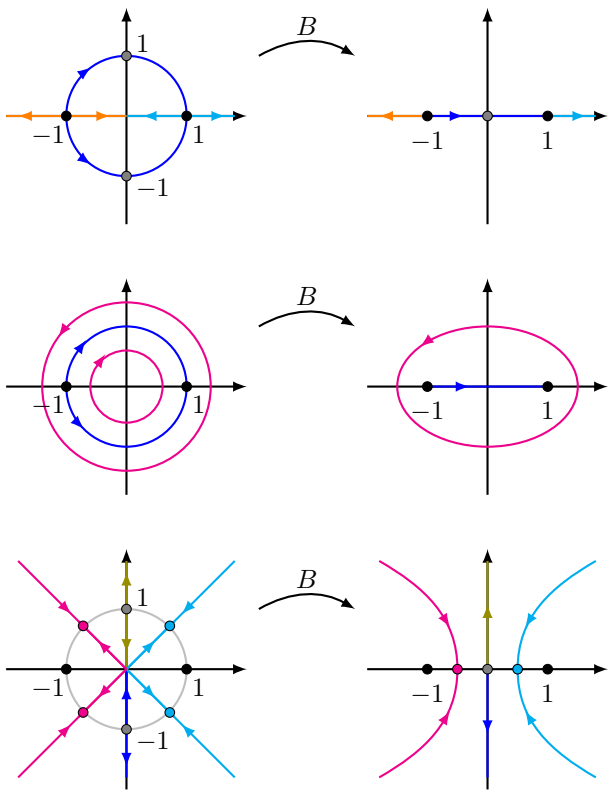

\end{enumerate}
\end{lemma}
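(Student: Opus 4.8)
The plan is to carry out every computation in polar coordinates. Writing $z = re^{i\theta}$ with $r > 0$ and $\theta \in (-\pi,\pi]$, a one-line computation gives
\[
B(z) = \frac{1}{2}\left( r e^{i\theta} + \frac{1}{r} e^{-i\theta} \right), \qquad \re B(z) = \frac{1}{2}\left( r + \frac{1}{r} \right)\cos\theta, \qquad \im B(z) = \frac{1}{2}\left( r - \frac{1}{r} \right)\sin\theta.
\]
Since $B$ is (up to scale) the Joukowski map, the confocal structure with foci at $\pm 1$ is already encoded in these two identities, and all three assertions will follow by reading off the appropriate level sets. In particular, $B(1/z) = B(z)$, which explains the two-sheeted behaviour appearing in (2) and (3).

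To prove (1) I would restrict to the real axis. For $\theta = 0$ one has $B(r) = \tfrac12(r + 1/r)$ with $B'(r) = \tfrac12(1 - r^{-2})$, negative on $(0,1)$ and positive on $(1,\infty)$, while $B(r) \to +\infty$ as $r \to 0^+$ or $r \to \infty$ and $B(1) = 1$. Hence $B$ maps $(0,1)$ decreasingly and $(1,\infty)$ increasingly, each bijectively onto $(1,\infty)$, which yields the two claimed orientations. The statements for the arcs $(-\infty,-1)\times\{0\}$ and $(-1,0)\times\{0\}$ follow immediately from the oddness $B(-z) = -B(z)$.

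For (2) I would fix $r$ and let $\theta$ vary. The displayed real and imaginary parts give $\frac{x^2}{A^2} + \frac{y^2}{B^2} = 1$ with $A = \tfrac12(r + 1/r)$ and $B = \tfrac12\lvert r - 1/r\rvert$, and one checks $A^2 - B^2 = 1$, so the image is a confocal ellipse covered bijectively. Matching with $a = \cosh\lambda_0$, $b = \sinh\lambda_0$ forces $r + 1/r = 2\cosh\lambda_0$, i.e.\ $r = e^{\pm\lambda_0} = a \pm b$; these two reciprocal radii are precisely the two circles in the statement, which collapse to the unit circle when $\lambda_0 = 0$. For (3) I would instead fix $\theta$ and let $r$ range over $(0,\infty)$: the same two identities yield $\frac{x^2}{\cos^2\theta} - \frac{y^2}{\sin^2\theta} = 1$, a confocal hyperbola. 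Comparing with \eqref{eq:hyper} gives $\cos^2\theta = \cos^2\nu_0$ and $\sin^2\theta = \sin^2\nu_0$, hence $\theta = \pm\nu_0$ as the two preimage rays, with the half on which $x > 0$ or $x < 0$ selected by the sign of $\cos\theta = \cos\nu_0$; the degenerate case $\nu_0 = -\pi/2$ forces $\cos\theta = 0$ and therefore the full $y$-axis.

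I expect the only genuine care to lie in the bookkeeping for (3). One must verify that a single ray $\theta = \nu_0$ already sweeps out the entire branch as $r$ crosses $1$ (so that the pair $\theta = \pm\nu_0$ exactly realises the $2$-to-$1$ degree of $B$), and that the correct half-ray is picked out according to whether $\nu_0 \in (-\pi/2,0)$ or $\nu_0 \in (-\pi,-\pi/2)$; the orientation signs in (1) demand the analogous elementary sign check. Beyond these bookkeeping points, every step is a direct consequence of the two boxed identities, which is why the proof is routine enough to omit in the main text.
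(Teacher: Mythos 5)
Your proposal is correct. The paper itself gives no argument for this lemma (it is stated with ``the proofs are straightforward and omitted''), and your polar-coordinate computation is exactly the routine verification being alluded to: the two identities $\re B = \tfrac12\bigl(r+\tfrac1r\bigr)\cos\theta$ and $\im B = \tfrac12\bigl(r-\tfrac1r\bigr)\sin\theta$, together with $B(1/z)=B(z)$, $B(-z)=-B(z)$ and the relation $a\pm b = e^{\pm\lambda_0}$ (resp.\ the sign of $\cos\nu_0$ selecting the branch), settle all three parts, including the orientation and half-ray bookkeeping you flag.
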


Based on the properties described in the preceding lemma, we now investigate the preimage of a collision-collision orbit under the Birkhoff regularization map.

\begin{lemma}\label{lem:Bcoll}
Let $K$ be a collision-collision orbit lying on a $T_{k,l}$-torus. Then the following assertions hold:

  \begin{enumerate}
  \item The preimage $B^{-1}(K)$ consists of a single arc whose endpoints lie at $(\pm 1,0)$, depending on the parity of $l$. We denote this arc by  $\tilde K = B^{-1}(K)$.
  Moreover, $\tilde K$ is invariant under the map $\phi(z)=1/z. $   % See Figure \ref{fig:ex12}.

  %\begin{figure}[h]
  %\centering
  %\includegraphics[width=0.6\linewidth]{ex12.jpeg}
%\caption{The map $\phi$ {\color{red}DELETE LATER}}
%\label{fig:ex12}
 %   \end{figure}

\item Suppose that  $l$ is  even and that $K$ collides twice with $E$,  so that $\tilde K$ passes through the point $(-1,0)$.  Then the intersections of $\tilde K$ with the arc $(1,\infty) \times \{ 0 \}$ consist of $l/4$ double points if $l \in 4 \N$ and $(l-2)/4$ double points and a single point if $l \in 4 \N_0+2$.

%\begin{align*}
 %{  \text{s, $l/4 -1$ d's, $(-1,0)$, $l/4-1$ d's, s, $l/2$ d's}} & \quad  \text{ if } \;  l \in 4\N, \\
%{\text{ $(l-2)/4 $ d's, $(-1,0)$, $(l-2)/4 $ d's, s, $l/2-1$ d's, s}}  & \quad  \text{ if } \;  l \in 4\N_0+2,
%\end{align*}
%where $s$ indicates a single point, and $d$ indicates a double point.  Note that $(-1,0)$ is a double point. 

  \item Suppose that $l$ is odd, so that $K$ collides with both primaries. Then both points $(\pm1, 0)$ are single points of $\tilde{K}$, and $\tilde{K}$ intersects the arc $(1,\infty) \times \{ 0 \}$ at $(l-1)/2$ single points.

  \end{enumerate}

\end{lemma}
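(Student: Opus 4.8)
The plan is to lift $K$ explicitly through the Joukowski form of $B$ and then reduce every assertion to the $q_1$-axis crossing data already recorded in Remark \ref{rmk:useful}. First I would note the elementary identity $B\bigl(e^{\lambda+i\nu}\bigr)=(\cosh\lambda\cos\nu,\ \sinh\lambda\sin\nu)$, which says that, under the substitution $z=e^{\lambda+i\nu}$, the map $B$ intertwines the polar picture in the $z$-plane with the elliptic coordinates $(\lambda,\nu)$ in the $q$-plane, and that the deck involution $\phi(z)=1/z$ corresponds to $(\lambda,\nu)\mapsto(-\lambda,-\nu)$. Hence a parametrization $\gamma(t)=(\lambda(t),\nu(t),p_\lambda(t),p_\nu(t))$ of $K$ lifts to $\tilde\gamma(t)=e^{\lambda(t)+i\nu(t)}$, and $B^{-1}(K)=\tilde\gamma\cup\phi(\tilde\gamma)$. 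Since $\phi$ is the nontrivial deck transformation of the branched double cover $B$, the preimage $\tilde K=B^{-1}(K)$ is automatically $\phi$-invariant, which is the last clause of (1).

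For the remaining content of (1), I would locate the branch behaviour. A collision with $E$ occurs exactly at $(\lambda,\nu)=(0,\pm\pi)$ and with $M$ at $(0,0)$; these lift to the ramification points $z=-1$ and $z=+1$, respectively. By Lemma \ref{lem:Lregion} the orbit collides only with $E$ when $l$ is even and with both primaries when $l$ is odd, so $\tilde K$ meets $\{-1\}$ (resp.\ $\{\pm1\}$) accordingly, giving the statement about the points $(\pm1,0)$. Connectivity of $\tilde K$ then follows because the two sheets $\tilde\gamma$ and $\phi(\tilde\gamma)$ are glued precisely at those ramification points through which $K$ passes: the local retracing (inverse self-tangency) of the collision orbit at $E$ and $M$ in the Stark system is resolved by $B$ into a smooth transverse passage of the lift through $z=\pm1$, so the two sheets are joined there and $\tilde K$ is a single connected arc as claimed.

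The key reduction for (2) and (3) is to transport the count to the $q$-plane. By Lemma \ref{lem:propofB}(1) the map $B$ restricts to an orientation-preserving bijection of the arc $(1,\infty)\times\{0\}$ in the $z$-plane onto the ray $(1,\infty)\times\{0\}$ in the $q$-plane, i.e.\ the part of the $q_1$-axis strictly beyond $M$; being a local diffeomorphism there, it carries single points to single points and double points to double points. Thus $\tilde K\cap\bigl((1,\infty)\times\{0\}\bigr)$ is in bijection with $K\cap\bigl((1,\infty)\times\{0\}\bigr)$, respecting the single/double classification. A point of $K$ lies on this ray iff $q_2=\sinh\lambda\sin\nu=0$ and $q_1=\cosh\lambda\cos\nu>1$, that is iff $\nu\equiv0\pmod{2\pi}$ and $\lambda\neq0$; so it suffices to read off the crossings of $K$ with the positive $q_1$-axis lying beyond $M$. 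These are exactly the intersections catalogued in Remark \ref{rmk:useful}. For $l$ odd, Fact(1o) lists the $q_1$-axis intersections as $\tfrac{l-1}{2}$ single points, then $E$, then $k-1$ single points, then $M$, then $\tfrac{l-1}{2}$ single points; the block lying beyond $M$ is $\tfrac{l-1}{2}$ single points, which after transport by $B$ yields assertion (3), while $E$ and $M$ being single points of $K$ forces $(\pm1,0)$ to be single points of $\tilde K$. For $l$ even, Fact(1e) gives the ordering depending on $l\bmod 4$; the block beyond $M$ is $l/4$ double points when $l\in4\N$, and $(l-2)/4$ double points together with the single point $P$ when $l\in4\N_0+2$ (recall from the proof of Proposition \ref{prop:quadruple} that for $l\in4\N_0+2$ the point $P$ sits at $\nu=0$, $\lambda=\pm\lambda_{\max}$, hence on the ray beyond $M$, whereas for $l\in4\N$ it sits at $\nu=-\pi$ and lies beyond $E$). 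Transporting by $B$ then gives assertion (2).

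The hard part will be assertion (1): making the branched-covering bookkeeping at $z=\pm1$ rigorous. Concretely, I must verify that the two lifts $\tilde\gamma$ and $\phi(\tilde\gamma)$ do meet at, and only at, the ramification points dictated by the parity of $l$, that the collision (inverse self-tangency) is resolved into a genuine smooth passage rather than creating spurious components, and that the double-cover monodromy — determined through a perturbation by $w_E(K)+w_M(K)\bmod 2$ as in \cite[Proposition 4.2]{CFZ23} — is compatible with the gluing so that the union is a single connected $\phi$-invariant arc. Once (1) is settled, parts (2) and (3) are pure bookkeeping built from Lemma \ref{lem:propofB} and the Facts of Remark \ref{rmk:useful}.
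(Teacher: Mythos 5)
Your proposal is correct and takes essentially the same route as the paper: part (1) is the paper's gluing-at-the-ramification-point argument (the paper cites the proof of Lemma \ref{lem:simgleLE} with the $\pi$-rotation replaced by $\phi(z)=1/z$, which your explicit lift $z=e^{\lambda+i\nu}$ merely makes concrete), and parts (2)--(3) are obtained, exactly as in the paper, by transporting the $q_1$-axis crossing data of Fact(1e)/Fact(1o) in Remark \ref{rmk:useful} through the bijection of Lemma \ref{lem:propofB}(1). The verification deferred in your final paragraph is already settled by your own setup: $\tilde\gamma$ and $\phi(\tilde\gamma)$ both pass through a common ramification point, so their union is connected irrespective of monodromy or of where else they may intersect.
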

\begin{proof}
  Since the map $B$ fixes two points $(\pm 1,0)$, it is immediate that each component of $B^{-1}(K)$ must start and end at either  $(-1,0)$ or $(1,0)$. The rest of the first assertion follows from a similar argument as in  Lemma \ref{lem:simgleLE}, replacing the $\pi$-rotation with the symmetry under the map $\phi$. This proves the first assertion.
  The second and third assertions follow from Lemma \ref{lem:propofB}(1), together with   Fact(1e) and Fact(1o) in   Remark \ref{rmk:useful}. This completes the proof of the lemma.
\end{proof}

 We are in a position to determine the quantity $n(K)$.

\begin{proposition}\label{prop:leavf} For any $T_{k,l}$-type orbit $K$, we have   $n(K)=l$.
\end{proposition}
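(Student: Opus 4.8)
The plan is to compute $n(K)=|w_0(\tilde K)|$ directly, by reading the winding number around the origin as a signed count of intersections of $\tilde K$ with the positive real ray $(0,\infty)\times\{0\}$, and then evaluating that count from the structural description of $\tilde K$ already obtained. The starting point is to make explicit the relation between the Birkhoff map \eqref{eq:Birkhoff} and the elliptic coordinates \eqref{eq:ellipticco}: writing $z=e^{\lambda+i\nu}$ and expanding $B(z)=\tfrac12(z+1/z)$ gives $B(z)=\cosh\lambda\cos\nu+i\sinh\lambda\sin\nu$, which is exactly $q_1+iq_2$. Hence the Birkhoff lift of the orbit is $z(t)=e^{\lambda(t)+i\nu(t)}$, so that $\arg z=\nu$ and $\log|z|=\lambda$ along $\tilde K$, and therefore $w_0(\tilde K)=\tfrac{1}{2\pi}\oint_{\tilde K}d(\arg z)=\tfrac{1}{2\pi}\oint_{\tilde K}d\nu$.

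The decisive observation is a sign statement. By Lemma~\ref{lem:phaseportrait}(2) the momentum $p_\nu$ has constant sign along a lemniscate motion, so $\nu$ is strictly monotone; consequently $\arg z$ is strictly monotone along $\tilde K$, and every transverse intersection of $\tilde K$ with a ray emanating from the origin is crossed in the same direction. It follows that $n(K)=|w_0(\tilde K)|$ equals the \emph{total} number of intersections of $\tilde K$ with $(0,\infty)\times\{0\}$, with no cancellation.

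It then remains to count these intersections, and here I would exploit the $\phi$-invariance of $\tilde K$ from Lemma~\ref{lem:Bcoll}(1). Since $\phi(z)=1/z$ interchanges the arcs $(0,1)\times\{0\}$ and $(1,\infty)\times\{0\}$ while fixing $z=1$, the intersections with $(0,1)$ and with $(1,\infty)$ occur in equal numbers; the number on $(1,\infty)$ is supplied by Lemma~\ref{lem:Bcoll}(2)--(3), each transverse double point contributing two same-signed crossings and each single point contributing one. For $l$ odd the point $z=1$, being the regularized lift of the collision at $M$, adds one further crossing. Adding the contributions in the three cases $l\in4\N$, $l\in4\N_0+2$, and $l$ odd yields, respectively, $l/2+l/2$, $l/2+l/2$, and $(l-1)/2+(l-1)/2+1$ crossings, i.e.\ exactly $l$ in every case, whence $n(K)=l$.

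The step I expect to demand the most care is the orientation bookkeeping near the branch points and across the chosen connected component $\tilde K$. For even $l$ the spatial orbit is a double cover of its image, so one must verify that passing to a single component of $B^{-1}(K)$ neither loses nor doubles the $\nu$-progression, and that each double point on $(1,\infty)$ genuinely contributes two crossings of the \emph{same} orientation rather than a cancelling pair. Both points are forced by the strict monotonicity of $\nu$ together with the $z\mapsto 1/z$ symmetry, which pin the crossing count to $l$ independently of these ambiguities.
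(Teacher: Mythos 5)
Your proof is correct, and at the decisive step it takes a genuinely different route from the paper. Both arguments express $n(K)=\lvert w_0(\tilde K)\rvert$ as a count of same-signed crossings of the positive real ray, and both extract the raw count $l$ from Lemma \ref{lem:Bcoll}(2)--(3) combined with the $\phi$-symmetry; the difference lies in how the uniformity of signs is established. The paper works downstairs in the $q$-plane: it compares $\phi$-paired crossings by differentiating $\mathrm{Im}\,\phi(z(t))$, shows that crossings of $(1,\infty)\times\{0\}$ agree in sign via the relation $\dot q_2=\sinh\lambda\cdot\dot\nu$ at $\nu=0$, a parity count of crossings of $(0,1)\times\{0\}$, and the sign dichotomy \eqref{eq:q2y}, and for $l$ odd it still needs a separate deformation argument inside the $T_{k,l}$-torus to fix the sign of the crossing at the branch point $(1,0)$. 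You work upstairs: the identity $B(e^{\lambda+i\nu})=\cosh\lambda\cos\nu+i\sinh\lambda\sin\nu$ exhibits the exponential map as a conjugation between $B$ and the elliptic-coordinate projection \eqref{eq:ellipticco}, so along the lift $\arg z$ coincides with $\pm\nu$ up to a constant, and strict monotonicity of $\nu$ (Lemma \ref{lem:phaseportrait}(2)) forces every ray crossing---including the one at the fixed point $z=1$ when $l$ is odd---to carry the same orientation, with no case analysis and no auxiliary homotopy argument. This is both shorter and stronger: the one point genuinely requiring care is the one you flag, namely that one period of the orbit lifts to a single traversal of $\tilde K$ rather than a double one; this follows because collision orbits are invariant under the phase-space involution $(\lambda,\nu,p_\lambda,p_\nu)\mapsto(-\lambda,-\nu,-p_\lambda,-p_\nu)$, which is exactly the lift of $\phi$, so the two exponential lifts coincide as sets. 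Once that is in place, the monotone angle advancing by $2\pi l$ per traversal even gives $w_0(\tilde K)=\pm l$ outright, making your intersection count a consistency check rather than a necessity, and the same observation trivializes the paper's subsequent lemma that $\tilde{\gamma}$ rotates in the same direction at every point. What the paper's longer route buys is only that its bookkeeping never leaves the physical plane.
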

\begin{proof} As before, we may assume that $K$ is a collision-collision orbit. Set $\tilde K=B^{-1}(K)$ as in Lemma \ref{lem:Bcoll}(1).  By definition, $n(K)$ is the absolute value of the winding number $w_0(\tilde K)$, which in our setting equals the signed count of the intersection points of $\tilde K$  with the arc $(0,\infty)\times\{0\}. $

%(here we might have a sign issues but it does not matter since anyway we are going to take absolute value in the end) 

% Recall that the arcs $(0, 1 ) \times \{ 0 \}$ and $(1, \infty) \times \{ 0 \}$ are switched under the map $\phi. $
Let $z(t)=( x(t), y(t))$ be a parametrization of $\tilde K$ and suppose that $\dot y(t)>0$ at some point $(x(t),0)$, where $x(t)>1$. We evaluate the image under the map $\phi(z) = 1/z$, see Lemma \ref{lem:Bcoll}(1). Then 
 \begin{align*}
\frac{d}{dt} {\rm Im} \phi (z(t)) = \frac{d}{dt}  \frac{ -y(t)}{x(t)^2 + y(t)^2}     = - \frac{y'(t)}{x(t)^2} <0.
 \end{align*}
Since $\phi$ reverses orientation, this implies that the corresponding intersection points on $(0,1) \times \{0\}$ and $(1, \infty) \times \{ 0 \}$ that are related via the $\phi$-symmetry contribute the same sign to the winding number.  
 The case $\dot{y}(t)<0$  is analogous. Thus, it suffices to consider only the intersections of $\tilde K$ with the arc $[1, \infty) \times \{ 0 \}$.

We first claim that every intersection point of $\tilde K$ with $(1, \infty) \times \{ 0 \}$  contributes  the same sign to the winding number.  To prove this, let $(q_1(t),q_2(t))$ be a parametrization of $K$ and let $(q_1(t_0), 0), (q_1(t_1), 0)$  be two consecutive intersection points of $K$ with $(1,\infty) \times \{0\}$. That is,   there exists no $t _2\in (t_0, t_1)$ such that $K$ intersects $(1,\infty) \times \{0\}$ at time $t=t_2$.
Recall that in the elliptic coordinates $(\lambda, \nu)$  the arc $(1,\infty) \times \{0\}$ corresponds to the coordinate line $\nu=0$. Since $p_\nu = \dot{\nu}$ is nonvanishing along lemniscate motions, see Lemma \ref{lem:phaseportrait}(1), evaluating the time derivative of $q_2 = \sinh \lambda \sin \nu$ at $\nu=0$ gives rise to 
 %Differentiating $q_2 =  \sinh \lambda\sin \nu$ with respect $t$ and plugging $\nu=0$, we obtain that
$\dot{q}_2  =\sinh \lambda \cdot \dot{\nu}$, so that $\dot{q}_2(t_0)$ and $\dot{q}_2(t_1)$ have opposite (resp. same) signs if and only if
the part of $K$ over the interval $  (t_0, t_1)$   intersects the coordinate line $\lambda=0$, that is, the arc $(0,1) \times \{ 0 \}$,  an odd (resp. even) number of times.
Moreover, along the arc $(1,\infty) \times \{ 0 \}$ we have
\[
 \dot{q}_2 (t) = \frac{ \dot{y}(t)}{2} \left( 1 - \frac{1}{x (t)^2} \right),
\]
from which it follows that 
\begin{equation}\label{eq:q2y}
{\text{$\dot{y}(t)$ and $\dot{q}_2 (t)$ have } } \begin{cases} {\text{ same sign }} \\ {\text{ opposite sign }} \end{cases}  { \text{ if and only if }} \;\; \begin{cases} x>1 \\ 0<x<1 .\end{cases}
\end{equation}

We now write $B^{-1}( q_1(t_0), 0)=\{ a_1, a_2\}$ and $B^{-1}(q_1(t_1), 0) = \{ b_1, b_2\}$. By Lemma \ref{lem:propofB}(1) the four points $a_1, a_2, b_1, b_2$ lie on $\left( (0,\infty) \setminus \{ 1 \} \right) \times \{ 0 \}$. Recall that each pair $(a_1, a_2)$ and $(b_1, b_2)$ consists of intersection points of the same sign.   
Let $a_i, b_j$ be consecutive intersection points of $\tilde K$. They belong to the same connected component of $\left( (0,\infty) \setminus \{ 1 \} \right) \times \{ 0 \}$   if and only if $\tilde K$ intersects the unit circle an even number of times between $a_i$ and $b_j$. Due to Lemma \ref{lem:propofB}(2), this is equivalent to the condition that the part of $K$ over the interval $  \in (t_0, t_1)$  intersects the coordinate line $\lambda =0$ an even number of times, which, as shown earlier, is in turn equivalent to $\dot q_2(t_0)$ and $\dot q_2(t_1)$ having the same sign as intersection points. 
Therefore, by equation \eqref{eq:q2y} the two points $a_i, b_j$ have the same sign as the intersection points. 

Similarly, if $a_i$ and $b_j$ are in different components of $\left( (0,1) \cup ( 1, \infty) \right) \times \{0 \}$, then $\dot q_2(t_0)$ and $\dot q_2(t_1)$ have opposite signs, and again \eqref{eq:q2y} implies that $a_i, b_j$ still have the same sign.   

Consequently, all points $a_1 , a_2, b_1, b_2$   have the same sign, and hence all intersection points of $\tilde K$ with $\left( (0,1) \cup (1, \infty)\right)\times \{0 \}$ contribute  the same sign to the winding number.  This proves the claim.

Assume first that $l$ is even, so that $K$ collides twice with $E$.    Hence, $\tilde K$ does not pass the point $(1,0). $ In view of Lemma \ref{lem:Bcoll}(2),  we see that $\tilde K$ intersects the positive $x$-axis precisely $l$ times. By the preceding argument, all these intersection points contribute  the same sign. Therefore, we conclude that $w_0(\tilde K) = \pm l$ when $l$ is even.

Now consider the case where $l$ is odd, so that the point $(1,0)$ is a single point of $\tilde K$. Note that the argument above concerning the sign of intersection points applies to any $T_{k,l}$-type orbit, regardless of whether it is a collision-collision orbit or not. Since $K$ belongs to a $T_{k,l}$-torus, which is a smooth one-parameter family of $T_{k,l}$-type orbits, the intersection point $(1,0)$ must have the same sign as the other intersection points of $\tilde K$   with the arc $(1,\infty) \times \{ 0 \}$. Therefore, by Lemma \ref{lem:Bcoll}(3),  $\tilde K$ again intersects the positive $x$-axis precisely $l$ times, and all intersections contribute  the same sign. It follows that $w_0(\tilde K)= \pm l $ also in this case.

To conclude, we have $n(K) = \lvert w_0(\tilde K)\rvert = l $, which completes the proof of the proposition.
\end{proof}

It remains to determine the quantity $\mathcal{J}_{E,M}$. By definition, it suffices to determine $J^+(\widetilde{K})$. 
 Given a collision-collision orbit $K$, choose any parametrization $\gamma(t)$ of $K$. Then it gives rise to a parametrization $\tilde{\gamma}(t)$ of the preimage $\tilde K = B^{-1}(K)$.

\begin{lemma} 
The curve  $\tilde{\gamma}$  rotates in the same direction at every point. 
\end{lemma}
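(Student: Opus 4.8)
The plan is to recognize that, in the elliptic coordinates, the Birkhoff map is nothing but the hyperbolic cosine, after which the statement reduces directly to the phase-portrait analysis already in hand. I read the assertion as saying that $\tilde\gamma$ winds about the origin always in the same rotational sense, i.e.\ that the angular coordinate of the lift is strictly monotone. Normalizing the primaries to $E=(-1,0)$ and $M=(1,0)$, the elliptic coordinates \eqref{eq:ellipticco} give $q_1+iq_2=\cosh(\lambda+i\nu)$, whereas the regularization map \eqref{eq:Birkhoff} satisfies $B(e^{w})=\tfrac12(e^{w}+e^{-w})=\cosh w$ for $w=\lambda+i\nu$. Hence $B\big(e^{\lambda+i\nu}\big)=q_1+iq_2$, so the smooth lift
\[
z(t):=\exp\big(\lambda(t)+i\nu(t)\big)
\]
satisfies $B(z(t))=\gamma(t)$ and therefore parametrizes $\tilde K=B^{-1}(K)$; by Lemma~\ref{lem:Bcoll}(1) this exhausts the connected arc, the second lift $e^{-(\lambda+i\nu)}$ being its image under $\phi(z)=1/z$. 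The two facts I would immediately record are that $\arg z(t)=\nu(t)$ and $|z(t)|=e^{\lambda(t)}\ge e^{-\lambda_{\max}}>0$, so that $\tilde\gamma$ never meets the origin and its angular coordinate is well defined.

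With this identification the conclusion follows in one step. Since the argument of $z$ equals the elliptic angular variable $\nu$, the angular velocity of $\tilde\gamma$ about the origin is
\[
\frac{d}{dt}\arg z(t)=\dot\nu(t)=p_\nu(t),
\]
using the Hamilton equation $\dot\nu=\partial_{p_\nu}F_{\mu,c}=p_\nu$ for the decoupled Hamiltonian \eqref{eq:FofEuler}. By Lemma~\ref{lem:phaseportrait}(2) the momentum $p_\nu$ has a constant sign along any lemniscate motion, so $t\mapsto\arg z(t)$ is strictly monotone. Thus $\tilde\gamma$ rotates about the origin always in the same direction, which is exactly the claim.

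The only place that needs care is the passage through the collisions. At a collision the satellite sits at a focus, say $q=-1$, i.e.\ $(\lambda,\nu)=(0,\pi)$, so the lift reaches $z=-1$, a branch point of $B$ (collisions at $M$ give $z=+1$). I would verify that the lift remains a genuine immersion there: from $\dot z=(p_\lambda+ip_\nu)\,z$ and $p_\nu\neq0$ we get $\dot z\neq0$, so $z(t)$ crosses the unit circle $\{\lambda=0\}$ transversally while $\arg z=\nu$ keeps increasing, and no reversal of the rotational sense can occur at the branch points. I do not anticipate a serious obstacle here beyond this bookkeeping; the entire substance of the proof is the coordinate identification $B\circ\exp=\cosh$, after which monotonicity of the winding is an immediate consequence of Lemma~\ref{lem:phaseportrait}(2). (It is worth emphasizing that the statement concerns the rotation of $\tilde\gamma$ about the origin, not the turning of its tangent line: the latter is in general \emph{not} monotone, so the hyperbolic-cosine identification is precisely what isolates the quantity $\arg z=\nu$ whose monotonicity is already known.)
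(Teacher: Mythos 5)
Your proof is correct, but it follows a genuinely different route from the paper's. The paper argues combinatorially: by the sign analysis in the proof of Proposition \ref{prop:leavf} (all crossings of a ray contribute the same sign to the winding number, which is $\pm l$), it suffices to show that every ray from the origin meets $\tilde K$ in \emph{exactly} $l$ points; this count is then extracted from the proof of Proposition \ref{prop:quadruple} (each hyperbola $\nu=\bar\nu$ meets $K$ in exactly $l$ points) together with Lemma \ref{lem:propofB}(3) and the $2$-to-$1$ property of $B$, since the preimage of such a hyperbola is a pair of rays. You instead observe the exact conjugation $B(e^{w})=\cosh w$, which in the normalization $E=(-1,0)$, $M=(1,0)$ identifies the lift of $\gamma$ with $z(t)=e^{\lambda(t)+i\nu(t)}$, so that $\arg z=\nu$ on the nose; monotone rotation is then literally Lemma \ref{lem:phaseportrait}(2) via $\dot\nu=p_\nu\neq 0$ for the decoupled Hamiltonian \eqref{eq:FofEuler}. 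Your identification of the second sheet with the $\phi$-image $1/z$, and the nondegeneracy $\dot z=(p_\lambda+ip_\nu)z\neq 0$ at the branch points $\pm1$, take care of the only delicate points. What your approach buys: it is more elementary, it yields \emph{strict} monotonicity of the angle (ruling out tangencies with rays, which the paper's counting handles only implicitly), it applies to every lemniscate orbit rather than only to the collision-collision representatives, and it makes the circle-and-ray structure of the subsequent drawing algorithm transparent ($|z|=e^{\lambda}$, $\arg z=\nu$). What the paper's route buys: it recycles already-established counting lemmas and directly produces the statement actually consumed by the algorithm -- that each ray is crossed exactly $l$ times -- which in your argument is recovered as a corollary of monotonicity plus total angular variation $2\pi l$.
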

\begin{proof}  By the proof of Proposition \ref{prop:leavf}, it is enough to show that every ray emanating from the origin intersects $\tilde K$ precisely $l$ times.  
Recall from the proof of Proposition \ref{prop:quadruple}   that for any fixed value $\bar{\nu} \in (0, \pi)$, the corresponding hyperbola $\nu = \bar{\nu}$ intersects the orbit $K$  at exactly $l$ points. Since the map $B$ is a  $2$-to-$1$ branched covering, it follows that $\tilde K$ intersects the preimage of the hyperbola $\nu=\bar{\nu}$  exactly $2l$ times. According to Lemma \ref{lem:propofB}(3), the preimage of the hyperbola consists of two rays given by $\theta = \pm \bar{\theta}$ for some angle $\bar{\theta} \in \R$. Therefore,   each ray intersects $\tilde K$ exactly $l$ times, from which the lemma is proved. 
\end{proof}

The previous lemma enables us to provide an algorithm for drawing a loop homotopic to $\tilde K$ without encountering any disaster, following the approach of \cite[Section 3]{KKJplus}.  
For the readers' convenience, we include such an algorithm below for the case $l \in 4 \N$,   obtained by Fact(4e) in Remark \ref{rmk:useful}.  Let $K$ be a collision-collision orbit at $E$. 
\begin{enumerate}[(i)]
\item Choose real numbers $0< r_{-l/2} <  \cdots < r_{-1} < r_0=1 < r_1 < \cdots <   r_{l/2}$ and draw circles $C_{ j} $ centered at the origin with radius $r_j$. The two circles $C_{\pm l/2}$ correspond to the preimage of the ellipse $\lambda = \lambda_{\max}$ under the Birkhoff  map $B$.
Similarly, choose angular values $-\pi=\theta_0 < \theta_1 < \cdots <\theta_{k-1} < \theta_k = 0$ and draw rays emanating from the origin at angles $\theta = \pm \theta_i$.
\item On each circle $C_j$,   mark $k$ points according to the following rule: 

\begin{itemize}
    \item if  $j$ is odd, then mark the   intersections of $C_j$ with   rays at angles $\theta = \pm  \theta_1, \pm \theta_3, \ldots,   \theta_{k }$; 
 
  \item if $j$ is even, then mark  the intersections of $C_j$ with $\theta = \theta_0, \pm \theta_2, \ldots, \pm \theta_{k-1}$. 
\end{itemize}
Each marked point is denoted by $a_{i,j} =  C_i \cap \{ \theta =\theta_j \}$.  All points $a_{i,j}$ with $i   \neq \pm l/2$ correspond to   double points of $\tilde{K}$. Note that  $a_{0, 0} = (-1,0)$.

\item Starting from   $a_{-l/2, 0}$,    connect the points $a_{i,j}$ in the following order using straight line segments: 
\begin{align*}
& \qquad \qquad   a_{- { l}/{2} , 0}, \;  a_{- { l}/{2}+1, 1}, \; a_{-l/2+2,2}, \;  \ldots, \;    a_{  {l}/{2}, l  }, \; a_{{l}/{2}-1, l+1 }, \; \ldots, \; a_{- {l}/{2}, 2l} ,\\
& \qquad \qquad  a_{- {l}/{2}+1, 2l+1}, \;  \ldots, \; a_{l/2, 3l}, \; a_{l/2-1, 3l+1}, \;, \ldots, \; a_{-l/2, 4l}, \;   \ldots,     \\
& \qquad \qquad  a_{l/2, 5l}, \; \ldots,\; a_{-l/2, 6l},  \; \ldots, \; a_{l/2, (k-2)l}
  \;\ldots,  \;a_{- {l}/{2}, (k-1)l } = a_{- {l}/{2}, 0}.
\end{align*}
Here, all second sub-indices of $a_{i,j}$ are considered modulo $k$.

\item Finally, smooth out the corners of the resulting piecewise-linear arc to obtain  a smooth loop that is homotopic to   $\tilde{K}$ without encountering any disasters.
See Figure \ref{motionsd}. 

 \begin{figure}[h]
\begin{subfigure}{0.43\textwidth}
  \centering
  \includegraphics[width=0.9\linewidth]{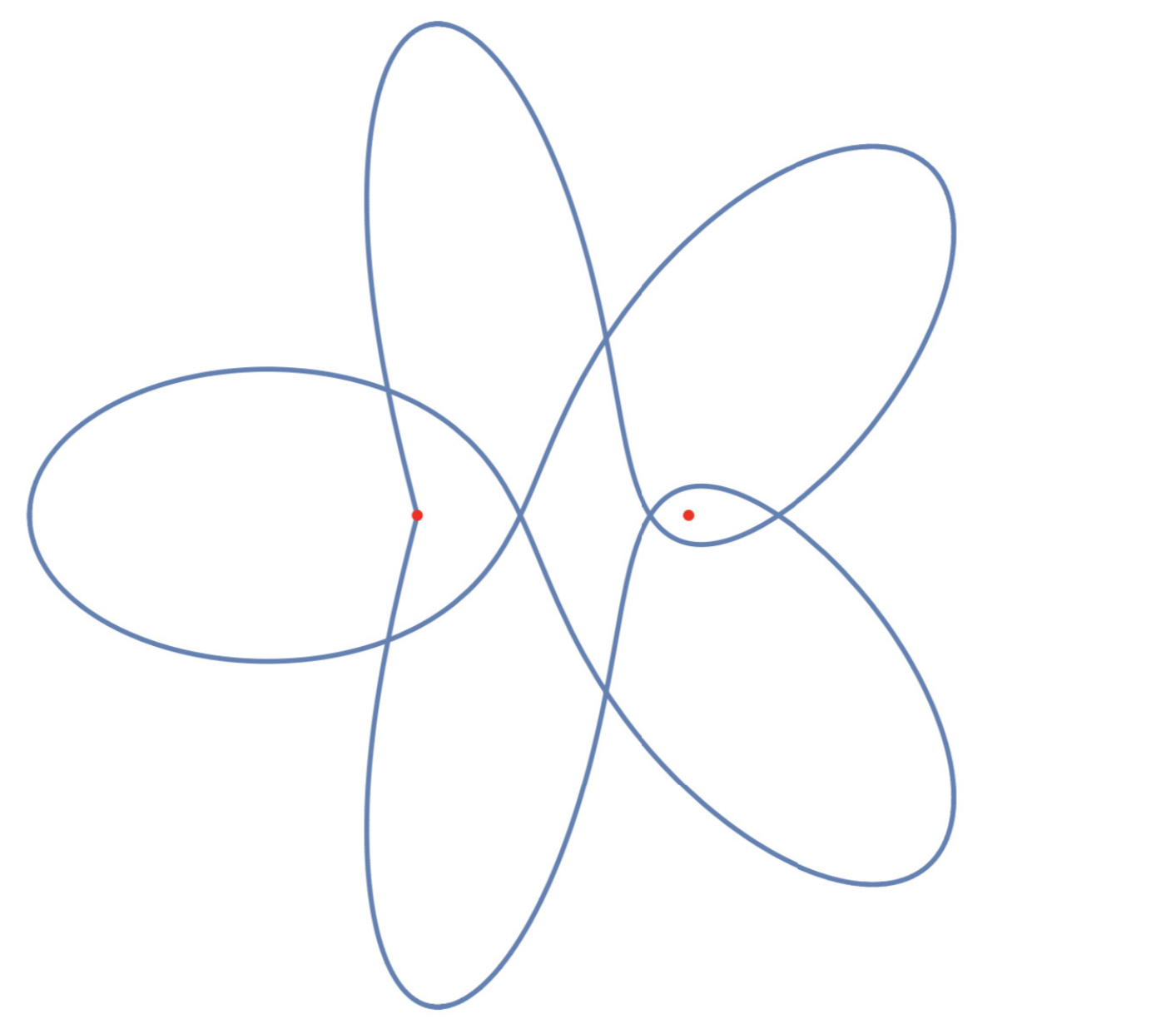}

\end{subfigure}
 \begin{subfigure}{0.43\textwidth}
 \begin{tikzpicture}[scale=0.7]
 
 \draw [fill, white] (1, 3.3) circle [radius=0.05];

 \begin{scope}[yscale=1, xscale=-1 ]

 \draw [thick] (-0.6,0) to ( -1.24,  0.86);
  \draw [thick] (-0.6,0) to ( -1.24, -0.86);
 \draw [thick] (-0.5,2.35) to ( -1.24,  0.86);
 \draw [thick] (-0.5,-2.35) to ( -1.24, -0.86);
 \draw [thick] (-0.5,2.35) to (1.3, 2.95);
 \draw [thick] (-0.5,-2.35) to (1.3, -2.95);
 \draw [thick]  (2.05, 1.25)to (1.3, 2.95);
 \draw [thick]  (2.05, -1.25)to (1.3, -2.95);
 \draw [thick]  (1.5,0)   to (2.05, 1.25);
 \draw [thick]  (1.5,0)   to (2.05, -1.25);
  \draw [thick]  (3.2,0)   to (2.05, -1.25);
  \draw [thick]  (3.2,0)   to (2.05, 1.25);
 \draw [thick]    ( -2.6, -1.85) to (-0.5,-2.35);
 \draw [thick]  ( -2.6, 1.85) to (-0.5, 2.35);
 \draw [thick]  (-2.4, 0) to ( -2.6, -1.85);
 \draw [thick]  (-2.4, 0) to( -2.6, 1.85);
\draw[thick] ( -1.24,  0.86) to (-2.4, 0);
 \draw[thick] ( -1.24, -0.86) to (-2.4, 0);
 \draw[thick] ( -1.24, -0.86) to (-0.12, -0.6);
 \draw[thick] ( -1.24,  0.86)to (-0.12,  0.6);
 \draw[thick] (  0.62, 1.36  ) to (-0.12,  0.6);
 \draw[thick] (  0.62, -1.36  ) to (-0.12, - 0.6);
 \draw[thick] (   0.52, -0.32   ) to ( 0.62, -1.36);
 \draw[thick] (   0.52,  0.32   ) to ( 0.62,  1.36);
 \draw[thick] (    1.5, 0 ) to ( 0.52,  0.32);
 \draw[thick] (   1.5, 0 ) to ( 0.52,  -0.32);
\draw[thick] ( -0.6, 0) to (-0.12, 0.6);
 \draw[thick] ( -0.6, 0) to (-0.12,-0.6);
\draw[thick] ( 0.52, 0.32) to (-0.12, 0.6);
\draw[thick] ( 0.52, -0.32) to (-0.12, -0.6);
\draw[thick] ( 0.52, 0.32) to (0.52, -0.32);
\draw[thick]  (-0.5, 2.35) to (0.62, 1.36);
 \draw[thick]  (-0.5, -2.35) to (0.62, -1.36);
\draw[thick]  (  2.05, 1.25) to (0.62, 1.36);
\draw[thick]  (  2.05, -1.25) to (0.62, -1.36);

  \draw [gray,thin] (0,0) circle [radius=1.5];
    \draw[gray,thin] (0,0) circle [radius=0.6];
  \draw [gray,thin](0,0) circle [radius=2.4];
  \draw [gray, thin](0,0) circle [radius=3.2];

\draw[gray,thin] (0,0) to (3.2,0);
\draw[gray,thin] (0,0) to (-3.2,0);
\draw[gray,thin] (0,0) to  ( 1.3, 2.95);
\draw[gray,thin] (0,0) to  ( 1.3, -2.95);
\draw [gray,thin](0,0) to ( -2.6, 1.85);
\draw[gray,thin] (0,0) to ( -2.6, -1.85);
\draw[gray,thin] (0,0) to (-0.67, 3.15);
\draw[gray,thin] (0,0) to (-0.67, -3.15);
\draw[gray,thin] (0,0) to (2.75, 1.67);
\draw[gray,thin] (0,0) to (2.75, -1.67);

 \draw [fill] (-2.4,0) circle [radius=0.05];
\draw [fill] (3.2,0) circle [radius=0.05];
\draw [fill] (1.5,0) circle [radius=0.05];
\draw [fill] (-0.6,0) circle [radius=0.05];
\draw [fill] (-0.5, -2.35) circle [radius=0.05];
\draw [fill] (-0.5,  2.35) circle [radius=0.05];
\draw [fill] (-0.12, -0.6) circle [radius=0.05];
\draw [fill] (-0.12, 0.6) circle [radius=0.05];

\draw [fill] (1.3, -2.95) circle [radius=0.05];
\draw [fill] (1.3, 2.95) circle [radius=0.05];
\draw [fill]  ( -2.6, -1.85) circle [radius=0.05];
\draw [fill]  ( -2.6, 1.85) circle [radius=0.05];
\draw [fill]  ( -1.24, 0.86) circle [radius=0.05];
\draw [fill]  ( -1.24, -0.86) circle [radius=0.05];
\draw [fill] (0.62,1.36) circle [radius=0.05];
\draw [fill] (0.62,-1.36) circle [radius=0.05];
\draw[fill]  (2.05, 1.25) circle [radius =0.05];
\draw[fill]  (2.05, -1.25) circle [radius =0.05];
\draw[fill]  (0.52, 0.32) circle [radius =0.05];
\draw[fill]  (0.52, -0.32) circle [radius =0.05];
\end{scope}
\end{tikzpicture}

\end{subfigure}
 \caption{A $T_{5,4}$-type collision-collision orbit $K$ (left) and its preimage $B^{-1}(K)$ before smoothing (right)}
\label{motionsd}
\end{figure}

\end{enumerate}
\medskip
 
 We now apply Viro's formula   as in \cite{KKJplus} and obtain the following, which   completes the proof of the theorem \ref{thmmain}.

 \begin{proposition} Every $T_{k,l}$-type orbit in the L-region satisfies
   \[
 \mathcal{J}_{E,M} (K) =J^+(\widetilde{K}) = 1 - k+ kl - l^2 \text{  modulo } 2n(K) = 2l.
\]  
 \end{proposition}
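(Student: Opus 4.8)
The plan is to apply Viro's formula to the explicit model of $\widetilde K=B^{-1}(K)$ furnished by the drawing algorithm (i)--(iv), exactly in the spirit of \cite{KKJplus}. By the preceding lemma, $\widetilde K$ rotates monotonically about the origin, so the piecewise-linear loop produced by steps (i)--(iii) is homotopic to $\widetilde K$ through a generic homotopy that avoids direct self-tangencies; hence $J^+$ is unchanged and it suffices to evaluate it on this model. I would assume throughout that $K$ is a collision-collision orbit (Lemma \ref{lem:Lregion}) and, for concreteness, carry out the count for $l\in 4\N$, the cases $l\in 4\N_0+2$ and $l$ odd being analogous after the cosmetic changes recorded in Remark \ref{rmk:useful}.

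Since $J^+(\widetilde K)=1+\#D_{\widetilde K}-\sum_{C}w_C(\widetilde K)^2+\sum_{p}\mathrm{ind}_p(\widetilde K)^2$, the three ingredients are obtained as follows. First, by step (ii) of the algorithm together with Fact(4e), the double points of $\widetilde K$ are precisely the grid points $a_{i,j}$ lying on the inner circles $C_i$ with $|i|<l/2$; there are $l-1$ such circles, each carrying $k$ marked points, whence $\#D_{\widetilde K}=k(l-1)=kl-k$. Second, the winding numbers are pinned down by the monotone rotation: as in the proof of Proposition \ref{prop:leavf}, every crossing of a fixed radial ray with $\widetilde K$ carries the same sign, and each such ray meets $\widetilde K$ in exactly $l$ points (the preceding lemma). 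Consequently, along any outward radial direction the winding number drops by one at each strand, running through $l,l-1,\dots,1,0$, the innermost region realizing $|w_0(\widetilde K)|=n(K)=l$ (Proposition \ref{prop:leavf}). Thus $w_C$ is determined by the radial position of $C$ in the annular grid, and $\sum_C w_C^2$ reduces to counting, annulus by annulus, how many complementary cells carry each winding value.

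For the double-point term I would use that the four local regions meeting at a grid point $a_{i,j}$ have winding numbers differing by $\pm1$ according to the radial slicing rule above, so that $\mathrm{ind}_p$ --- the arithmetic mean of the four adjacent winding numbers --- depends only on the circle index $i$; squaring and summing over the $k(l-1)$ double points then gives $\sum_p\mathrm{ind}_p^2$ in closed form. The heart of the argument, and its only genuinely laborious part, is the bookkeeping in these last two sums: once the regions of $\C\setminus\widetilde K$ and the double points are organized by circle index (using Fact(2e)--Fact(4e) to track which rays are hit on each $C_i$), a direct but careful enumeration yields the identity
\[
\sum_{C}w_C(\widetilde K)^2-\sum_{p}\mathrm{ind}_p(\widetilde K)^2=l^2 .
\]

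I expect this enumeration --- and checking that it is uniform across the parity regimes of $l$ --- to be the main obstacle, precisely because the cell adjacencies depend on the alternation between even- and odd-indexed circles described in Fact(4e), so the region counts and the $\mathrm{ind}_p$ values must be tabulated separately and then shown to combine into the same clean difference $l^2$. Granting the displayed identity, Viro's formula gives
\[
J^+(\widetilde K)=1+(kl-k)-l^2=1-k+kl-l^2,
\]
and since $n(K)=l>0$ the invariant $\mathcal{J}_{E,M}(K)$ is this value reduced modulo $2n(K)=2l$, as claimed. The two worked instances $T_{3,2}$ and $T_{2,3}$ in the Example, where $\sum_C w_C^2-\sum_p\mathrm{ind}_p^2$ equals $4$ and $9$ respectively, serve as consistency checks on the enumeration.
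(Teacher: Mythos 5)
Your proposal follows the paper's own route: after the monotone-rotation lemma and the drawing algorithm (i)--(iv), the paper's proof consists precisely of ``applying Viro's formula as in \cite{KKJplus}'', which is exactly the bookkeeping you set up, and your intermediate counts --- $\#D_{\widetilde K}=kl-k$ double points, winding numbers decreasing monotonically from $l$ to $0$ along any outward ray, and the residual identity $\sum_C w_C(\widetilde K)^2-\sum_p \mathrm{ind}_p(\widetilde K)^2=l^2$ --- are all correct and consistent with the paper's worked $T_{3,2}$ and $T_{2,3}$ examples. The one step you leave as ``granted'' (the closed-form enumeration yielding $l^2$) is likewise not spelled out in the paper, which defers it to the analogous computation in \cite{KKJplus} and, in a subsequent remark, records an independent confirmation via the bifurcation formula of \cite{MaiAl2022}, giving $J^+(\widetilde K)=1-l^2+(kl-k)$ modulo $2l$ from the double-point count alone.
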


\begin{remark}
Using the fact that families of periodic orbits in the rotating Kepler problem arises as  bifurcations of $S^1$, one readily obtains that the ${\mathcal{J}}_{E,M}$-invariant of a $T_{k,l}$-type orbit in the Euler problem  coincides with  the $J^+$-invariant of a retrograde $T_{k,l-k}$-type orbit in the rotating Kepler problem, provided  $k<l,$ or a direct $T_{k,k-l}$-type orbit, provided   $k>l.$ For explicit formulas,  see   \cite[Section 4]{KKJplus}.
\end{remark}

 \begin{remark}

 Alternatively, the invariant $\mathcal{J}_{E,M} $ can be computed using \cite[Theorem C]{MaiAl2022},  which states  that the $J^+$-invariant of  $\tilde K$, which bifurcates from the $k$-fold covering of a generic immersion $K$, satisfies
\begin{equation*}
    J^+(\tilde{K})= k^2 J^+(K) - (k^2-1) + n_{\tilde{K} } - k^2 n_K  ,
\end{equation*}
where $n_K$ and $n_{\tilde{K}}$ indicate the number of double points along $K$ and $\tilde{K}$, respectively.

To apply this to our setting, suppose that $K$ is a collision-collision orbit on a $T_{k,l}$-torus. Then $B^{-1}(K)$ can be thought of as a closed curve, bifurcating from $ l$-fold covering of the circle $S^1$, and we obtain 
\begin{align*}
\mathcal{J}_{E,M}(K) &= J^+(B^{-1}(K)) \\
&= l^2 J^+(S^1) - (l^2-1) + n_{B^{-1}(K)} -l^2  n_{S^1}  \\
&= 1 - l^2 + n_{B^{-1}(K)} \quad {(\text{mod $2l$})}   ,
\end{align*}
where we have used the facts that $J^+(S^1)=0$ and $n_{S^1}=0$.
Recall from Lemma \ref{lem:Lregion} that if $l$ is odd, then $K$ collides with both $E$ and $M$.  Since the Birkhoff regularization map $B$ is a branched $2$-to-$1$ covering, it follows from Proposition \ref{prop:quadruple} that $B^{-1}(K)$ has $k l-k$ double points. In the case where $l$ is even, $K$ collides only with one primary, say $E$,   and has $( kl - k -1)/2$ double points. Then, $B^{-1}(K)$ has $kl-k = (kl-k-1)+1$ double points, where the additional $+1$ accounts for the fact that the point $(-1,0) = B^{-1}(E)$ becomes a double point. In both cases,   we conclude that  $n_{B^{-1}(K)} = k l-k, $ and thus
\[
\mathcal{J}_{E,M}(K) =  1 - l^2 + kl -k \quad {(\text{mod $2l$})}.   
 \]

 \end{remark}

%%%%%%%%%%%%%%%%%%%%%%%%%%%%%%%%%%%%%%%%%%%%%%%%%%%%%%%%%%%%%%%%%%%%%%%%%%%
%%%%%%%%%%%%%%%%%%%%%%%%%%%%%%%%%%%%%%%%%%%%%%%%%%%%%%%%%%%%%%%%%%%%%%%%%%%
%%%%%%%%%%%%%%%%%%%%%%%%%%%%%%%%%%%%%%%%%%%%%%%%%%%%%%%%%%%%%%%%%%%%%%%%%%%
%%%%%%%%%%%%%%%%%%%%%%%%%%%%%%%%%%%%%%%%%%%%%%%%%%%%%%%%%%%%%%%%%%%%%%%%%%%
%%%%%%%%%%%%%%%%%%%%%%%%%%%%%%%%%%%%%%%%%%%%%%%%%%%%%%%%%%%%%%%%%%%%%%%%%%%

%\section*{References}
\bibliographystyle{abbrv}
\bibliography{mybibfile}

\end{document}